\def\be{\begin{equation}}
\def\ee{\end{equation}}
\def\s0{s_0}
\def\p0{p_0}
\let\phi\varphi
\let\epsilon\varepsilon
\newtheorem{defn}{Definition}[section]
\newtheorem{thm}[defn]{Theorem}
\newtheorem{prop}[defn]{Proposition}
\newtheorem{cor}[defn]{Corollary}
\newtheorem{lem}[defn]{Lemma}
\newtheorem{rem}[defn]{Remark}
\theoremstyle{definition}
\newtheorem{eg}[defn]{Example}
\numberwithin{equation}{section}
\DeclareMathOperator\hol{Hol}
\newcommand{\T}{\mathbb{T}}
\newcommand{\D}{\mathbb{D}}
\newcommand{\C}{\mathbb{C}}
\newcommand{\mat}{\mathbb{C}^{2\times 2}}
\newcommand{\tr}{\operatorname{tr}}
\newcommand\la{\lambda}
\newcommand\spa{\mathrm {span~}}
\newcommand\id{\rm id}
\newcommand\beq{\begin{equation}}
\newcommand\eeq{\end{equation}}
\newcommand\bbm{\begin{bmatrix}}
\newcommand\ebm{\end{bmatrix}}
\newcommand\bpm{\begin{pmatrix}}
\newcommand\epm{\end{pmatrix}}
\let\phi\varphi
\numberwithin{equation}{section}
\begin{document}
\title[A Schwarz lemma for the pentablock]{A Schwarz lemma for the pentablock}
\author{Nujood M. Alshehri and Zinaida A. Lykova}
\date{submitted on 23rd June 2022, accepted on  29th September 2022}

\begin{abstract} In this paper we prove a Schwarz lemma for the pentablock.
The pentablock $\mathcal{P}$ is defined by
\[
\mathcal{P}=\{(a_{21}, \tr A, \det A) : A=[a_{ij}]_{i,j=1}^2 \in \mathbb{B}^{2\times 2}\}
\]
where $\mathbb{B}^{2\times 2}$ denotes the open unit ball in the space of $2\times 2$ complex matrices. 
The pentablock is a bounded nonconvex domain in $\Bbb{C}^3$ which arises naturally in connection with a certain problem of $\mu$-synthesis.
 We  develop a concrete  structure theory for the rational maps from the unit disc $\D$  to the closed pentablock $\overline{\mathcal{P}}$ that map the
unit circle  $\T$ to the distinguished boundary $b\overline{\mathcal{P}}$ of $\overline{\mathcal{P}}$. Such maps are called rational 
${\overline{\mathcal{P}}}$-inner functions.
 We give relations between ${\overline{\mathcal{P}}}$-inner functions and inner functions from $\Bbb{D}$ to the symmetrized bidisc.
We describe the construction of rational ${\overline{\mathcal{P}}}$-inner functions $x = (a, s, p) : \Bbb{D} \rightarrow \overline{\mathcal{P}}$ of prescribed degree from the zeroes of $a, s$ and $s^2-4p$. 
 The proof of this theorem is constructive: it gives an algorithm for the construction of a family of such functions $x$ subject to the computation of  Fej\'er-Riesz factorizations of certain non-negative trigonometric functions on the circle. 
 We use properties and the construction of rational ${\overline{\mathcal{P}}}$-inner functions to prove
  a Schwarz lemma for the pentablock.
\end{abstract}

\subjclass[2010]{Primary  32F45, 30E05, 93B36, 93B50}



\keywords{Inner functions, Pentablock, Schwarz lemma, Distinguished boundary}

\maketitle
\tableofcontents

\section{Introduction} \label{intro}

An unsolved problem in $H^\infty$ control theory led us to consider inner rational mappings from $\Bbb{D}$ to certain domains in $\Bbb{C}^d$ which  arise  in connection with the $\mu$-synthesis problem.
One such domain is the pentablock. Other well known examples of such domains are the symmetrized bidisc $\Gamma$ and the tetrablock. We should  mention papers on the construction of rational $\Gamma$-inner functions \cite{ALY18,ALY17} and rational tetra-inner functions \cite{AlsLyk, HAlsLyk} for the symmetrized bidisc $\Gamma$ and the tetrablock respectively.
The pentablock $\mathcal{P}$ was introduced by Agler, Lykova and Young in \cite{ALY2015} in 2015. It was shown there that $\mathcal{P}$ arises naturally in the context of  $\mu$-synthesis. 
\begin{defn}\label{Penta}
\textup{\cite{ALY2015}} The open pentablock is the domain defined by
\begin{equation}\label{pentablock}
\mathcal{P}=\{(a_{21}, \tr A, \det A) : A=[a_{ij}]_{i,j=1}^2 \in \mathbb{B}^{2\times 2}\}
\end{equation}
where $\mathbb{B}^{2\times 2}$ denotes the open unit ball in the space of $\;\;2\times 2$ complex matrices with respect to the operator norm arising from the standard inner product on $\Bbb{C}^2$.
\end{defn} 
Recall \cite{Do} that the {\em structured singular value} $\mu_E$ of $A\in\C^{m\times n}$ corresponding to subspace $E$ of $\C^{n\times m}$ is defined by
\beq\label{defmu}
\frac{1}{\mu_E(A)} = \inf \{\|X\|: X\in E \mbox{ and } \det (1-AX)=0\}.
\eeq
The cost function $\mu_E$ plays a central role in the ``$H^\infty$ approach" to the problem of stabilising a linear system in a way that is maximally robust with respect to structured uncertainty.
This approach, developed and promoted by J. Doyle and G. Stein \cite{Do}, reduces the ``robust stabilization problem" to the solution of a variant of the classical Nevanlinna-Pick problem for matrix-valued functions, in which the cost function to be minimised is given by $\mu_E$ for some uncertainty space $E$, in place of the usual operator norm.

 To date there is not a satisfactory mathematical treatment of this ``$\mu$-synthesis problem" in general, and so mathematicians have studied some special cases, such as for $2\times 2$-matrix-valued functions and for some natural choices of the space $E$. In particular the authors of \cite{ALY2015} investigated the following special case of $\mu_E$.
\begin{defn}\label{defCcc}
Let 
\[
E=\spa\left\{1,\bbm 0&1\\0&0 \ebm\right\} \subset \Bbb{C}^{2\times2},
\]
$\mathcal{P}_\mu$ is the domain in $ \Bbb{C}^3$  given by
\beq\label{defDomain}
\mathcal{P}_\mu= \{(a_{21},\tr A,\det A): A\in\mat, \,  \mu_E(A)<1\} \subset \Bbb{C}^3.
\eeq
\end{defn}
\noindent It was proved in \cite{ALY2015}  that  $\mathcal{P}=\mathcal{P}_\mu$.

The pentablock $\mathcal{P}$ is a region in 3-dimensional complex space which intersects $\Bbb{R}^3$ in a convex body bounded by five faces, comprising two triangles, an ellipse and two curved surfaces \cite{ALY2015}.
The closure of $\mathcal{P}$ is denoted by $\overline{\mathcal{P}}$.

In this paper we study rational $\overline{\mathcal{P}}$-inner functions. We define a rational $\overline{\mathcal{P}}$-inner function to be a rational analytic function from $\Bbb{D}$ into $\overline{\mathcal{P}}$ which maps $\Bbb{T}$ into $b\overline{\mathcal{P}}$, where $b\overline{\mathcal{P}}$ is the distinguished boundary of $\mathcal{P}$. The distinguished boundary $b\overline{\mathcal{P}}$ of $\mathcal{P}$ is
$$b\overline{\mathcal{P}} = \bigg\{(a, s, p) \in \Bbb{C}^3 : |s| \leq 2, \ |p|=1, \ s = \overline{s}p \; \text{and} \; |a| = \sqrt{1-\frac{1}{4}|s|^2}\bigg\},$$
see \cite{ALY2015}.
The degree of a rational $\overline{\mathcal{P}}$-inner function $x = (a, s, p)$ is defined to be the pair of numbers $(\textup{deg} \; a, \textup{deg}\;p)$. We say that deg $x \leq (m, n)$ if deg $a \leq m$ and deg $p \leq n$.
The group of automorphisms of the pentablock was studied in \cite{ALY2015} and \cite{Kos2015}. 

Recall that a classical {\em rational inner function} is a rational map $f$ from the unit disc $\D$  to its closure $\overline{\D}$ with the property that $f$ maps the unit circle $\T$ into itself.  A survey of results connecting inner functions and operator theory is given in \cite{CGP2015}.  All rational inner functions from the unit disc $\D$  to its closure $\overline{\D}$ are finite Blaschke products.
 \begin{defn}\label{finiteBlaschkeprod}
\textup{\cite[page 2]{AgMa}} A \textup{finite Blaschke product} is a function of the form
\begin{equation}\label{fin-Bl-Prod}
B(z) = c\prod^n_{i=1}B_{\alpha_i}(z)  \ \ \ \ \text{for} \ z\in \Bbb{C}\setminus\{1/\overline{\alpha_1},\dots,1/\overline{\alpha_n}\},
\end{equation}
where $B_{\alpha_i}(z) = \frac{z-\alpha_i}{1-\overline{\alpha_i}z}$, 
$|c|=1$ and $\alpha_1, \dots, \alpha_n \in \Bbb{D}$. 
\end{defn}

We have proved several results on the description and the construction of rational $\overline{\mathcal{P}}$-inner functions and on the connections between rational $\Gamma$-inner functions and rational 
$\overline{\mathcal{P}}$-inner functions.

One of our main results is the construction of a rational $\overline{\mathcal{P}}$-inner function $x = (a, s, p)$ of prescribed degree from the zeros of $a$ and $s$ and  $s^2 -4p$.
The zeros of $s^2 -4p$  in $\overline{\Bbb{D}}$ are called the royal nodes of $(s, p)$.
 One can consider this result as an analogue of the expression \eqref{fin-Bl-Prod} for a finite Blaschke product in terms of its zeros. Concretely, the following result is a corollary of Theorem \ref{constructpentainnfunc}.

\begin{thm}\label{constr-intro} Let $n$, $m$ be  positive integers and suppose the following points are given

{\rm (1)}  $\alpha_1, \alpha_2, \dots, \alpha_{k_0} \in \Bbb{D}$ and $\eta_1, \eta_2, \dots, \eta_{k_1} \in \Bbb{T}$, where $2k_0+k_1=n$;

{\rm (2)} $\beta_1, \beta_2, \dots, \beta_m \in \Bbb{D}$;

{\rm (3)}  $\sigma_1, \dots, \sigma_n$ in $\overline{\Bbb{D}}$ which are distinct from $\eta_1, \dots, \eta_{k_1}$.

\noindent Then there exists a rational $\overline{\mathcal{P}}$-inner function $x = (a, s, p)$ of degree $\leq (m+n, n)$ such that
 the zeros of $a$ in $\Bbb{D}$ are $\beta_1, \beta_2, \dots, \beta_m$, the zeros of $s$ in $\overline{\Bbb{D}}$ are $\alpha_1, \alpha_2, \dots, \alpha_{k_0}$, $\eta_1, \eta_2, \dots, \eta_{k_1}$, and the royal nodes of $(s, p)$ are $\sigma_1, \dots, \sigma_n$.
\end{thm}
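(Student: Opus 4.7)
The plan is to construct $x = (a,s,p)$ in two stages: first build the symmetrized-bidisc component $(s,p):\D\to\Gamma$ from the data in (1) and (3), then produce the scalar component $a$ from the data in (2) together with the boundary equation that defines $b\overline{\mathcal{P}}$.

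For the first stage I would invoke the existing construction theory for rational $\Gamma$-inner functions from prescribed zero sets of $s$ on $\overline{\D}$ and prescribed royal nodes, developed in \cite{ALY17,ALY18}. Given the points in (1) and (3), this yields a rational $\Gamma$-inner function $(s,p)$ of degree $\leq n$ whose $s$-component has in $\overline{\D}$ precisely the zeros $\alpha_1,\ldots,\alpha_{k_0},\eta_1,\ldots,\eta_{k_1}$, and whose royal nodes are $\sigma_1,\ldots,\sigma_n$; in particular $\deg p \leq n$ as required. The hypothesis that the $\sigma_j$ are distinct from the $\eta_\ell$ is needed here because a common unimodular zero of $s$ and of $s^2-4p$ would force $p$ to vanish there, contradicting $|p|=1$ on $\T$.

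For the second stage, the description of $b\overline{\mathcal{P}}$ forces $|a|^2 = 1 - \tfrac14|s|^2$ on $\T$. The $\Gamma$-inner identity $\overline{s}\,p = s$ on $\T$ gives
\[
1 - \tfrac14|s(z)|^2 \;=\; -\,\frac{s(z)^2-4p(z)}{4\,p(z)}\qquad(z\in\T),
\]
exhibiting the left-hand side as a nonnegative trigonometric expression whose zeros on $\T$ are precisely the royal nodes that lie there. Applying a Fej\'er-Riesz factorization, I would extract an outer rational factor $q$ of degree $\leq n$ with $|q|^2 = 1 - \tfrac14|s|^2$ on $\T$ whose zeros in $\overline{\D}$ are exactly the unimodular royal nodes with the correct multiplicities. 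Setting
\[
a(z) \;=\; c\,q(z)\prod_{j=1}^m \frac{z-\beta_j}{1-\overline{\beta_j}\,z}
\]
for a unimodular constant $c$ produces a rational function whose numerator has degree $\leq m+n$ and whose poles $1/\overline{\beta_j}$ lie outside $\overline{\D}$. Its degree as a rational function is at most $m+n$, its zeros in $\D$ are precisely $\beta_1,\ldots,\beta_m$, and $|a|^2 = 1-\tfrac14|s|^2$ on $\T$; combined with the $\Gamma$-inner conditions on $(s,p)$, this places $(a,s,p)(\T)\subset b\overline{\mathcal{P}}$.

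The main obstacle is showing that the triple $(a,s,p)$ built this way maps the open disc into $\overline{\mathcal{P}}$, not merely the circle into $b\overline{\mathcal{P}}$. This is the substantive content of the main construction result Theorem \ref{constructpentainnfunc}, of which the present statement is a direct corollary; I would expect it to be proved via the Schur-type inequalities characterizing $\overline{\mathcal{P}}$ (equivalently the condition $\mu_E(A)\leq 1$ recast on the triple $(a_{21},\tr A,\det A)$) together with a maximum-modulus argument. The construction is not unique: the freedom both in the choice of Fej\'er-Riesz factor $q$ and in the unimodular constant $c$ accounts for the \emph{family} of $\overline{\mathcal{P}}$-inner functions referred to in the abstract.
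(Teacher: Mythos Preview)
Your proposal is correct and follows essentially the same route as the paper: construct the $\Gamma$-inner pair $(s,p)$ from the data in (1) and (3) via \cite{ALY18}, extract the outer factor of $a$ by a Fej\'er--Riesz factorization of $1-\tfrac14|s|^2$ (the paper does this at the polynomial level, factoring $|D|^2-\tfrac14|E|^2$ to get $A$ and setting $q=A/D$), and adjoin the Blaschke product with zeros $\beta_1,\ldots,\beta_m$. You also correctly locate the only nontrivial point---that the resulting triple maps $\D$ into $\overline{\mathcal P}$---in Theorem~\ref{constructpentainnfunc} (via Theorem~\ref{prop446}), whose proof is exactly the maximum-modulus argument with the $\Psi_z$-characterization of $\overline{\mathcal P}$ that you anticipate.
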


There is a well developed theory of Schwarz lemmas for various domains by many authors, including Dineen  and Harris \cite{Din,Harris}. In particular, for the symmetrized bidisc and the tetrablock, see \cite{AY2001,AWY,EZ2009}.
Connections established in this paper between  ${\overline{\mathcal{P}}}$-inner functions and $\Gamma$-inner functions (especially Theorem \ref{prop446})  and  a  Schwarz lemma for the symmetrized bidisc due to 
Agler and Young \cite{AY2001}  allow us  to prove  a Schwarz lemma for the pentablock (Theorem \ref{Schwarz-P-inner}).

\begin{thm}\label{Schwarz-P-inner-intro}
Let $\lambda_0 \in \Bbb{D}\setminus\{0\}$ and $(a_0, s_0, p_0) \in \overline{\mathcal{P}}$.
Then the following conditions are equivalent:

{\rm (i)}  there exists a rational $\overline{\mathcal{P}}$-inner function $x=(a, s, p)$,  $x : \Bbb{D} \rightarrow \overline{\mathcal{P}}$ such that \newline $x(0) = (0, 0, 0)$ and $x(\lambda_0) = (a_0, s_0, p_0)$;

{\rm (ii)}  there exists an analytic function $x=(a, s, p)$,  $x : \Bbb{D} \rightarrow \overline{\mathcal{P}}$ such that \newline $x(0) = (0, 0, 0)$ and $x(\lambda_0) = (a_0, s_0, p_0)$, and  $|a_0| \; \le \; |\lambda_0| \sqrt{1-\frac{1}{4}|s_0|^2}$;

{\rm (iii)}
\begin{equation}\label{s0p0h0-int}
\dfrac{2|s_0 - p_0\overline{s}_0| + |s_{0}^{2} - 4p_0|}{4 - |s_0|^2} \le |\lambda_0|\;\; \text{and} \;\;|s_0| <2,
\end{equation}
and
\begin{equation}\label{a0s0h0-int}
|a_0| \; \le \; |\lambda_0| \sqrt{1-\frac{1}{4}|s_0|^2}.
\end{equation}
\end{thm}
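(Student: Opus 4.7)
The plan is to prove the equivalence cyclically, (i) $\Rightarrow$ (ii) $\Rightarrow$ (iii) $\Rightarrow$ (i). The implication (ii) $\Rightarrow$ (iii) is the easiest: if $x=(a,s,p)\colon\D\to\overline{\mathcal P}$ is analytic with $x(0)=0$ and $x(\lambda_0)=(a_0,s_0,p_0)$, then $(s,p)\colon\D\to\Gamma$ is an analytic map with $(s,p)(0)=(0,0)$ and $(s,p)(\lambda_0)=(s_0,p_0)$, and the Agler--Young Schwarz lemma for $\Gamma$ \cite{AY2001} delivers \eqref{s0p0h0-int} and the strict bound $|s_0|<2$; inequality \eqref{a0s0h0-int} is already part of the hypothesis.

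For (i) $\Rightarrow$ (ii) I would exploit the outer function $q$ on $\D$ determined by $|q(\zeta)|^{2}=1-|s(\zeta)|^{2}/4$ for a.e.\ $\zeta\in\T$, which exists because $\log(1-|s|^{2}/4)$ is integrable on $\T$ when $(s,p)$ is rational $\Gamma$-inner. The distinguished-boundary identity $|a|^{2}=1-|s|^{2}/4$ on $\T$ shows that $|a|=|q|$ boundarily, so $a/q$ has unimodular boundary values; after absorbing any common zero of $a$ and $q$ at the origin one writes $a=B\,q$ for a finite Blaschke product $B$ with $B(0)=0$. The classical Schwarz lemma gives $|B(\lambda)|\le|\lambda|$ on $\D$, and a Jensen-type argument (applied to the outer function $q$, using that $|s|^{2}$ is subharmonic) yields $|q(\lambda)|^{2}\le 1-|s(\lambda)|^{2}/4$ throughout $\D$. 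Evaluating at $\lambda_0$ produces \eqref{a0s0h0-int}.

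The substantive step is (iii) $\Rightarrow$ (i). The first move is to invoke the Agler--Young theorem in the existence direction: inequality \eqref{s0p0h0-int} together with $|s_0|<2$ furnishes a rational $\Gamma$-inner map $(s,p)$ of some degree $n$ with $(s,p)(0)=(0,0)$ and $(s,p)(\lambda_0)=(s_0,p_0)$. Reading off the zeros $\alpha_i\in\D$, $\eta_j\in\T$ of $s$ and the royal nodes $\sigma_1,\dots,\sigma_n$ of $(s,p)$ supplies the data required by Theorem~\ref{constr-intro}, which for any $\beta_1,\dots,\beta_m\in\D$ produces a rational $\overline{\mathcal P}$-inner $(a,s,p)$ of degree $\le(m+n,n)$ whose $a$-component vanishes at the $\beta_i$. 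By the structure theory of $\overline{\mathcal P}$-inner functions (Theorem~\ref{prop446}), this $a$ factors as $a=B\,q$ with $q$ the outer Fej\'er--Riesz spectral factor of $p-s^{2}/4$ and $B$ a finite Blaschke product; prescribing $a(0)=0$ and $a(\lambda_0)=a_0$ then reduces to a two-point Nevanlinna--Pick problem for the Schur function $B$, solvable precisely when $|a_0/(\lambda_0\,q(\lambda_0))|\le 1$, which is the content of \eqref{a0s0h0-int}. In fact $m=1$ suffices, with $B$ a single M\"obius factor.

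The main obstacle will be synchronising the two construction steps so that the resulting triple genuinely lies in $\overline{\mathcal P}$ and has the claimed degree: one must check that the chosen $(s,p)$ admits a spectral factor $q$ with $q(0)\neq 0$ (or that any zero of $q$ at $0$ is properly absorbed into $a$), and handle degenerate configurations in which $\lambda_0$ coincides with a royal node of $(s,p)$, or $4p_0=s_0^{2}$, or $s_0=0$. Verifying these compatibility conditions, and that the final $(a,s,p)$ really maps into $\overline{\mathcal P}$ rather than merely satisfying the boundary identity on $\T$, is exactly what Theorem~\ref{constr-intro} and Theorem~\ref{prop446} are designed to deliver.
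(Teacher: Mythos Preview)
Your approach is essentially the paper's: for (ii)$\Rightarrow$(iii) apply the Agler--Young Schwarz lemma for $\Gamma$; for (iii)$\Rightarrow$(i) first get a rational $\Gamma$-inner interpolant $(s,p)$, then use the structure theory (Theorem~\ref{prop446}) to write any admissible first component as $a=cB\cdot(A/D)$ with $A/D$ the outer factor satisfying $|A/D|^2=1-|s|^2/4$ on $\T$, and solve a two-point Pick problem for the Blaschke product $B$; for (i)$\Rightarrow$(ii)/(iii) factor $a$ through that same outer function and apply the classical Schwarz lemma to $B$. Your Jensen/subharmonicity justification for $|q(\lambda)|^2\le 1-|s(\lambda)|^2/4$ is a nice touch that the paper does not spell out.

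Two points where you diverge from the paper deserve attention. First, you say the Agler--Young theorem already furnishes a \emph{rational $\Gamma$-inner} interpolant; it only gives an analytic one. The paper inserts Costara's theorem \cite{costara2005} (equivalently \cite[Theorem~8.1]{ALY13-2}) to pass from an analytic solution of the two-point $\Gamma$-problem to a rational $\Gamma$-inner solution, and you should do the same. The detour through Theorem~\ref{constr-intro} is then unnecessary---once a rational $\Gamma$-inner $(s,p)$ is in hand, Theorem~\ref{prop446} (which the paper uses directly) produces the $\overline{\mathcal P}$-inner triple for any Blaschke $B$.

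Second, in (iii)$\Rightarrow$(i) you assert that solvability of the Pick problem for $B$, namely $|a_0|\le|\lambda_0|\,|q(\lambda_0)|$, ``is the content of~\eqref{a0s0h0-int}''. That tacitly assumes $|q(\lambda_0)|=\sqrt{1-|s_0|^2/4}$, whereas your own Jensen argument in (i)$\Rightarrow$(ii) gives only $|q(\lambda_0)|\le\sqrt{1-|s_0|^2/4}$; for instance, with the $\Gamma$-inner $h(\lambda)=(\lambda,\lambda^2)$ one gets $q\equiv\sqrt3/2$ while $\sqrt{1-|s(\lambda_0)|^2/4}=\sqrt{1-|\lambda_0|^2/4}$. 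The paper makes exactly the same identification (writing ``for $\lambda\in\D$, $|A/D(\lambda)|^2=1-\tfrac14|s(\lambda)|^2$'' and then evaluating at $\lambda_0$), so you are faithfully reproducing their argument here; just be aware that this step, as written in either version, is where any remaining work lies.
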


The construction of an interpolating function  $x=(a, s, p)$,  $x : \Bbb{D} \rightarrow \overline{\mathcal{P}}$ such that $x(0) = (0, 0, 0)$ and $x(\lambda_0) = (a_0, s_0, p_0)$ is given in Theorem \ref{thmtr2911} and in Theorem \ref{Schwarz-P-inner}.

The authors are grateful to Nicholas Young for some helpful suggestions.  \\


\section{The pentablock $\mathcal{P}$ and the symmetrized bidisc $\Gamma$ }\label{3sec1}

In 1999 Agler and Young introduced the symmetrized bidisc in \cite{AY99}. Following \cite{AY99}, we shall often use the co-ordinates $(s,p)$ for points in the symmetrized bidisc $\Bbb{G}$, chosen to suggest 'sum' and 'product'. 

\begin{defn}\label{symmbidiscdfn}
The symmetrized bidisc is the set
\begin{equation}\label{symm}
\Bbb{G} \stackrel{\mathrm{def}}{=} \{(z+w,zw):|z| \; \textless \; 1, |w| \; \textless \; 1\},
\end{equation}
and its closure is
\begin{equation*}
\Gamma \stackrel{\mathrm{def}}{=} \{(z+w,zw):|z| \leq 1, |w| \leq 1\}.
\end{equation*}
\end{defn}
The following results from \cite{ALY13} give useful criteria for membership of $\Bbb{G}$, of the distinguished boundary $b\Gamma$ of $\Gamma$ and of the topological boundary $\partial \Gamma$ of $\Gamma$.

\begin{prop}\label{symm-bidisc}\textup{\cite[Proposition 3.2]{ALY13}}
Let $(s,p)$ belong to $\Bbb{C}^2$. Then
\begin{enumerate}
\item $(s,p)$ belongs to $\Bbb{G}$ if and only if $$|s- \overline{s}p| \; \textless \; 1-|p|^2;$$
\item $(s,p)$ belongs to $\Gamma$ if and only if $$|s| \leq 2 \textit{ and } |s- \overline{s}p| \leq 1-|p|^2;$$
\item $(s,p)$ lies in $b\Gamma$ if and only if $$|p|=1, |s| \leq 2 \textit{ and } s- \overline{s}p = 0;$$
\end{enumerate}
\end{prop}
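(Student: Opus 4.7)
The plan is to verify the three characterizations by parametrizing points of $\Gamma$ as $(s,p)=(z+w,zw)$ for $z,w\in\overline{\Bbb{D}}$, combined with the key identity
\[
s-\overline{s}p \;=\; z(1-|w|^2)+w(1-|z|^2).
\]

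For the forward direction of (1), assume $|z|,|w|<1$. The triangle inequality applied to the identity above gives $|s-\overline{s}p|\le(|z|+|w|)(1-|z||w|)$, while $(1-|z|)(1-|w|)>0$ rearranges to $|z|+|w|<1+|z||w|$; together with the factorization $1-|p|^2=(1-|z||w|)(1+|z||w|)$ this delivers the strict inequality $|s-\overline{s}p|<1-|p|^2$.

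For the reverse implication---the main obstacle---I would invoke the magic-function characterization of $\Bbb{G}$ due to Agler and Young: $(s,p)\in\Bbb{G}$ if and only if $2-\omega s\ne 0$ on $\overline{\Bbb{D}}$ and $|\Phi_\omega(s,p)|<1$ for every $\omega\in\overline{\Bbb{D}}$, where $\Phi_\omega(s,p):=(2\omega p-s)/(2-\omega s)$. A direct expansion yields, for $\omega\in\Bbb{T}$,
\[
|2-\omega s|^2-|2\omega p-s|^2 \;=\; 4(1-|p|^2)-4\,\Re\bigl(\omega(s-\overline{s}p)\bigr),
\]
whose infimum over $\omega\in\Bbb{T}$ equals $4(1-|p|^2)-4|s-\overline{s}p|$. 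Because $\omega\mapsto\Phi_\omega(s,p)$ is linear fractional in $\omega$, the maximum modulus principle reduces the supremum of $|\Phi_\omega|$ over $\overline{\Bbb{D}}$ to the supremum over $\Bbb{T}$, from which $(s,p)\in\Bbb{G}\iff|s-\overline{s}p|<1-|p|^2$. The hypothesis also delivers $|s|<1+|p|<2$ via $|s|(1-|p|)\le|s-\overline{s}p|$, ensuring that $2-\omega s$ does not vanish on $\overline{\Bbb{D}}$.

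Part (2) follows by passing to closures in (1); the condition $|s|\le 2$ is redundant when $|p|<1$ (by the same estimate $|s|(1-|p|)\le|s-\overline{s}p|\le 1-|p|^2$) but becomes an independent constraint precisely when $|p|=1$. For part (3), combining (2) with $|p|=1$ forces $|s-\overline{s}p|\le 0$, so $s=\overline{s}p$, while $|s|\le 2$ persists; conversely, every such point lies in $\Gamma$ by direct verification, and the parametrization $\{(z+w,zw):|z|=|w|=1\}$ identifies the set of these points with the Shilov boundary $b\Gamma$ of $\Gamma$.
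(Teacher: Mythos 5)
The paper itself offers no proof of this proposition: it is imported verbatim from \cite[Proposition 3.2]{ALY13}, so there is no internal argument to compare against. Your computations for part (1) are correct on both sides: the identity $s-\overline{s}p=z(1-|w|^2)+w(1-|z|^2)$ together with the (in fact exact) equality $|z|(1-|w|^2)+|w|(1-|z|^2)=(|z|+|w|)(1-|z||w|)$ gives the forward implication, and the expansion $|2-\omega s|^2-|2\omega p-s|^2=4(1-|p|^2)-4\,\Re\bigl(\omega(s-\overline{s}p)\bigr)$ together with $|s|(1-|p|)\le|s-\overline{s}p|$ correctly reduces the converse to the Agler--Young magic-function characterization of $\Bbb{G}$, which you are entitled to cite (though note that the inequality $|s-\overline{s}p|<1-|p|^2$ is usually listed as one of the equivalent conditions in that very theorem, so the argument is more a derivation of one known criterion from another than an independent proof).

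The genuine gap is in part (2). ``Passing to closures'' yields only one inclusion: the set $F=\{(s,p):|s|\le 2,\ |s-\overline{s}p|\le 1-|p|^2\}$ is closed and contains $\Bbb{G}$ by part (1), hence $\Gamma=\overline{\Bbb{G}}\subseteq F$. The substantive half of (2) is the reverse inclusion $F\subseteq\Gamma$, i.e.\ that every point satisfying the two non-strict inequalities is actually a limit of points of $\Bbb{G}$, and nothing in your argument addresses this; a closed set defined by relaxing strict inequalities can in general be strictly larger than the closure of the open set. One repair: for $(s,p)\in F$ one has $|s|\le 1+|p|$ (from $|s|(1-|p|)\le 1-|p|^2$ when $|p|<1$, and from the hypothesis $|s|\le 2$ when $|p|=1$), and then the estimate $|rs-r^{3}\overline{s}p|\le r(1-|p|^2)+r(1-r^2)(1+|p|)|p|<1-r^4|p|^2$ (the difference factors as $(1-r)\bigl(r^3|p|^2-r(1+r)|p|+1\bigr)>0$) shows $(rs,r^2p)\in\Bbb{G}$ for all $r\in(0,1)$, whence $(s,p)\in\Gamma$; beware that the cruder bound $|s|\le 2$ is not sufficient here. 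A secondary point: your part (3) presupposes the identification of the distinguished boundary $b\Gamma$ with the symmetrized torus $\{(z+w,zw):|z|=|w|=1\}$, which is itself a theorem of Agler and Young and should be cited explicitly, and the passage from $\{|p|=1,\ |s|\le 2,\ s=\overline{s}p\}$ back to that parametrization requires locating the roots of $z^2-sz+p$ on $\Bbb{T}$, which you assert but do not carry out.
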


The following terminology was introduced in \cite{AY}.

\begin{defn}\label{royalvar}
The royal variety $\mathcal{R}_{\Gamma}$ of the symmetrized bidisc is
$$\mathcal{R}_{\Gamma} = \{(s, p) \in \Bbb{C}^2 : s^2 = 4p\}.$$
\end{defn}
\begin{lem}\label{AutGonroyalvar}
\textup{\cite[Lemma 4.3]{AY2008}} Every automorphism of $\Bbb{G}$ maps the royal variety $\mathcal{R}_{\Gamma} \cap \Bbb{G}$ onto itself.
\end{lem}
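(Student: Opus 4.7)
My plan is to use the symmetrization map $\pi:\Bbb{D}^2\to\Bbb{G}$, $\pi(z,w)=(z+w,zw)$, together with the classical description of $\mathrm{Aut}(\Bbb{G})$, to reduce the statement to the trivial observation that the diagonal of $\Bbb{D}^2$ is preserved by any product map.

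The first step is to rewrite $\mathcal{R}_\Gamma\cap\Bbb{G}$ in terms of $\pi$. For any $(z,w)\in\Bbb{D}^2$ the pair $(s,p)=\pi(z,w)$ satisfies the identity $s^2-4p=(z-w)^2$, so $(s,p)$ lies in $\mathcal{R}_\Gamma$ exactly when $z=w$. Hence $\mathcal{R}_\Gamma\cap\Bbb{G}=\pi(\Delta)$, where $\Delta=\{(z,z):z\in\Bbb{D}\}$ is the diagonal of the bidisc.

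The second step is to invoke the known classification of $\mathrm{Aut}(\Bbb{G})$: every automorphism $\Phi$ of $\Bbb{G}$ lifts through $\pi$ to an automorphism of $\Bbb{D}^2$ that commutes with the coordinate swap $\tau(z,w)=(w,z)$, and any such lift is necessarily of the form $\tilde{\Phi}_\psi(z,w)=(\psi(z),\psi(w))$ for some $\psi\in\mathrm{Aut}(\Bbb{D})$. Since $\tilde{\Phi}_\psi$ visibly preserves $\Delta$, and $\pi\circ\tilde{\Phi}_\psi=\Phi\circ\pi$, I obtain
$$\Phi\bigl(\mathcal{R}_\Gamma\cap\Bbb{G}\bigr)=\Phi(\pi(\Delta))=\pi(\tilde{\Phi}_\psi(\Delta))=\pi(\Delta)=\mathcal{R}_\Gamma\cap\Bbb{G},$$
which is the conclusion.

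The main obstacle is the structural theorem for $\mathrm{Aut}(\Bbb{G})$, which is where all the non-trivial work sits. Fortunately this has been established independently (e.g.\ by Jarnicki--Pflug and in earlier Agler--Young papers) via methods, such as an analysis of complex geodesics or of the Carath\'eodory pseudometric, that do not require prior knowledge of the royal variety, so for the purpose of the present lemma I would simply quote it. A slightly more self-contained variant would be to argue that $\pi$ is a proper holomorphic map of degree two with critical-value set exactly $\pi(\Delta)$, and then show by hand that automorphisms of $\Bbb{G}$ lift through $\pi$ by appealing to properness and simple-connectedness of $\Bbb{G}\setminus\pi(\Delta)$; but this is essentially the same argument repackaged.
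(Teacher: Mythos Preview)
The paper does not supply its own proof of this lemma; it merely quotes it from \cite[Lemma~4.3]{AY2008}. So there is no in-paper argument to compare against.

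Your argument is correct. The identification $\mathcal{R}_\Gamma\cap\Bbb{G}=\pi(\Delta)$ via $s^2-4p=(z-w)^2$ is exactly right, and once one grants the structure theorem $\mathrm{Aut}(\Bbb{G})=\{\tau_v:v\in\mathrm{Aut}\,\Bbb{D}\}$ (which the present paper also uses freely, e.g.\ in Section~\ref{singsetpenta}), the conclusion is immediate from $\tau_v(2z,z^2)=(2v(z),v(z)^2)$. You are also right that the structure theorem is the only substantive input and that it is proved elsewhere by methods independent of the royal variety, so there is no circularity.

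One small remark: your ``more self-contained variant'' sketch---lifting automorphisms through the proper map $\pi$ by analysing $\Bbb{G}\setminus\pi(\Delta)$---already presupposes that the branch locus $\pi(\Delta)$ is automorphism-invariant, which is precisely what you are trying to prove. So that route would need to be organised more carefully (e.g.\ by first showing directly that any automorphism of $\Bbb{G}$ permutes the critical-value set of \emph{some} proper surjection from $\Bbb{D}^2$), and in practice one just quotes the classification as you do in your main argument.
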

The royal variety is the only complex geodesic in the symmetrized bidisc that is invariant under all automorphisms of $\Bbb{G}$ \cite{ALY19}.

\begin{rem}\label{PGrelated} {\rm
The pentablock is closely related to the symmetrized bidisc. Indeed, Definition \ref{Penta} shows that $\mathcal{P}$ is fibred over $\Bbb{G}$ by the map $(a, s, p) \mapsto (s, p)$, since if $A \in \Bbb{B}^{2\times 2}$ then the eigenvalues of $A$ lie in $\Bbb{D}$ and so $(\tr A, \det A) \in \Bbb{G}$. Thus, for every point $(a, s, p) \in \mathcal{P}$, the  point  $(s,p)  \in \Bbb{G}$.}
\end{rem}

\begin{rem} {\rm
In \cite{Kos2015} Kosinski commented that the pentablock is a Hartogs Domain. It follows from the descriptions of 
  the pentablock $\mathcal{P}$ in \cite{ALY2015} that  $\mathcal{P}$ can be seen as a Hartogs domain in $\Bbb{C}^3$ over the symmetrized bidisc $\Bbb{G}$, that is,
$$\mathcal{P} = \Big\{(a, s, p) \in \Bbb{D} \times \Bbb{G} : |a|^2 \; \textless \; e^{-\varphi(s, p)}\Big\},$$
where
$$\varphi(s, p) = -2\log \left|1-\frac{\frac{1}{2}s\overline{\beta}}{1+\sqrt{1-|\beta|^2}}\right|,$$
$(s, p) \in \Bbb{G}$ and $\beta = \frac{s-\overline{s}p}{1-|p|^2}$. \\
Hartogs domains are important objects in several complex variables.
}
\end{rem}

\begin{defn}\label{C-convex}
\textup{\cite[page 259]{JP2013}} A domain $D \subset \Bbb{C}^n$ is called $\Bbb{C}$-convex if for any complex line $\ell = a+b\Bbb{C}, \; 0 \neq a, b \in \Bbb{C}^n$ such that $\ell \cap D \neq \emptyset$, this intersection $\ell \cap D$ is connected and simply connected.
\end{defn}

It is known that the pentablock is polynomially convex and starlike, see \cite{ALY2015}. 
It was shown in \cite{Za2015} that  the pentablock $\mathcal{P}$ is hyperconvex and that $\mathcal{P}$ cannot be exhausted by domains biholomorphic to convex ones. Later in  \cite[Theorem 1.1]{S2020} it was proved that $\mathcal{P}$ is a $\Bbb{C}$-convex domain.\\

The following results from \cite{ALY2015} give useful criteria for membership of $\mathcal{P}$.

\begin{defn}\label{Psi_z(a, s, p),kappa(s, p)}
\textup{\cite[Definition 4.1]{ALY2015}} For $z \in \Bbb{D}$ and $(a, s, p) \in \Bbb{C}^3$ define $\Psi_z(a, s, p)$ by
\begin{equation}\label{Psi_z}
\Psi_z(a, s, p) = \frac{a(1-|z|^2)}{1-sz+pz^2} \ \ \text{ whenever } \ 1-sz+pz^2 \neq 0.
\end{equation}
\end{defn}

The polynomial map implicit in the definition \eqref{pentablock} can be written as
\begin{equation}\label{pi}
\pi(A)=(a_{21}, \tr A, \det A) \; \text{for} \; A=[a_{ij}]_{i,j=1}^2 \in \Bbb{C}^{2\times 2}.
\end{equation}
Thus $\mathcal{P} = \pi(\mathbb{B}^{2\times 2})$. 

\begin{thm}\label{pentathm1.1}
\textup{\cite[Theorem 1.1]{ALY2015}}  Let $$(s,p)=(\lambda_1+\lambda_2,\lambda_1\lambda_2)$$ where $\lambda_1$, $\lambda_2$ $\in$ $\Bbb{D}$. Let a $\in \Bbb{C}$ and let $$\beta=\frac{s-\overline{s}p}{1-|p|^2}.$$
Then $|\beta| \; \textless \; 1$ and the following statements are equivalent:
\begin{enumerate}
\item (a,s,p) $\in \mathcal{P}$, that is, there exists $A \in \Bbb{C}^{2\times 2}$ such that $\|A\| < 1$ and $\pi(A) = (a, s, p)$;
\item $|a| \; \textless \; |1-\frac{\frac{1}{2}s \overline{\beta}}{1+\sqrt{1-|\beta|^2}}|$;
\item $|a| \; \textless \; \frac{1}{2}|1-\overline{\lambda_2}\lambda_1|+\frac{1}{2}(1-|\lambda_1|^2)^\frac{1}{2}(1-|\lambda_2|^2)^\frac{1}{2}$;
\item $\sup_{z\in \Bbb{D}}|\Psi_{z} (a,s,p)| \; \textless \; 1$.
\end{enumerate}
\end{thm}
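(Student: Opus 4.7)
My plan is to establish the preliminary bound $|\beta|<1$ and then prove the three equivalences (1)$\Leftrightarrow$(3), (3)$\Leftrightarrow$(2), and (2)$\Leftrightarrow$(4), each by a different method: matrix parameterization, pure algebra, and an extremal Poisson-kernel-type calculation, respectively. Since $s=\lambda_1+\lambda_2$ and $p=\lambda_1\lambda_2$ with $\lambda_1,\lambda_2\in\D$, the pair $(s,p)$ lies in $\Bbb{G}$, so Proposition \ref{symm-bidisc}(1) gives $|s-\bar s p|<1-|p|^2$, which is exactly $|\beta|<1$.

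For (1)$\Leftrightarrow$(3) I would parameterize every $A\in\mat$ with $\tr A=s$ and $\det A=p$ by its Schur triangularization $A=U^{*}TU$, where $T=\begin{pmatrix}\lambda_1 & \gamma\\ 0 & \lambda_2\end{pmatrix}$, $U$ is a $2\times 2$ unitary, and $\gamma\in\C$ is free. Writing $\|T\|^{2}$ as the larger root of the characteristic polynomial of $T^{*}T$ shows $\|A\|=\|T\|<1$ iff $|\gamma|^{2}<(1-|\lambda_{1}|^{2})(1-|\lambda_{2}|^{2})$. Expanding $(U^{*}TU)_{21}$ and using $|u_{12}|=|u_{21}|$ (valid for any $2\times 2$ unitary) yields
\[
a_{21}=\bar u_{12}\bigl((\lambda_1-\lambda_2)u_{11}+\gamma u_{21}\bigr).
\]
Setting $|u_{21}|=|u_{12}|=\sin\theta$, $|u_{11}|=|u_{22}|=\cos\theta$, optimizing the phases, and invoking the identity $|\lambda_1-\lambda_2|^{2}+(1-|\lambda_1|^{2})(1-|\lambda_2|^{2})=|1-\bar\lambda_2\lambda_1|^{2}$, the supremum of $|a_{21}|$ over admissible $A$ comes out to
\[
\tfrac12|1-\bar\lambda_2\lambda_1|+\tfrac12\sqrt{(1-|\lambda_1|^{2})(1-|\lambda_2|^{2})}.
\]
Since diagonal unitary conjugations rotate the phase of $a_{21}$, the set of achievable values is an open disc of this radius, giving (1)$\Leftrightarrow$(3).

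For (3)$\Leftrightarrow$(2) I would carry out a direct algebraic verification of
\[
\left|1-\frac{\tfrac12 s\bar\beta}{1+\sqrt{1-|\beta|^{2}}}\right|=\tfrac12|1-\bar\lambda_2\lambda_1|+\tfrac12\sqrt{(1-|\lambda_1|^{2})(1-|\lambda_2|^{2})},
\]
using the determinant identity $(1-|p|^{2})^{2}-|s-\bar s p|^{2}=(1-|\lambda_1|^{2})(1-|\lambda_2|^{2})|1-\bar\lambda_2\lambda_1|^{2}$ to simplify $\sqrt{1-|\beta|^{2}}$. For (2)$\Leftrightarrow$(4), I would factor $1-sz+pz^{2}=(1-\lambda_1 z)(1-\lambda_2 z)$ and compute
\[
M:=\sup_{z\in\D}\frac{1-|z|^{2}}{|1-\lambda_1 z||1-\lambda_2 z|}
\]
explicitly. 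Rotating $z$ so that the critical direction is real reduces this to a one-variable optimization; solving its critical-point equation yields a closed form for $M$ equal to the reciprocal of the right-hand side of (2). Then $\sup_{z}|\Psi_{z}(a,s,p)|=|a|M<1$ is exactly (2).

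The main obstacle is the extremal calculation in (2)$\Leftrightarrow$(4): identifying the maximizer of $M$ in $\D$ and simplifying the result down to $\bigl|1-\tfrac{s\bar\beta/2}{1+\sqrt{1-|\beta|^{2}}}\bigr|^{-1}$ requires careful bookkeeping. The cleanest route I see is to locate the maximizer on the line $z=t\bar\beta/|\beta|$ (aligning $sz-pz^2$ with the $\beta$-direction), converting the problem into a quadratic extremum whose solution matches the claimed reciprocal after invoking the same determinant-type identity used in step (3)$\Leftrightarrow$(2).
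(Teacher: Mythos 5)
The paper itself offers no proof of this statement: it is imported verbatim from \cite[Theorem 1.1]{ALY2015}, so there is no internal argument to compare yours against, and your proposal has to be judged on its own terms. Most of it holds up. The deduction of $|\beta|<1$ from Proposition \ref{symm-bidisc}(1) is exactly right. Your Schur-triangularization route to (1)$\Leftrightarrow$(3) is correct: the formula $a_{21}=\bar u_{12}\bigl((\lambda_1-\lambda_2)u_{11}+\gamma u_{21}\bigr)$ does follow from row-orthogonality of $U$ together with $|u_{12}|=|u_{21}|$; the condition $\|T\|<1$ is equivalent to $|\gamma|^2<(1-|\lambda_1|^2)(1-|\lambda_2|^2)$ by a Schur-complement computation; and the optimisation $\sup_\theta\bigl(\tfrac{c}{2}\sin 2\theta+\tfrac{d}{2}(1-\cos 2\theta)\bigr)=\tfrac{d}{2}+\tfrac12\sqrt{c^2+d^2}$ together with your identity $|\lambda_1-\lambda_2|^2+(1-|\lambda_1|^2)(1-|\lambda_2|^2)=|1-\bar\lambda_2\lambda_1|^2$ gives precisely the radius in (3); the freedom in $\arg\gamma$ and in diagonal unitary conjugation shows every modulus below that supremum is attained. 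The identity behind (3)$\Leftrightarrow$(2) is also genuinely true: with $u=1-\bar\lambda_1\lambda_2$ and $r_i=1-|\lambda_i|^2$ one computes $2(1-|p|^2)-s(\bar s-s\bar p)=r_1\bar u+r_2u$, and the left-hand side of (2) collapses to $\tfrac12\bigl(|u|+\sqrt{r_1r_2}\bigr)$ using exactly the determinant identity you quote, so that step closes.

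The one genuine gap is (2)$\Leftrightarrow$(4). The equivalence amounts to the exact evaluation
\begin{equation*}
\sup_{z\in\Bbb{D}}\frac{1-|z|^2}{|1-\lambda_1z|\,|1-\lambda_2z|}=\Bigl(\tfrac12|1-\bar\lambda_2\lambda_1|+\tfrac12\sqrt{r_1r_2}\Bigr)^{-1},
\end{equation*}
and both inequalities are needed: a lower bound on the supremum yields (4)$\Rightarrow$(2), but (2)$\Rightarrow$(4) requires the matching \emph{global upper bound} over all of $\Bbb{D}$, not merely over the ray $z=t\bar\beta/|\beta|$. Your sketch produces a candidate maximiser on that ray and asserts, without justification, that restricting to the ray is not lossy. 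To close this you would need either to show that the (continuous, boundary-vanishing) function has a unique interior critical point, or to find a direct global estimate --- for instance via
$\frac{(1-|z|^2)^2}{|1-\lambda_1z|^2|1-\lambda_2z|^2}=\frac{(1-|m_1(z)|^2)(1-|m_2(z)|^2)}{r_1r_2}$ with $m_i(z)=(z-\bar\lambda_i)/(1-\lambda_iz)$, followed by an arithmetic--geometric-mean-type bound. The claimed value of the supremum is correct (it checks against $\lambda_2=0$ and $\lambda_1=\lambda_2$), so the architecture is sound, but as written this step is an announced computation rather than a proof.
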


\begin{thm}\label{pentathm5.3}
\textup{\cite[Theorem 5.3]{ALY2015}} Let
$$(s, p) = (\beta+\overline{\beta}p, p) = (\lambda_1+\lambda_2, \lambda_1\lambda_2)\in \Gamma$$
where $|\beta| \leq 1$ and if $|p| = 1$ then $\beta = \frac{1}{2}s$. Let $a \in \Bbb{C}$. The following statements are equivalent:
\begin{enumerate}
\item $(a, s, p) \in \overline{\mathcal{P}}$;

\item $|a| \leq |1-\frac{\frac{1}{2}s\overline{\beta}}{1+\sqrt{1-|\beta|^2}}|$;
\item $|a| \leq \frac{1}{2}|1-\overline{\lambda}_2\lambda_1| + \frac{1}{2}(1-|\lambda_1|^2)^{\frac{1}{2}}(1-|\lambda_2|^2)^{\frac{1}{2}}$;
\item $|\Psi_z(a, s, p)| \leq 1$ for all $z \in \Bbb{D}$, where $\Psi_z$ is defined by equation \eqref{Psi_z};
\item there exists $A \in \Bbb{C}^{2\times 2}$ such that $\|A\| \leq 1$ and $\pi(A) = (a, s, p).$
\end{enumerate}
\end{thm}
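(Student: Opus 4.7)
I will establish the cycle (ii)$\Rightarrow$(iii)$\Rightarrow$(i)$\Rightarrow$(ii). The direction (i)$\Rightarrow$(ii) is essentially automatic, since a rational $\overline{\mathcal{P}}$-inner function is analytic into $\overline{\mathcal{P}}$, so the existence part of (ii) is free. To derive the inequality $|a_0|\le|\lambda_0|\sqrt{1-\tfrac14|s_0|^2}$, I would apply the classical Schwarz lemma to each holomorphic self-map $\lambda\mapsto\Psi_z(x(\lambda))$ of $\overline{\D}$ (for $z\in\D$ fixed); these are self-maps of $\overline\D$ by Theorem~\ref{pentathm5.3}(4) and vanish at $\lambda=0$. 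This yields $|\Psi_z(a_0,s_0,p_0)|\le|\lambda_0|$ for every $z\in\D$. Taking the supremum over $z$ and invoking the equivalence (3)$\Leftrightarrow$(4) of Theorem~\ref{pentathm5.3} bounds $|a_0|$ by
\[
|\lambda_0|\Bigl(\tfrac12|1-\overline{\lambda_2}\lambda_1|+\tfrac12\sqrt{(1-|\lambda_1|^2)(1-|\lambda_2|^2)}\Bigr),
\]
where $\lambda_1,\lambda_2$ are the roots of $\mu^2-s_0\mu+p_0=0$, and a short elementary calculation shows this expression is in turn $\le|\lambda_0|\sqrt{1-\tfrac14|s_0|^2}$.

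For (ii)$\Rightarrow$(iii), the projection $(s,p):\D\to\Gamma$ is analytic with $(s,p)(0)=(0,0)$ and $(s,p)(\lambda_0)=(s_0,p_0)$, so the Schwarz lemma for the symmetrized bidisc due to Agler--Young~\cite{AY2001} delivers the first inequality of (iii). The bound $|s_0|<2$ is forced because $|s|\le 2$ on $\Gamma$, and $|s(\lambda_0)|=2$ would, by the maximum principle applied to the subharmonic function $|s|$ on $\D$, make $s$ constant, contradicting $s(0)=0$. The second inequality of (iii) is simply the one explicitly included in (ii).

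The main step is (iii)$\Rightarrow$(i), which I would carry out constructively in two stages. First, the first inequality of (iii) is exactly the hypothesis of the Agler--Young Schwarz lemma in reverse, producing a rational $\Gamma$-inner function $(s,p):\D\to\Gamma$ with $(s,p)(0)=(0,0)$ and $(s,p)(\lambda_0)=(s_0,p_0)$. Second, having fixed $(s,p)$, I invoke Theorem~\ref{constr-intro} to build $a$: a Fej\'er--Riesz factorisation of the non-negative trigonometric function $1-\tfrac14|s(\zeta)|^2$ on $\T$ supplies the outer part of $a$, and the construction leaves free the zeros of $a$ in $\D$ together with a unimodular scalar. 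I would place one zero at $0$ to secure $a(0)=0$ and use the residual Blaschke freedom and unimodular scalar to solve $a(\lambda_0)=a_0$. The second inequality of (iii) is exactly the condition under which this one-point interpolation is feasible, via the classical Schwarz--Pick lemma applied to $a(\lambda)/\lambda$ divided by the outer factor. The principal obstacle is to verify rigorously that the evaluation map $a\mapsto a(\lambda_0)$ on the family produced by Theorem~\ref{constr-intro} surjects onto the closed disc $\{w:|w|\le|\lambda_0|\sqrt{1-\tfrac14|s_0|^2}\}$; this requires controlling the value of the Fej\'er--Riesz outer factor at $\lambda_0$, which depends on the initial choice of $(s,p)$ and on the prescribed royal nodes, and matching the available Blaschke freedom against the single scalar interpolation constraint.
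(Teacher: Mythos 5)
Your proposal does not prove the statement in question. Theorem \ref{pentathm5.3} is a \emph{pointwise membership criterion}: for a single fixed point $(a,s,p)\in\C^3$ with $(s,p)\in\Gamma$, it asserts the equivalence of five conditions --- $(a,s,p)\in\overline{\mathcal{P}}$; the bound $|a|\le\bigl|1-\tfrac{\frac12 s\overline{\beta}}{1+\sqrt{1-|\beta|^2}}\bigr|$; the bound via $\lambda_1,\lambda_2$; the condition $|\Psi_z(a,s,p)|\le 1$ for all $z\in\D$; and the existence of a contraction $A\in\C^{2\times 2}$ with $\pi(A)=(a,s,p)$. There is no interpolation data, no base point $\lambda_0$, and no function $x:\D\to\overline{\mathcal{P}}$ anywhere in the statement. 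What you have written is instead an outline of the proof of the Schwarz lemma for the pentablock (Theorem \ref{Schwarz-P-inner} in the paper), i.e.\ the equivalence of the solvability of the two-point problem $x(0)=(0,0,0)$, $x(\lambda_0)=(a_0,s_0,p_0)$ with the inequalities \eqref{s0p0h0-2} and \eqref{a0s0h0-2}. None of the five conditions (1)--(5) of the stated theorem is addressed by your argument.

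Moreover, your outline repeatedly \emph{invokes} Theorem \ref{pentathm5.3} --- you cite its part (4) to see that $\lambda\mapsto\Psi_z(x(\lambda))$ is a self-map of $\overline{\D}$, and its equivalence (3)$\Leftrightarrow$(4) to extract the bound on $|a_0|$ --- so even reinterpreted as an attempt at the stated result it would be circular. For the record, Theorem \ref{pentathm5.3} is quoted in this paper from \cite[Theorem 5.3]{ALY2015} without proof; a genuine proof would have to work directly with the operator norm of $2\times 2$ matrices (e.g.\ parametrising the contractions $A$ with prescribed $a_{21}$, trace and determinant, and computing the maximum of $|a_{21}|$ over that set), which is an entirely different kind of argument from the function-theoretic interpolation machinery you describe.
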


\section{The distinguished boundary of $\mathcal{P}$}\label{distingboundpenta}

Let $\Omega$ be a domain in $\Bbb{C}^n$ with closure $\overline{\Omega}$ and let $A(\Omega)$ be the algebra of continuous scalar functions on $\overline{\Omega}$ that are holomorphic on $\Omega$. A \textit{boundary} for $\Omega$ is a subset $C$ of $\overline{\Omega}$ such that every function in $A(\Omega)$ attains its maximum modulus on $C$. 
Since $\overline{\mathcal{P}}$ is polynomially convex, there is a smallest closed boundary of $\mathcal{P}$, contained in all the closed boundaries of $\mathcal{P}$, called the distinguished boundary of $\mathcal{P}$ and denoted by $b\overline{\mathcal{P}}$. If there is a function $g \in A(\mathcal{P})$ and a point $u \in \overline{\mathcal{P}}$ such that $g(u)=1$ and $|{g(x)}|$ $\textless$ 1 for all $x \in \overline{\mathcal{P}} \backslash \{u\}$, then $u$ must belong to $b\overline{\mathcal{P}}$. Such a point $u$ is called a \textit{peak point} of $\overline{\mathcal{P}}$ and the function $g$ a \textit{peaking function} for $u$.

\vspace{5mm}
Define $$K_0 \stackrel{\textup{def}}{=}\bigg\{(a, s, p) \in \Bbb{C}^3 : (s, p) \in b\Gamma, \ |a| = \sqrt{1-\frac{1}{4}|s|^2}\bigg\}.$$
and
\begin{equation}\label{K1}
K_1 \stackrel{\textup{def}}{=}\bigg\{(a, s, p) \in \Bbb{C}^3 : (s, p) \in b\Gamma, \ |a| \leq \sqrt{1-\frac{1}{4}|s|^2}\bigg\}.
\end{equation}

\begin{prop}\label{pentaprop8.3}
\textup{\cite[Proposition 8.3]{ALY2015}} The subsets $K_0$ and $K_1$ of \ $\overline{\mathcal{P}}$ are closed boundaries for $A(\mathcal{P})$.
\end{prop}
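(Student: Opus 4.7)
The plan is to prove first that $K_1$ is a boundary for $A(\mathcal{P})$ by lifting through the polynomial map $\pi$ of \eqref{pi}, and then to upgrade to $K_0$ by a slicing argument using analytic discs in $\mathcal{P}$ that degenerate onto a fibre over a point of $b\Gamma$.

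For $K_1$, given $g \in A(\mathcal{P})$, the composition $g \circ \pi$ lies in $A(\mathbb{B}^{2\times 2})$. The distinguished boundary of the closed operator-norm unit ball is the unitary group $U(2)$, so $|g \circ \pi|$ attains its maximum at some $U_0 \in U(2)$. For any $U \in U(2)$ the eigenvalues lie on $\Bbb{T}$, hence $(\tr U, \det U) \in b\Gamma$ by Proposition \ref{symm-bidisc}. Writing $U = V\, \diag(\lambda_1, \lambda_2) V^*$ with $V \in U(2)$ (spectral theorem), a direct computation using the orthogonality of the rows of $V$ gives $u_{21} = (\lambda_1 - \lambda_2)\,\overline{v}_{11} v_{21}$ with $|v_{11}||v_{21}| \le \tfrac12$; combined with the identity $|\lambda_1 - \lambda_2|^2 = 4 - |\tr U|^2$ this yields $|u_{21}| \le \sqrt{1 - |\tr U|^2/4}$. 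Therefore $\pi(U) \in K_1$, so $K_1$ is a closed boundary for $A(\mathcal{P})$.

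For $K_0$, let $(a_0, s_0, p_0) \in K_1$ be a maximizer of $|g|$ with value $M$; we may assume $|a_0| < \sqrt{1 - |s_0|^2/4}$, otherwise the point already lies in $K_0$. For $r \in (0,1)$ set $s_r = r s_0$, $p_r = r^2 p_0$. Since $(s_0, p_0) \in b\Gamma$ corresponds to eigenvalues on $\Bbb{T}$, the pair $(s_r, p_r)$ corresponds to eigenvalues in $\Bbb{D}$, so $(s_r, p_r) \in \Bbb{G}$. Using the identity $\overline{s}_0 p_0 = s_0$ (Proposition \ref{symm-bidisc}) one computes $\beta_r = (s_r - \overline{s}_r p_r)/(1 - |p_r|^2) = r s_0/(1+r^2)$, and by Theorem \ref{pentathm1.1} the slice $\{a : (a, s_r, p_r) \in \mathcal{P}\}$ is an open disc of radius $\rho_r$. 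With $t = \sqrt{1 - |s_0|^2/4}$, the algebraic identity $1 - (1 - t^2)/(1+t) = t$ gives $\rho_r \to t$ as $r \to 1^{-}$.

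For $r$ close enough to $1$ that $|a_0| < \rho_r$, the function $a \mapsto g(a, s_r, p_r)$ is holomorphic on $\{|a| < \rho_r\}$ and continuous on its closure, so the one-variable maximum modulus principle produces $a_r^*$ with $|a_r^*| = \rho_r$ and $|g(a_r^*, s_r, p_r)| \ge |g(a_0, s_r, p_r)|$. Passing to a subsequence $r_n \to 1$ with $a_{r_n}^* \to a^*$ (then $|a^*| = t$) and using continuity of $g$ on $\overline{\mathcal{P}}$, one obtains $|g(a^*, s_0, p_0)| \ge |g(a_0, s_0, p_0)| = M$, forcing equality at $(a^*, s_0, p_0) \in K_0$. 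The main obstacle is the transition from $K_1$ to $K_0$: one cannot apply maximum modulus directly on the fibre over $(s_0, p_0) \in b\Gamma$, since that fibre sits in $\partial\mathcal{P}$ and $g$ is only continuous, not holomorphic, there. The perturbation $(s_0, p_0) \rightsquigarrow (r s_0, r^2 p_0) \in \Bbb{G}$ coupled with the limit $\rho_r \to \sqrt{1 - |s_0|^2/4}$ is the key device that bridges this gap.
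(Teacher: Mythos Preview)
The paper does not contain a proof of this proposition: it is quoted verbatim from \cite[Proposition 8.3]{ALY2015} and used as an input. So there is no ``paper's own proof'' to compare against.

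That said, your argument is correct and self-contained. The lift $g\circ\pi\in A(\mathbb{B}^{2\times 2})$, the identification of the Shilov boundary of $\overline{\mathbb{B}^{2\times 2}}$ with $U(2)$, and the computation $\pi(U(2))\subset K_1$ give $K_1$ cleanly; the only cosmetic slip is that you invoke ``orthogonality of the rows of $V$'' while the bound $|v_{11}||v_{21}|\le\tfrac12$ actually uses that the first \emph{column} of $V$ has unit norm (row orthogonality is what gives the factorisation $u_{21}=(\lambda_1-\lambda_2)\overline{v}_{11}v_{21}$). For $K_0$, your radial perturbation $(s_0,p_0)\mapsto(rs_0,r^2p_0)$ into $\mathbb{G}$, together with the limit $\rho_r\to\sqrt{1-|s_0|^2/4}$ and a one-variable maximum-modulus argument on the $a$-slice, is exactly the right idea and is carried out correctly. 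This is very much in the spirit of how the original paper \cite{ALY2015} treats the distinguished boundary, using the fibration of $\overline{\mathcal{P}}$ over $\Gamma$ and Theorems \ref{pentathm1.1}--\ref{pentathm5.3}.
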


\begin{thm}\label{pentathm8.4}
\textup{\cite[Theorem 8.4]{ALY2015}} For $x \in \Bbb{C}^3$, the following are equivalent: \\
\textup{(1)} $x \in K_0$; \\
\textup{(2)} $x$ is a peak point of $\overline{\mathcal{P}}$; \\
\textup{(3)} $x \in b\overline{\mathcal{P}}$, the distinguished boundary of $\mathcal{P}$. \\
\newline Therefore
$$b\overline{\mathcal{P}} = \bigg\{(a, s, p) \in \Bbb{C}^3 : (s, p) \in b\Gamma, \ |a| = \sqrt{1-\frac{1}{4}|s|^2}\bigg\}$$
and so
\begin{equation}\label{bP}
b\overline{\mathcal{P}} = \bigg\{(a, s, p) \in \Bbb{C}^3 : |s| \leq 2, \ |p|=1, \ s = \overline{s}p \ and \ |a| = \sqrt{1-\frac{1}{4}|s|^2}\bigg\}.
\end{equation}
\end{thm}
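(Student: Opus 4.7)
The strategy is to close the cycle $(2) \Rightarrow (3) \Rightarrow (1) \Rightarrow (2)$. The first two implications are general function-algebra facts. For $(2) \Rightarrow (3)$: a peak point $x$ with peaking function $g$ must lie in every closed boundary for $A(\mathcal{P})$, since $|g|$ attains its maximum only at $x$; hence $x$ lies in the smallest such boundary, $b\overline{\mathcal{P}}$. For $(3) \Rightarrow (1)$: Proposition~\ref{pentaprop8.3} provides $K_0$ as a closed boundary, so minimality of $b\overline{\mathcal{P}}$ yields $b\overline{\mathcal{P}} \subseteq K_0$.

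The content of the theorem is $(1) \Rightarrow (2)$: for every $u_0 = (a_0,s_0,p_0) \in K_0$ I plan to construct an explicit peaking function $g \in A(\mathcal{P})$ for $u_0$. The first step is to establish the a priori estimate
\begin{equation*}
|a|^2 + \tfrac{1}{4}|s|^2 \;\leq\; 1 \qquad \text{for every } (a,s,p) \in \overline{\mathcal{P}}.
\end{equation*}
To see this, write $(a,s,p) = \pi(A)$ for some contraction $A = \begin{pmatrix}\alpha & \beta \\ a & \delta\end{pmatrix}$; then $A^*A \leq I$ and $AA^* \leq I$ give $|a|^2 \leq 1 - |\alpha|^2$ and $|a|^2 \leq 1 - |\delta|^2$, whence $|a|^2 \leq 1 - \tfrac12(|\alpha|^2 + |\delta|^2)$, while $|s|^2 = |\alpha+\delta|^2 \leq 2(|\alpha|^2 + |\delta|^2)$. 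Since $u_0 \in K_0$ satisfies $|a_0|^2 + |s_0|^2/4 = 1$, the Cauchy-Schwarz inequality applied to the vectors $(a_0, s_0/2)$ and $(a, s/2)$ in $\C^2$ gives
\begin{equation*}
\Bigl|\bar a_0 a + \tfrac{\bar s_0 s}{4}\Bigr| \;\leq\; \sqrt{\bigl(|a_0|^2 + |s_0|^2/4\bigr)\bigl(|a|^2 + |s|^2/4\bigr)} \;\leq\; 1
\end{equation*}
throughout $\overline{\mathcal{P}}$.

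Now let $h \in A(\Gamma)$ be a peaking function for $(s_0,p_0)$ in $\overline{\Gamma}$; such an $h$ exists by the classical theory of the symmetrized bidisc, where every point of $b\Gamma$ is a peak point, with explicit formulas built from convex combinations of maps of the form $\Phi_z(s,p) = (2zp - s)/(2 - zs)$ for suitable $z \in \T$. Define
\begin{equation*}
g(a,s,p) \;=\; \tfrac12\Bigl(\bar a_0 a + \tfrac{\bar s_0 s}{4} + h(s,p)\Bigr).
\end{equation*}
Since $(a,s,p) \mapsto (s,p)$ maps $\overline{\mathcal{P}}$ into $\Gamma$ by Remark~\ref{PGrelated}, one has $g \in A(\mathcal{P})$. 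A direct computation gives $g(u_0) = \tfrac12(|a_0|^2 + |s_0|^2/4 + 1) = 1$, and the triangle inequality combined with the two bounds above gives $|g| \leq 1$ on $\overline{\mathcal{P}}$. For strict inequality off $u_0$: if $|g(x)| = 1$, then rigidity of $|X+Y| = 2$ with $|X|, |Y| \leq 1$ forces $\bar a_0 a + \bar s_0 s/4 = h(s,p) = 1$; the peaking property of $h$ then gives $(s,p) = (s_0,p_0)$, and the equality case of Cauchy-Schwarz forces $(a, s/2) = (a_0, s_0/2)$, so $x = u_0$. The final characterization of $b\overline{\mathcal{P}}$ in the theorem statement follows from the description of $b\Gamma$ in Proposition~\ref{symm-bidisc}.

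The main obstacle in this approach is producing the peaking function $h$ on $\overline{\Gamma}$; this is a standard but non-trivial construction, using the parametrization $b\Gamma = \{(z+w, zw) : z, w \in \T\}$ and explicit M\"obius-type functions on the symmetrized bidisc. Once $h$ is in hand, the remainder of the argument reduces to the Cauchy-Schwarz inequality and the a priori matrix bound $|a|^2 + |s|^2/4 \leq 1$ established in the first step.
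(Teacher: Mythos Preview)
The paper does not prove this theorem; it is quoted verbatim from \cite[Theorem~8.4]{ALY2015} as background and is followed immediately by another cited result with no intervening proof. So there is no ``paper's own proof'' in this document to compare your attempt against.

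That said, your argument is correct and self-contained. The implications $(2)\Rightarrow(3)\Rightarrow(1)$ are, as you note, general Shilov-boundary facts together with Proposition~\ref{pentaprop8.3}. For $(1)\Rightarrow(2)$ your construction works: the a~priori inequality $|a|^2+\tfrac14|s|^2\le 1$ on $\overline{\mathcal{P}}$ follows exactly as you derive it from the column and row norms of a contractive $A$ with $\pi(A)=(a,s,p)$; then averaging the linear functional $(a,s,p)\mapsto \bar a_0 a+\tfrac14\bar s_0 s$ (which has modulus at most $1$ on $\overline{\mathcal{P}}$ by Cauchy--Schwarz, with equality only on the ray through $(a_0,s_0/2)$) with a $\Gamma$-peaking function $h$ for $(s_0,p_0)$ produces a genuine peaking function for $u_0$. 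Your rigidity step is also fine: $|X+Y|=2$ with $|X|,|Y|\le 1$ forces $X=Y$ with $|X|=1$; the peaking property of $h$ then gives $(s,p)=(s_0,p_0)$ and $Y=1$, hence $X=1$, and the equality case of Cauchy--Schwarz (with the additional fact that $\|(a,s/2)\|\le 1$) yields $(a,s/2)=(a_0,s_0/2)$.

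The only external input you invoke is that every point of $b\Gamma$ is a peak point of $\Gamma$; this is indeed known (see e.g.\ \cite{AY2008}), so the argument is complete.
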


\begin{thm}\label{pentathm8.5}
\textup{\cite[Theorem 8.5]{ALY2015}} The distinguished boundary $b\overline{\mathcal{P}}$ is homeomorphic to
$$\{(\sqrt{1-x^2}w, x, \theta) : -1 \leq x \leq 1, \ 0 \leq \theta \leq 2\pi , \ w \in \Bbb{T}\}$$
with the two points $(\sqrt{1-x^2}w, x, 0)$ and $(\sqrt{1-x^2}w, -x, 2\pi)$ identified for every $w \in \Bbb{T}$ and $x \in [-1, 1]$.
\end{thm}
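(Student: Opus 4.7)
The plan is to exhibit an explicit homeomorphism from the quotient parameter set to $b\overline{\mathcal{P}}$, using the description of $b\overline{\mathcal{P}}$ established in Theorem \ref{pentathm8.4}. Write
$$\Phi : \mathbb{T}\times[-1,1]\times[0,2\pi] \longrightarrow b\overline{\mathcal{P}}, \qquad \Phi(w,x,\theta)=\bigl(\sqrt{1-x^2}\,w,\; 2x\,e^{\ii\theta/2},\; e^{\ii\theta}\bigr).$$
First I would check that $\Phi$ lands in $b\overline{\mathcal{P}}$: with $a=\sqrt{1-x^2}\,w$, $s=2xe^{\ii\theta/2}$, $p=e^{\ii\theta}$, we have $|p|=1$, $|s|=2|x|\le 2$, $\bar{s}p = 2xe^{-\ii\theta/2}\cdot e^{\ii\theta}=s$, and $|a|^2=1-x^2=1-\tfrac14|s|^2$, so all defining equalities of $b\overline{\mathcal{P}}$ hold. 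Continuity is obvious.

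Second I would verify surjectivity by reversing the parametrisation. Given $(a,s,p)\in b\overline{\mathcal{P}}$, pick the unique $\theta\in[0,2\pi)$ with $p=e^{\ii\theta}$; then $s=\bar s p$ with $|p|=1$ forces $s\,e^{-\ii\theta/2}$ to be real, and since $|s|\le 2$ we may write this real number as $2x$ with $x\in[-1,1]$, giving $s=2xe^{\ii\theta/2}$. Finally $|a|=\sqrt{1-x^2}$, so $a=\sqrt{1-x^2}\,w$ for some $w\in\mathbb{T}$, and $(w,x,\theta)$ is a preimage.

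The heart of the argument is to determine exactly the fibres of $\Phi$. Equating third coordinates in $\Phi(w_1,x_1,\theta_1)=\Phi(w_2,x_2,\theta_2)$ forces either $\theta_1=\theta_2$ inside $[0,2\pi)$, or $\{\theta_1,\theta_2\}=\{0,2\pi\}$. In the first case, cancelling the common factor $e^{\ii\theta/2}\ne 0$ in the second coordinate gives $x_1=x_2$, and then comparing first coordinates gives $w_1=w_2$ whenever $|x_1|<1$ (while if $|x_1|=1$ the value of $w$ is immaterial, which is already built into the presentation of the target set as $\{(\sqrt{1-x^2}w,x,\theta):\dots\}$). In the second case, say $\theta_1=0,\theta_2=2\pi$, the second coordinates give $2x_1=-2x_2$, hence $x_1=-x_2$, and then $\sqrt{1-x_1^2}\,w_1=\sqrt{1-x_2^2}\,w_2$ yields $w_1=w_2$ (again, modulo the $|x_i|=1$ redundancy). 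Thus the fibres are exactly $\{(w,x,\theta)\}$ together with the extra identification $(w,x,0)\sim(w,-x,2\pi)$, which matches the relation prescribed in the theorem.

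Finally, $\mathbb{T}\times[-1,1]\times[0,2\pi]$ is compact and $b\overline{\mathcal{P}}\subset\mathbb{C}^3$ is Hausdorff, so the induced continuous bijection from the quotient to $b\overline{\mathcal{P}}$ is automatically a homeomorphism. The least routine part is the bookkeeping in the fibre analysis: one must be sure that $\Phi$ effects exactly the one identification stated and no extra collapses. Double-checking the degenerate loci $x=\pm 1$ (where $a=0$ and $w$ drops out) and $\theta\in\{0,2\pi\}$ (the "seam" along which the $\theta\mapsto e^{\ii\theta/2}$ branch fails to close up) against the description of the parameter set in the theorem statement settles this, since in both degeneracies the set $\{(\sqrt{1-x^2}w,x,\theta):\dots\}$ together with the stated identification already encodes precisely those redundancies.
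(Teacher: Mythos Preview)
The paper does not actually prove this theorem: it is quoted verbatim from \cite[Theorem~8.5]{ALY2015} and no argument is supplied. Consequently there is no ``paper's own proof'' to compare against.

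That said, your argument is correct. The map $\Phi(w,x,\theta)=(\sqrt{1-x^2}\,w,\,2xe^{\ii\theta/2},\,e^{\ii\theta})$ does land in $b\overline{\mathcal{P}}$ by the description in Theorem~\ref{pentathm8.4}, and your fibre analysis is accurate: the only non-injectivity comes from the seam $\{\theta_1,\theta_2\}=\{0,2\pi\}$, which forces $x_1=-x_2$ and $w_1=w_2$, and from the degeneracy $|x|=1$, which is already absorbed in the \emph{set} $\{(\sqrt{1-x^2}w,x,\theta)\}$ since $(0,\pm 1,\theta)$ does not depend on $w$. One small polish: rather than working from the parameter cube $\mathbb{T}\times[-1,1]\times[0,2\pi]$, it is cleaner to define the map directly on the set $S=\{(\sqrt{1-x^2}w,x,\theta)\}\subset\mathbb{C}\times\mathbb{R}\times\mathbb{R}$ via $(c,x,\theta)\mapsto(c,\,2xe^{\ii\theta/2},\,e^{\ii\theta})$; this makes it transparent that the $|x|=1$ collapse is a feature of $S$ itself and not an additional identification you need to track. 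The compact--Hausdorff step then applies verbatim.
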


\section{The royal variety of $\mathcal{P}$ and Aut $\mathcal{P}$}\label{singsetpenta}

Recall that $\mathcal{P}=\pi(\Bbb{B}^{2\times2})$ where $\pi:\Bbb{C}^{2\times2}\rightarrow \Bbb{C}^3$ is defined as 
\begin{equation}\label{piA}
\pi:A \mapsto (a_{21}, \textup{tr }A, \textup{det }A).
\end{equation}
We define the singular set of $\mathcal{P}=\pi(\Bbb{B}^{2\times2})$ to be the image under $\pi$ of the set of critical points of $\pi$.

\begin{prop}\label{singsetpentaprop}
The singular set of the pentablock is $\mathcal{R}_{\mathcal{P}}=\{(0, s, p )\in \mathcal{P}: s^2=4p\}$.
\end{prop}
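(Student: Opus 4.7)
The plan is a direct Jacobian calculation for the polynomial map $\pi$, followed by a description of its image. Using coordinates $(a_{11}, a_{12}, a_{21}, a_{22})$ on $\Bbb{C}^{2\times 2}$ and the formula $\pi(A)=(a_{21},\, a_{11}+a_{22},\, a_{11}a_{22}-a_{12}a_{21})$, the Jacobian of $\pi$ at a matrix $A$ is the $3\times 4$ matrix
$$J(A) \;=\; \begin{pmatrix} 0 & 0 & 1 & 0 \\ 1 & 0 & 0 & 1 \\ a_{22} & -a_{21} & -a_{12} & a_{11} \end{pmatrix}.$$
A matrix $A\in \Bbb{B}^{2\times 2}$ is a critical point of $\pi$ exactly when $\rank J(A) < 3$, i.e.\ when every $3\times 3$ minor of $J(A)$ vanishes.

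Next I compute the four $3\times 3$ minors obtained by deleting one column of $J(A)$. A row expansion shows: the minor from columns $\{1,2,3\}$ equals $-a_{21}$; the minor from $\{1,2,4\}$ vanishes identically (its first row is zero); the minor from $\{1,3,4\}$ equals $a_{22}-a_{11}$; and the minor from $\{2,3,4\}$ equals $-a_{21}$. Hence all four minors vanish if and only if $a_{21}=0$ and $a_{11}=a_{22}$, so the critical set of $\pi\big|_{\Bbb{B}^{2\times 2}}$ is
$$\mathcal{C} \;=\; \bigl\{A\in \Bbb{B}^{2\times 2} : a_{21}=0 \text{ and } a_{11}=a_{22}\bigr\}.$$

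Finally I identify $\pi(\mathcal{C})$. If $A\in\mathcal{C}$ and $\lambda := a_{11}=a_{22}$, then regardless of the value of $a_{12}$ we get $\pi(A)=(0,\,2\lambda,\,\lambda^2)$, which is of the form $(0,s,p)$ with $s^2=4p$, and moreover $(0,s,p)\in\mathcal{P}$ because $A\in\Bbb{B}^{2\times 2}$. Conversely, given any $(0,s,p)\in\mathcal{P}$ with $s^2=4p$, set $\lambda=s/2$; the polynomial $w^2-sw+p$ has the double root $\lambda$, and by Remark \ref{PGrelated} the fibration $(a,s,p)\mapsto(s,p)$ yields $(s,p)\in \Bbb{G}$, forcing $|\lambda|<1$. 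The scalar matrix $A=\lambda I$ then lies in $\mathcal{C}$ since $\|\lambda I\|=|\lambda|<1$, and $\pi(A)=(0,s,p)$. Therefore $\pi(\mathcal{C})=\{(0,s,p)\in\mathcal{P}:s^2=4p\}=\mathcal{R}_{\mathcal{P}}$, as claimed.

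The argument is essentially a linear-algebra exercise and I expect no genuine obstacle; the only point that requires care is the ``conversely'' step, where one must produce a critical matrix inside the open ball $\Bbb{B}^{2\times 2}$ rather than merely in its closure, and this is precisely where the hypothesis $(0,s,p)\in\mathcal{P}$ (rather than $\overline{\mathcal{P}}$) is used.
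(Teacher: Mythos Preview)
Your proof is correct and follows essentially the same approach as the paper: compute the Jacobian of $\pi$, determine when all $3\times 3$ minors vanish (yielding $a_{21}=0$ and $a_{11}=a_{22}$), and identify the image of the critical set. Your explicit ``conversely'' step, producing the scalar matrix $\lambda I\in\Bbb{B}^{2\times 2}$, is a small but welcome addition---the paper simply asserts that the image of $\{\begin{bmatrix} a & * \\ 0 & a\end{bmatrix}\in\Bbb{B}^{2\times 2}\}$ under $\pi$ equals $\{(0,2a,a^2):a\in\Bbb{D}\}$ without exhibiting a preimage in the open ball.
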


\begin{proof} The set of critical points of $\pi$ is the set
$\pi(\{A \in \Bbb{B}^{2\times2}: \mathbf{J}_{\pi}(A)$ is not of full rank\}), where $\mathbf{J}_{\pi}(A)$ is the Jacobian matrix of $\pi$. \\

The Jacobian matrix of $\pi$, $\mathbf{J}_{\pi}(A)$, is defined by

\begin{equation*} 
\mathbf{J}_{\pi}(A)=
\begin{bmatrix}
\dfrac{\partial \pi_1}{\partial a_{11}} & \dfrac{\partial \pi_1}{\partial a_{12}} & \dfrac{\partial \pi_1}{\partial a_{21}} & \dfrac{\partial \pi_1}{\partial a_{22}} \\ \\ \dfrac{\partial \pi_2}{\partial a_{11}} & \dfrac{\partial \pi_2}{\partial a_{12}} & \dfrac{\partial \pi_2}{\partial a_{21}} & \dfrac{\partial \pi_2}{\partial a_{22}} \\ \\ \dfrac{\partial \pi_3}{\partial a_{11}} &
\dfrac{\partial \pi_3}{\partial a_{12}} & \dfrac{\partial \pi_3}{\partial a_{21}} & \dfrac{\partial \pi_3}{\partial a_{22}} &
\end{bmatrix}\;\; \text{for} \;\; A=
\begin{bmatrix}
a_{11} & a_{12} \\ a_{21} & a_{22}
\end{bmatrix} \in \Bbb{C}^{2 \times 2}.
\end{equation*}
Thus,
\begin{equation*}
\mathbf{J}_\pi(A)=
\begin{bmatrix}
0 & 0 & 1 & 0 \\ 1 & 0 & 0 & 1 \\ a_{22} & -a_{21} & -a_{12} & a_{11}
\end{bmatrix}.
\end{equation*}
Note that $\mathbf{J}_{\pi}(A)$ is not of full rank if and only if rank $\mathbf{J}_{\pi}(A) \leq 2$. That means all $3\times 3$ minors of $\mathbf{J}_{\pi}(A)$ are zero. Let us find all $3\times 3$ minors of $\mathbf{J}_\pi(A)$. 
\begin{eqnarray*}
\begin{vmatrix}
 0 & 0 & 1\\     1 & 0 & 0 \\ a_{22} & -a_{21} & -a_{12} 
\end{vmatrix}=-a_{21},
 & ~~ &
\begin{vmatrix}
 0 & 1 & 0\\0 & 0 & 1 \\ -a_{21} & -a_{12} & a_{11} 
\end{vmatrix}=-a_{21},
\end{eqnarray*}
\begin{eqnarray*}
\begin{vmatrix}
 0 & 0 & 0\\1 & 0 & 1 \\ a_{22} & -a_{21} & a_{11} 
\end{vmatrix}=0,
 & ~~ &
\begin{vmatrix}
0 & 1 & 0\\1 & 0 & 1 \\ a_{22} & -a_{12} & a_{11} 
\end{vmatrix}=-a_{11}+a_{22}.
\end{eqnarray*}
Thus $\mathbf{J}_\pi(A)$ is not of full rank if and only if $a_{21}=0$ and $a_{11}=a_{22}$.
Therefore, \begin{eqnarray*}
\mathcal{R}_{\mathcal{P}} = \pi(\{A \in \Bbb{B}^{2\times 2}:\mathbf{J}_\pi(A) \textrm{ is not of full rank} \}) & = & \pi \Bigg( A = \begin{bmatrix} a & * \\ 0 & a \end{bmatrix} \in \Bbb{B}^{2\times 2} \Bigg)\\
& = & \bigg\{ \Big(0, 2a, a^2 \Big): a \in \Bbb{D}\bigg\}\\
& = & \bigg\{ \Big(0, s, p \Big): (s, p)\in \Bbb{G}, s^2=4p\bigg\}.
\end{eqnarray*}
Here $s=$ tr $A$ and $p=$ det $A$.
\end{proof}

\begin{rem}
The singular set of the pentablock can be presented as $$\mathcal{R}_{\mathcal{P}} = \{(0, s, p) \in \mathcal{P} : (s, p) \in \mathcal{R}_\Gamma \cap  \mathcal{G}\}.$$ 
\end{rem}

\noindent By analogy with the established terminology for the symmetrized bidisc, we shall call the set 
$$ \mathcal{R}_{\overline{\mathcal{P}}}= \{(0, s, p) \in \C^3 : s^2= 4p\}.$$
the {\em royal variety} of the pentablock.

\begin{lem}\label{a=0iff}
Let $(a, s, p) \in b\overline{\mathcal{P}}$. Then the following conditions are equivalent:\\
 {\rm (i)}  $a = 0$;\\
 {\rm (ii)}  $(a, s, p) \in b\overline{\mathcal{P}} \cap \mathcal{R}_{\overline{\mathcal{P}}}$;\\
 {\rm (iii)}  $|s|=2$.
\end{lem}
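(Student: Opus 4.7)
The plan is to unpack each condition using the explicit description of $b\overline{\mathcal{P}}$ from Theorem \ref{pentathm8.4} and the defining equation $s^2=4p$ of $\mathcal{R}_{\overline{\mathcal{P}}}$, then verify a short cycle of implications such as (i)$\Leftrightarrow$(iii) and (i)$\Leftrightarrow$(ii). None of the steps require more than elementary manipulations of the boundary identities.

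First I would establish the equivalence (i)$\Leftrightarrow$(iii). Since $(a,s,p)\in b\overline{\mathcal{P}}$, Theorem \ref{pentathm8.4} gives $|a|=\sqrt{1-\tfrac14|s|^2}$. Thus $a=0$ if and only if $1-\tfrac14|s|^2=0$, which (combined with the constraint $|s|\le 2$ from the same theorem) is equivalent to $|s|=2$. This takes one line.

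Next I would do (i)$\Leftrightarrow$(ii). The implication (ii)$\Rightarrow$(i) is immediate from the definition $\mathcal{R}_{\overline{\mathcal{P}}}=\{(0,s,p):s^2=4p\}$, since any point in $\mathcal{R}_{\overline{\mathcal{P}}}$ has first coordinate $0$. For (i)$\Rightarrow$(ii), assume $a=0$; by the step above we know $|s|=2$. Using $|p|=1$ and $s=\bar{s}p$ from the description of $b\overline{\mathcal{P}}$, I would write $s=2\e^{\ii\theta}$ for some $\theta\in\R$, so that $\bar{s}=2\e^{-\ii\theta}$ and the relation $s=\bar{s}p$ forces $p=\e^{2\ii\theta}$; then $s^2=4\e^{2\ii\theta}=4p$. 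Hence $(a,s,p)=(0,s,p)$ satisfies $s^2=4p$, i.e.\ lies in $b\overline{\mathcal{P}}\cap\mathcal{R}_{\overline{\mathcal{P}}}$. Alternatively, one can argue coordinate-freely: from $s=\bar{s}p$ and $|p|=1$ one gets $s^2=s\bar{s}p=|s|^2 p=4p$ when $|s|=2$, which is even cleaner and avoids choosing an angle.

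There is essentially no obstacle here; the argument is a direct bookkeeping exercise on the explicit descriptions of $b\overline{\mathcal{P}}$ and $\mathcal{R}_{\overline{\mathcal{P}}}$. The only mild subtlety to keep in mind is that the identity $|s|^2=s\bar s$ lets one dispatch (i)$\Rightarrow$(ii) without introducing an auxiliary parameter, which I would prefer for the final write-up.
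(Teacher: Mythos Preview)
Your proof is correct and is exactly the approach the paper indicates: the paper's proof is the single sentence ``It easily follows from the definition of $\mathcal{R}_{\overline{\mathcal{P}}}$ and the formula \eqref{bP} for $b\overline{\mathcal{P}}$,'' and you have simply written out those details. The coordinate-free computation $s^2=s\bar s\,p=|s|^2p=4p$ is the clean way to finish (i)$\Rightarrow$(ii).
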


\begin{proof}  It easily follows from the definition of 
$\mathcal{R}_{\overline{\mathcal{P}}}$ and the formula \eqref{bP} for $b\overline{\mathcal{P}}$.
\end{proof}

\vspace{5mm}
{\bf The automorphism group of $\mathcal{P}$}.
Recall the known information on the automorphism group Aut $\mathcal{P}$ of $\mathcal{P}$ from \cite{ALY2015}.
For $w \in \Bbb{T}$ and $v \in \textup{Aut} \ \Bbb{D}$, let
\begin{equation}\label{fwv}
f_{wv}(a, s, p) = \Big(\frac{w\eta(1-|\alpha|^2)a}{1-\overline{\alpha}s+\overline{\alpha}^2p}, \tau_v(s, p) \Big)
\end{equation}
where $v=\eta B_\alpha$ for $\alpha \in \Bbb{D}$, $\eta \in \Bbb{T}$, $B_\alpha(z) = \dfrac{z-\alpha}{1-\overline{\alpha}z}$ is a Blaschke factor and $\tau_v \in \textup{Aut} \ \Bbb{G}$ is defined by
$$\tau_v(z+w, zw) = \big(v(z)+v(w), v(z)v(w)\big).$$
\index{$f_{wv}(a, s, p)$}

\begin{thm}\label{pentathm7.1}
\textup{\cite[Theorem 7.1]{ALY2015}} The maps $f_{wv}$, for $w \in \Bbb{T}$ and $v \in \textup{Aut} \ \Bbb{D}$, constitute a group of automorphisms of $\mathcal{P}$ under composition. Each automorphism $f_{wv}$ extends analytically to a neighbourhood of $\overline{\mathcal{P}}$. \\
Moreover, for all $w_1, w_2 \in \Bbb{T}$, $v_1, v_2 \in \textup{Aut} \ \Bbb{D}$,
$$f_{w_1v_1} \circ f_{w_2v_2} = f_{(w_1w_2)(v_1\circ v_2)},$$
and, for all $w \in \Bbb{T}$, $v \in \textup{Aut} \ \Bbb{D}$,
$$(f_{wv})^{-1} = f_{\overline{w}v^{-1}}.$$ 
\end{thm}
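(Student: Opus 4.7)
The plan is to lift each candidate map $f_{wv}$ to a genuine biholomorphism of the open operator ball $\mathbb{B}^{2\times 2}$ and descend it through the projection $\pi$ of \eqref{piA}; this will simultaneously supply that $f_{wv}$ is holomorphic, carries $\mathcal{P}$ into $\mathcal{P}$, and obeys the asserted composition law. The automorphism property will then follow automatically from the composition and inverse formulas.

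For $v = \eta B_\alpha \in \mathrm{Aut}\,\mathbb{D}$, introduce the matricial M\"obius transformation
\[
h_v : \mathbb{B}^{2\times 2} \to \mathbb{B}^{2\times 2}, \qquad h_v(A) = \eta\,(A - \alpha I)(I - \overline{\alpha}A)^{-1},
\]
a standard automorphism of $\mathbb{B}^{2\times 2}$ whose effect on the spectrum of $A$ is coordinatewise application of $v$; consequently $(\mathrm{tr}\, h_v(A), \det h_v(A)) = \tau_v(s,p)$ when $\pi(A) = (a,s,p)$. For $w \in \mathbb{T}$, let $D_w = \mathrm{diag}(1,w)$ and set $g_w(A) = D_w A D_w^{-1}$; since $D_w$ is unitary, $g_w$ is an isometric automorphism of $\mathbb{B}^{2\times 2}$, and clearly $\pi(g_w(A)) = (w a_{21}, s, p)$. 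A direct calculation using $\det(I - \overline{\alpha}A) = 1 - \overline{\alpha}s + \overline{\alpha}^2 p$ together with
\[
[(A - \alpha I)\,\mathrm{adj}(I - \overline{\alpha}A)]_{21} = (1 - |\alpha|^2)\,a_{21}
\]
yields $\pi(g_w(h_v(A))) = f_{wv}(\pi(A))$, so $f_{wv}$ is a well-defined holomorphic self-map of $\mathcal{P}$.

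The two lifted families commute, because $D_w(A - \alpha I)D_w^{-1} = g_w(A) - \alpha I$ and $D_w(I - \overline{\alpha}A)D_w^{-1} = I - \overline{\alpha}\,g_w(A)$, giving $g_w\circ h_v = h_v \circ g_w$. Moreover $g_{w_1}\circ g_{w_2} = g_{w_1 w_2}$ trivially, and the classical matricial M\"obius identity delivers $h_{v_1}\circ h_{v_2} = h_{v_1 \circ v_2}$. Combining these identities on $\mathbb{B}^{2\times 2}$ and pushing through $\pi$ yields
\[
f_{w_1 v_1}\circ f_{w_2 v_2} = f_{(w_1 w_2)(v_1 \circ v_2)}.
\]
Specialising $(w_2, v_2) = (\overline{w},\, v^{-1})$ gives $f_{wv}\circ f_{\overline{w}v^{-1}} = f_{1,\,\mathrm{id}} = \mathrm{id}_{\mathcal{P}}$, which simultaneously proves $(f_{wv})^{-1} = f_{\overline{w}v^{-1}}$ and upgrades each $f_{wv}$ from a self-map to a biholomorphism.

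For the analytic extension to a neighborhood of $\overline{\mathcal{P}}$, the only possible singularity of $f_{wv}$ comes from the denominator $1 - \overline{\alpha}s + \overline{\alpha}^2 p$, which factors as $(1 - \overline{\alpha}\lambda_1)(1 - \overline{\alpha}\lambda_2)$ along the fibration $\overline{\mathcal{P}} \to \Gamma$ via $(s,p) = (\lambda_1 + \lambda_2, \lambda_1\lambda_2)$ with $|\lambda_i| \le 1$; this is bounded below by $(1 - |\alpha|)^2 > 0$ on $\Gamma$ and hence, by continuity, on an open neighborhood in $\mathbb{C}^3$. The main obstacle in this program is the verification of the matricial M\"obius composition identity $h_{v_1}\circ h_{v_2} = h_{v_1\circ v_2}$: it is a classical but computationally heavy algebraic calculation. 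In a detailed write-up I would either invoke it from the ball-automorphism literature, or verify it first on diagonal $A$ (where it reduces to two parallel scalar M\"obius compositions on the eigenvalues) and then extend by density of diagonalisable matrices together with the fact that both sides of the identity are rational and well defined on a common open set containing $\mathbb{B}^{2\times 2}$.
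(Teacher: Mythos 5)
This theorem is stated in the paper purely as an imported result: it is quoted from \cite[Theorem 7.1]{ALY2015} and no proof is given in the present paper, so there is no in-house argument to compare against and your write-up stands as a self-contained proof. It is correct. The key move --- lifting $f_{wv}$ through $\pi$ to the genuine ball automorphism $g_w\circ h_v$ of $\mathbb{B}^{2\times 2}$ --- does all the work at once: the entry computation $[(A-\alpha I)\,\mathrm{adj}(I-\overline{\alpha}A)]_{21}=(1-|\alpha|^2)a_{21}$ together with $\det(I-\overline{\alpha}A)=1-\overline{\alpha}s+\overline{\alpha}^2p$ correctly yields $\pi\circ g_w\circ h_v=f_{wv}\circ\pi$, surjectivity of $\pi$ onto $\mathcal{P}$ transports the group law downstairs, and specialising to $(\overline{w},v^{-1})$ gives invertibility and hence the automorphism property. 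The one step you flag as heavy, $h_{v_1}\circ h_{v_2}=h_{v_1\circ v_2}$, is lighter than you fear: since $A-\alpha I$ commutes with $(I-\overline{\alpha}A)^{-1}$, $h_v(A)$ is exactly $v(A)$ in the rational functional calculus, and $v_1(v_2(A))=(v_1\circ v_2)(A)$ is the standard composition property of that calculus (the poles of the $v_i$ lie outside $\overline{\mathbb{D}}$ while all spectra stay in $\mathbb{D}$), so no density argument is needed. Your extension argument is also sound, but you should add the one-line observation that the last two components $\tau_v(s,p)$ are likewise rational in $(s,p)$ with the \emph{same} denominator $1-\overline{\alpha}s+\overline{\alpha}^2p$ (for instance the first component of $\tau_v(s,p)$ equals $\eta\bigl((1+|\alpha|^2)s-2\alpha-2\overline{\alpha}p\bigr)/(1-\overline{\alpha}s+\overline{\alpha}^2p)$), so that the lower bound $(1-|\alpha|)^2$ on $\Gamma$, together with compactness of $\overline{\mathcal{P}}$, really does give analyticity of the entire map $f_{wv}$, and not just of its first coordinate, on a neighbourhood of $\overline{\mathcal{P}}$.
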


\vspace{5mm}
L. Kosi\'nski proved in \cite{Kos2015} that the set $\{f_{wv}:w \in \Bbb{T}, v \in \textup{Aut} \ \Bbb{D}\}$ is the full group of automorphisms of $\mathcal{P}$.

\vspace{5mm}
\begin{lem}\label{singsetandAutpenta}
$\mathcal{R}_{\overline{\mathcal{P}}} \cap \mathcal{P}$ is invariant under \textup{Aut} $\mathcal{P}$.
\end{lem}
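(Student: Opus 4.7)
The plan is to use the explicit description of $\text{Aut}\,\mathcal{P}$ together with Lemma \ref{AutGonroyalvar} on the invariance of $\mathcal{R}_\Gamma \cap \mathbb{G}$ under $\text{Aut}\,\mathbb{G}$. By Kosi\'nski's theorem cited just above the lemma, every automorphism of $\mathcal{P}$ has the form $f_{wv}$ with $w\in\mathbb{T}$ and $v\in\text{Aut}\,\mathbb{D}$, so it suffices to check invariance of $\mathcal{R}_{\overline{\mathcal{P}}}\cap\mathcal{P}$ under each $f_{wv}$.

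First I would take an arbitrary $(a,s,p)\in\mathcal{R}_{\overline{\mathcal{P}}}\cap\mathcal{P}$, so by definition $a=0$ and $s^2=4p$, and by Remark \ref{PGrelated} the point $(s,p)$ lies in $\mathbb{G}$, so in fact $(s,p)\in\mathcal{R}_\Gamma\cap\mathbb{G}$. Plugging $a=0$ into the formula \eqref{fwv} for $f_{wv}$ gives
\[
f_{wv}(0,s,p)=\left(\frac{w\eta(1-|\alpha|^2)\cdot 0}{1-\overline{\alpha}s+\overline{\alpha}^2 p},\,\tau_v(s,p)\right)=\bigl(0,\tau_v(s,p)\bigr),
\]
where the denominator does not vanish because $f_{wv}$ is a well-defined automorphism on $\mathcal{P}$ and, by Theorem \ref{pentathm7.1}, on a neighbourhood of $\overline{\mathcal{P}}$.

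Next I would invoke Lemma \ref{AutGonroyalvar}: since $\tau_v\in\text{Aut}\,\mathbb{G}$, it maps $\mathcal{R}_\Gamma\cap\mathbb{G}$ onto itself, so $\tau_v(s,p)=(s',p')$ satisfies $(s')^2=4p'$ and $(s',p')\in\mathbb{G}$. Writing $f_{wv}(0,s,p)=(0,s',p')$, this point lies in $\mathcal{P}$ (because $f_{wv}$ is an automorphism of $\mathcal{P}$) and satisfies the defining equation $0=0$ and $(s')^2=4p'$ of the royal variety. Hence $f_{wv}(0,s,p)\in\mathcal{R}_{\overline{\mathcal{P}}}\cap\mathcal{P}$, which gives the required invariance.

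There is no real obstacle here: the argument is essentially a two-line reduction of the claim to Lemma \ref{AutGonroyalvar}, once one notices that the first coordinate of $f_{wv}$ has $a$ as a linear factor in the numerator and so vanishes whenever $a=0$. The only subtlety to flag in the write-up is the fibration remark from Remark \ref{PGrelated}, which justifies transferring the question on $\mathcal{R}_{\overline{\mathcal{P}}}\cap\mathcal{P}$ to one on $\mathcal{R}_\Gamma\cap\mathbb{G}$, and the appeal to Kosi\'nski's theorem to ensure that the family $\{f_{wv}\}$ exhausts $\text{Aut}\,\mathcal{P}$.
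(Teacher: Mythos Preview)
Your proposal is correct and follows essentially the same approach as the paper: both observe that a point of $\mathcal{R}_{\overline{\mathcal{P}}}\cap\mathcal{P}$ has the form $(0,s,p)$ with $(s,p)\in\mathcal{R}_\Gamma\cap\mathbb{G}$, compute $f_{wv}(0,s,p)=(0,\tau_v(s,p))$ from \eqref{fwv}, and then invoke Lemma~\ref{AutGonroyalvar}. Your write-up is slightly more careful in explicitly citing Kosi\'nski's theorem to cover all of $\mathrm{Aut}\,\mathcal{P}$ and in noting why the denominator is nonzero, but the argument is the same.
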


\begin{proof}
Every element of $\mathcal{R}_{\overline{\mathcal{P}}} \cap \mathcal{P}$ is of the form $(0, s, p) \in \mathcal{P}$ where $s^2 = 4p$.  It is easy to see that, for every  
element $f_{wv}$ of Aut $\mathcal{P}$ given by the equation \eqref{fwv}, 
$$f_{wv}(0, s, p) = \Big(0, \tau_v(s, p) \Big).$$
Since $\tau_v \in \textup{Aut} \ \Bbb{G}$ and $(s, p)\in \mathcal{R}_\Gamma \cap \mathbb{G}$, by Lemma \ref{AutGonroyalvar}, $\tau_v(s, p) \in \mathcal{R}_\Gamma \cap \mathbb{G}$.
Therefore, $f_{wv}(0, s, p) \in \mathcal{R}_{\overline{\mathcal{P}}}.$
\end{proof}

\vspace*{0.3cm}

For any domain  $U$ in $\Bbb{C}^{n}$, $ \text{Hol}(\Bbb{D}, U)$ denotes the space of analytic functions from $\Bbb{D}$ to $U$.

\begin{defn}\label{Complexgeodesic}
Let $U$ be a domain in $\Bbb{C}^n$ and let $\mathcal{D} \subset U$. We say $\mathcal{D}$ is a complex geodesic in $U$ if there exists a function $k \in \textup{Hol}(\Bbb{D}, U)$ and a function $C \in \textup{Hol}(U, \Bbb{D})$ such that $C \circ k = \id_{\Bbb{D}}$ and $\mathcal{D} = k(\Bbb{D})$.
\end{defn}

For a geometric classification of complex geodesics in the symmetrized bidisc $ \Bbb{G}$, see  \cite{ALYMEM}.
We define, for $ \omega \in \T $, the rational function $\Phi_\omega$ of two variables by
$$ \Phi_\omega(s, p) = \frac{2\omega p-s}{2-\omega s}, \; \; \text{where} \;\; \omega s \neq 2 .$$
In the function theory and geometry of $ \Bbb{G}$ much depends on the properties of these functions $\Phi_\omega,\; \omega \in \T$, see \cite{AY2008}. 

\begin{lem}\label{Rpcompgeo}
$\mathcal{R}_{\overline{\mathcal{P}}} \cap \mathcal{P}$ is a complex geodesic in $\mathcal{P}$.
\end{lem}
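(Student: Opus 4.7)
The plan is to exhibit explicit maps $k \in \hol(\D, \mathcal{P})$ and $C \in \hol(\mathcal{P}, \D)$ satisfying Definition \ref{Complexgeodesic}, with $k(\D) = \mathcal{R}_{\overline{\mathcal{P}}} \cap \mathcal{P}$. By Proposition \ref{singsetpentaprop}, every point of $\mathcal{R}_{\overline{\mathcal{P}}} \cap \mathcal{P}$ has the form $(0, 2\lambda, \lambda^2)$ for some $\lambda \in \D$, which suggests taking
\[
k(\lambda) = (0, 2\lambda, \lambda^2), \qquad \lambda \in \D,
\]
as the candidate parametrization, and the linear projection $C(a, s, p) = s/2$ as a left inverse.

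First I would verify that $k$ maps $\D$ into $\mathcal{P}$. The point $(2\lambda, \lambda^2) = (\lambda + \lambda, \lambda \cdot \lambda)$ lies in $\Bbb{G}$ by Definition \ref{symmbidiscdfn} whenever $\lambda \in \D$. Applying Theorem \ref{pentathm1.1}(3) with $\lambda_1 = \lambda_2 = \lambda$ gives
\[
\tfrac{1}{2}|1 - \overline{\lambda}\lambda| + \tfrac{1}{2}(1 - |\lambda|^2) = 1 - |\lambda|^2 > 0 = |a|,
\]
so $(0, 2\lambda, \lambda^2) \in \mathcal{P}$. Thus $k \in \hol(\D, \mathcal{P})$ and $k(\D) = \mathcal{R}_{\overline{\mathcal{P}}} \cap \mathcal{P}$.

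Next I would check that $C(a, s, p) = s/2$ maps $\mathcal{P}$ into $\D$. Every $(a, s, p) \in \mathcal{P}$ satisfies $(s, p) \in \Bbb{G}$ by Remark \ref{PGrelated}, so $s = z + w$ for some $z, w \in \D$ and therefore $|s/2| < 1$. Hence $C \in \hol(\mathcal{P}, \D)$. The identity
\[
(C \circ k)(\lambda) = C(0, 2\lambda, \lambda^2) = \lambda = \id_{\D}(\lambda)
\]
holds for every $\lambda \in \D$, which by Definition \ref{Complexgeodesic} establishes that $\mathcal{R}_{\overline{\mathcal{P}}} \cap \mathcal{P}$ is a complex geodesic in $\mathcal{P}$.

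There is no real obstacle here: the only point worth double-checking is that $k$ actually takes values inside $\mathcal{P}$ (not merely in $\overline{\mathcal{P}}$), which is handled by the strict inequality in Theorem \ref{pentathm1.1}(3) together with $a = 0$.
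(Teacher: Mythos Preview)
Your proof is correct and in fact slightly cleaner than the paper's. Both arguments exhibit the same parametrization of the royal variety (the paper writes $k(\lambda)=(0,-2\lambda,\lambda^2)$, you write $(0,2\lambda,\lambda^2)$, which is an immaterial sign change), but the paper chooses as left inverse the ``magic function'' $c(a,s,p)=\Phi_\omega(s,p)=\dfrac{2\omega p - s}{2-\omega s}$ for a fixed $\omega\in\T$, whereas you use the elementary linear projection $C(a,s,p)=s/2$. Your choice avoids invoking the $\Phi_\omega$ machinery altogether and makes the verification that $C$ lands in $\D$ a one-line consequence of $|s|<2$ on $\Bbb{G}$; the paper's choice has the virtue of tying the lemma to tools that are central elsewhere in the theory of the symmetrized bidisc, but for the bare statement your argument is the more economical one. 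You are also more explicit than the paper in checking that $k(\D)\subset\mathcal{P}$ strictly, via Theorem~\ref{pentathm1.1}(3).
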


\begin{proof}
Define the analytic functions $k$ and $c$ by
$$k:\Bbb{D}\rightarrow \mathcal{P}, \; k(\lambda) = (0, -2\lambda, \lambda^2)$$
and
$$c:\mathcal{P} \rightarrow \Bbb{D}, \; c(a, s, p) = \Phi_\omega(s, p) = \frac{2\omega p-s}{2-\omega s}, \; \; \text{where} \;\; \omega \in \T .$$
For $\lambda \in \Bbb{D}$,
$$(c \; \circ \; k)(\lambda) = c(k(\lambda)) = c(0, -2\lambda, \lambda^2) = \frac{2\omega\lambda^2+2\lambda}{2+2\omega\lambda}  = \lambda,$$  which means $ c \; \circ \; k = \id_{\Bbb{D}}.$
By definition of $\mathcal{R}_{\overline{\mathcal{P}}}$, it is easy to see that $\mathcal{R}_{\overline{\mathcal{P}}}  \cap \mathcal{P} = k(\Bbb{D})$. Therefore $\mathcal{R}_{\overline{\mathcal{P}}} \cap \mathcal{P}$ is a complex geodesic in $\mathcal{P}$.
\end{proof}

\section{Examples of $\overline{\mathcal{P}}$-inner functions}\label{exofpentainner}

Descriptions of inner and outer functions in $H^\infty(\Bbb{D})$ and  properties of inner and outer functions can be found  \cite[Chapter III]{SF1970}. Here $H^\infty(\Bbb{D})$ is the space of holomorphic functions $u$ on $\Bbb{D}$ such that the corresponding norm
\begin{equation*}
\|u\|_\infty = \sup\limits_{\lambda \in \Bbb{D}} |u(\lambda)|
\end{equation*}
is finite.

\begin{defn}\label{innerfuncDtoD}
An \textup{inner function} is an analytic map $f:\Bbb{D} \rightarrow \overline{\Bbb{D}}$ such that the radial limit $$\lim_{r \to 1^{-}}f(r\lambda)$$ exists and belongs to $\Bbb{T}$ for almost all $\lambda \in \Bbb{T}$ with respect to Lebesgue measure.
\end{defn}

It is well-known that the rational inner functions on $\Bbb{D}$ are precisely the finite Blaschke products. 
One can see that the only functions which are at the same time inner and outer are the constant functions of modulus 1.

\begin{defn}\label{pentainnerfunc}
A $\overline{\mathcal{P}}$-inner or penta-inner function is an analytic map $f:\Bbb{D}\rightarrow \overline{\mathcal{P}}$ such that the radial limit $$\lim_{r \to 1^{-}}f(r\lambda)$$ exists and belongs to $b\overline{\mathcal{P}}$ for almost all $\lambda \in \Bbb{T}$ with respect to Lebesgue measure.
\end{defn}
By Fatou's Theorem, the $\lim_{r \to 1^{-}}f(r\lambda)$ exists for almost all $\lambda \in \Bbb{T}$.

\begin{rem}\label{rem4.2Omar}
Let $f:\Bbb{D}\rightarrow \overline{\mathcal{P}}$ be a rational $\overline{\mathcal{P}}$-inner function. Since $f$ is rational and bounded on $\Bbb{D}$ it has no poles in $\overline{\Bbb{D}}$ and hence $f$ is continuous on $\overline{\Bbb{D}}$. Thus one can consider the continuous function $$\tilde f:\Bbb{T}\rightarrow b\overline{\mathcal{P}}, \textrm{ where }  \tilde f(\lambda)=\lim_{r \to 1^-}f(r\lambda) \textrm{ for all } \lambda \in \Bbb{T}.$$
\end{rem}

\begin{eg}\label{pentainnerfunex1}
Let us consider an example of an analytic function $f:\Bbb{D}\rightarrow \overline{\mathcal{P}}$. Consider the analytic map $h:\Bbb{D}\rightarrow \Bbb{B}^{2\times 2}$ defined by
\begin{equation}\label{hdef} h(\lambda)=
\begin{bmatrix}
\varphi(\lambda) & 0 \\ 0 & \psi(\lambda)
\end{bmatrix} \ \ \ \textrm{for} \ \lambda \in \Bbb{D},
\end{equation}
where $\varphi, \psi \in H^\infty(\Bbb{D})$ are nonconstant inner functions. 
Note that $$\|h(\lambda)\|= \max \{|\varphi(\lambda)|,|\psi(\lambda)|\} \; \textless \; 1\; \text{ for} \; \lambda \in \Bbb{D}.$$
For all $\lambda \in \Bbb{D}$, let $$f(\lambda) = \pi(h(\lambda))=(0, \tr h(\lambda), \det h(\lambda)).$$

\noindent {\em Let us show that the function $f$ is a $\overline{\mathcal{P}}$-inner function only when $\varphi = \psi$.}\\

Let, for $\lambda \in \Bbb{D}, \; a(\lambda) = 0, \; s(\lambda) = \tr h(\lambda) = \varphi(\lambda)+\psi(\lambda)$ and $p(\lambda) = \det h(\lambda) = \varphi(\lambda)\psi(\lambda)$.
Clearly $f=(a, s, p):\Bbb{D}\rightarrow \overline{\mathcal{P}}$ is an analytic function.

To prove that $f$ is $\overline{\mathcal{P}}$-inner, we need to check that $f(\lambda) \in b\overline{\mathcal{P}}$ for almost every $\lambda \in \Bbb{T}$, that is, $\big(s(\lambda), p(\lambda)\big) \in b\Gamma$ and $\sqrt{1-\frac{1}{4}|s(\lambda)|^2} = 0$ for almost every $\lambda \in \Bbb{T}$.
Note that, for almost every $\lambda \in \Bbb{T}$,
$$|p(\lambda)| = |\varphi(\lambda)\psi(\lambda)| = |\varphi(\lambda)||\psi(\lambda)| = 1, \ \textrm{ since } \ |\varphi(\lambda)| = 1 \textrm{ and } \ |\psi(\lambda)| = 1,$$
$$|s(\lambda)| = |\varphi(\lambda)+\psi(\lambda)| \leq |\varphi(\lambda)|+|\psi(\lambda)| = 2,$$ and
\begin{eqnarray*}
(\overline{s}p)(\lambda) & = & \big(\overline{\varphi(\lambda)+\psi(\lambda)}\big)\big(\varphi(\lambda) \psi(\lambda)\big) = \varphi(\lambda)\overline{\varphi(\lambda)}\psi(\lambda) + \varphi(\lambda)\psi(\lambda)\overline{\psi(\lambda)} \\
& = & |\varphi(\lambda)|^2 \psi(\lambda) + \varphi(\lambda)|\psi(\lambda)|^2 = \varphi(\lambda)+\psi(\lambda) = s(\lambda).
\end{eqnarray*}
Hence for almost every $\lambda \in \Bbb{T}, \ |p(\lambda)| = 1, \ |s(\lambda)|\leq 2$ and $(\overline{s}p)(\lambda) = s(\lambda)$, and so $\big(\tr h(\lambda), \det h(\lambda)\big) \in b\Gamma$. 
Now, for almost every $\lambda \in \Bbb{T}$,
\begin{eqnarray*}
1-\frac{1}{4}|s(\lambda)|^2 & = & 1-\frac{1}{4}|\varphi(\lambda)+\psi(\lambda)|^2 = 1-\frac{1}{4}\Big(\big(\varphi(\lambda)+\psi(\lambda)\big)\big(\overline{\varphi(\lambda)+\psi(\lambda)}\big)\Big) \\
& = & 1-\frac{1}{4}\Big(1+1+2 \mathrm{Re}\big(\varphi(\lambda)\overline{\psi(\lambda)}\big)\Big) =
\frac{1}{2}-\frac{1}{2}\mathrm{Im}\big(i\varphi(\lambda)\overline{\psi(\lambda)}\big).
\end{eqnarray*}
Hence $|a|=\sqrt{1-\frac{1}{4}|s|^2}$ almost everywhere on $\Bbb{T}$ if and only if
$$\frac{1}{2}-\frac{1}{2} \mathrm{Im} \big(i\varphi(\lambda)\overline{\psi(\lambda)}\big) = 0, \textrm{  for almost every  }\lambda \in \Bbb{T},$$
if and only if $\mathrm{Im}(i\varphi(\lambda)\overline{\psi(\lambda)}) = 1 \textrm{  for almost every  }\lambda \in \Bbb{T}.$
Therefore, $|a|=\sqrt{1-\frac{1}{4}|s|^2}$ almost everywhere on $\Bbb{T}$ if and only if $\varphi(\lambda)\overline{\psi(\lambda)} = 1$ almost everywhere on $\Bbb{T}$, and so, $\varphi(\lambda) =  \psi(\lambda)$ almost everywhere on $\Bbb{T}$.
Thus the function $f$ is a $\overline{\mathcal{P}}$-inner function only when $\varphi = \psi$, and so
$f = (0, 2 \varphi, \varphi^2)$.
\end{eg}

\begin{eg}\label{pentainnerfunex2}
Let $h_1 : \Bbb{D} \rightarrow \Bbb{C}^{2 \times 2}$ be defined by $h_1=Uh$, where $h$ is defined by equation \textup{(\ref{hdef})} and
\begin{equation*}
U=
\begin{bmatrix}
\dfrac{1}{\sqrt{2}} & \dfrac{1}{\sqrt{2}} \\ 
\dfrac{1}{\sqrt{2}}i & -\dfrac{1}{\sqrt{2}}i
\end{bmatrix}.
\end{equation*}
Note that $U$ is a unitary matrix.
Then, for $\lambda \in \Bbb{D}$,
\begin{eqnarray*}
h_1(\lambda) & = & Uh(\lambda) \\
                    & = & \frac{1}{\sqrt{2}} \begin{bmatrix} \varphi(\lambda) & \psi(\lambda) \\ i\varphi(\lambda) & -i\psi(\lambda) \end{bmatrix},
\end{eqnarray*}
and, for all $\lambda \in \Bbb{D}$,
$$\|h_1(\lambda)\| \leq \|U\| \|h(\lambda)\|=\|h(\lambda)\|=\max\{|\varphi(\lambda)|, |\psi(\lambda)|\} < 1.$$ 
Hence $h_1(\lambda) \in \Bbb{B}^{2\times2}$ for all $\lambda \in \Bbb{D}$. Define $f_1=\pi \circ h_1$ on $\Bbb{D}$. Then, for $\lambda \in \Bbb{D}$,
\begin{align}\label{2ndegf_1}
\notag f_1(\lambda) & = \pi(h_1(\lambda)) \\
\notag           & = \pi \bigg(\frac{1}{\sqrt{2}} \begin{bmatrix} \varphi(\lambda) & \psi(\lambda) \\ i\varphi(\lambda) & -i\psi(\lambda) \end{bmatrix}\bigg) \\
                    & = \bigg(\frac{i\varphi(\lambda)}{\sqrt{2}}, \frac{\varphi(\lambda)-i\psi(\lambda)}{\sqrt{2}}, -i\varphi(\lambda)\psi(\lambda)\bigg).
\end{align}
Clearly, $f_1:\Bbb{D}\rightarrow \overline{\mathcal{P}}$ is an analytic function since $\varphi, \psi$ are analytic on $\Bbb{D}$.\\

\noindent{\em Let us show that $f_1$  is a $\overline{\mathcal{P}}$-inner function if and only if $\varphi = i\psi$.}\\

Let us check when the function $f_1$ is $\overline{\mathcal{P}}$-inner. We need to find conditions when $f_1$ maps $\Bbb{T}$ into the distinguished boundary $b\overline{\mathcal{P}}$ of $\mathcal{P}$.
Since $\varphi$, $\psi$ are inner functions, they have unit modulus almost everywhere on $\Bbb{T}$. Thus, for $s = \dfrac{\varphi-i\psi}{\sqrt{2}}$, $p = -i\varphi \psi$ and for almost every $\lambda \in \Bbb{T}$,
$$|p(\lambda)|=|-i\varphi(\lambda)\psi(\lambda)|=|\varphi(\lambda)||\psi(\lambda)|=1.$$
$$|s(\lambda)|=\bigg|\frac{\varphi(\lambda)-i\psi(\lambda)}{\sqrt{2}}\bigg|
 \leq \frac{|\varphi(\lambda)|+|-i\psi(\lambda)|}{\sqrt{2}}=\frac{2}{\sqrt{2}}.$$
\begin{eqnarray*}
(\overline{s}p)(\lambda) & = & \frac{\overline{\varphi(\lambda)-i\psi(\lambda)}}{\sqrt{2}}(-i\varphi(\lambda)\psi(\lambda)) \\                                           
 & = & \frac{-i|\varphi(\lambda)|^2 \psi(\lambda)+\varphi(\lambda)|\psi(\lambda)|^2}{\sqrt{2}} =
                       \frac{\varphi(\lambda)-i\psi(\lambda)}{\sqrt{2}}=s(\lambda).
\end{eqnarray*}
Therefore, for almost every $\lambda \in \Bbb{T}, |p(\lambda)| = 1$, $|s(\lambda)|\leq 2$ and $(\overline{s}p)(\lambda) = s(\lambda)$ and so $(s(\lambda), p(\lambda)) \in b\Gamma$. Finally,
\begin{eqnarray*}
\sqrt{1-\frac{1}{4}|s(\lambda)|^2} & = & \sqrt{1-\frac{1}{4}  \bigg(\frac{1}{2}|\varphi(\lambda)-i \psi(\lambda)|^2\bigg)} \\
  & = & \sqrt{1-\frac{1}{8}\bigg(1+1+2 \mathrm{Re} (\varphi(\lambda) i \overline{\psi(\lambda)})\bigg)} 
                                      = \frac{1}{2} \sqrt{3+\mathrm{Im} (\varphi(\lambda)\overline{\psi(\lambda)})}.
\end{eqnarray*}
We want $|a|=\sqrt{1-\frac{1}{4}|s|^2}$ almost everywhere on $\Bbb{T}$, that is, for almost every $\lambda \in \Bbb{T}$,
$$\dfrac{1}{\sqrt{2}} = \dfrac{|\varphi(\lambda)|}{\sqrt{2}} = \dfrac{1}{2} \sqrt{3+\mathrm{Im} (\varphi(\lambda)\overline{\psi(\lambda)})}.$$
Hence $|a|=\sqrt{1-\frac{1}{4}|s|^2}$ almost everywhere on $\Bbb{T}$ if and only if $\sqrt{3+\mathrm{Im} (\varphi(\lambda)\overline{\psi(\lambda)})} = \sqrt{2}$ for almost every $\lambda \in  \Bbb{T}$, or equivalently,
 $\varphi(\lambda) = -i\psi(\lambda)$ for almost every $\lambda \in \Bbb{T}$. Thus $f_1$ given by equation \eqref{2ndegf_1} is a $\overline{\mathcal{P}}$-inner function if and only if $\varphi = -i\psi$.  In this case
 $f_1=  \bigg(\frac{i\varphi}{\sqrt{2}},  \sqrt{2} \varphi  , \varphi^2\bigg)$. 
\end{eg}

\begin{eg}\label{pentainnerfunex3}
Let  $v$, $\varphi$ and $\psi$ be inner functions on $\Bbb{D}$.
Consider the functions
\begin{equation*}
V(\lambda) = \frac{1}{\sqrt{2}}\begin{bmatrix} 1 & v \\ -1 & v \end{bmatrix}(\lambda) \ \textrm{ and } \ h(\lambda) = \begin{bmatrix} \varphi & 0 \\ 0 & \psi \end{bmatrix}(\lambda),\; \textrm{ for }\;  \lambda \in \Bbb{D}.
\end{equation*}
Define
\begin{eqnarray*}
U(\lambda) & = & (V^* h V)(\lambda)=
\frac{1}{\sqrt{2}} \begin{bmatrix} 1 & -1 \\ \overline{v} & \overline{v} \end{bmatrix}
\begin{bmatrix} \varphi & 0 \\ 0 & \psi \end{bmatrix}
\frac{1}{\sqrt{2}} \begin{bmatrix} 1 & v \\ -1 & v \end{bmatrix}(\lambda) \\
& = & \frac{1}{2} \begin{bmatrix} \varphi + \psi & (\varphi - \psi)v \\ (\varphi - \psi)\overline{v} & \varphi + \psi \end{bmatrix}(\lambda), \ \textrm{ for }  \lambda \in \Bbb{D}.
\end{eqnarray*}
Note that, for all  $ \lambda \in \Bbb{D}$,
$$\| V^*(\lambda)\| = \left\|
 \begin{bmatrix} 1 & 0 \\ 0 & \overline{v}(\lambda) \end{bmatrix} \frac{1}{\sqrt{2}}
\begin{bmatrix} 1 & -1 \\ 1 & 1 \end{bmatrix} \right\| \leq 1,$$
since 
$  \frac{1}{\sqrt{2}}\begin{bmatrix} 1 & -1 \\ 1 & 1 \end{bmatrix}$ is unitary and 
$| \overline{v}(\lambda) | \le 1$ on $\Bbb{D}$.
Hence
$$\|U(\lambda)\| \leq\|V(\lambda)\|^2 \|h(\lambda)\| < 1 \ \textrm{ for }  \lambda \in \Bbb{D}.$$
Define $f:\Bbb{D} \rightarrow \overline{\mathcal{P}}$  by  $f=\pi \circ U$.
Then, for $\lambda \in \Bbb{D}$,
\begin{equation}\label{3rdegf}
f(\lambda) = \pi \circ U(\lambda) = \Big(\frac{1}{2}(\varphi - \psi)\overline{v}, \varphi + \psi, \frac{1}{4}\big((\varphi + \psi)^2-(\varphi - \psi)^2|v|^2\big) \Big)(\lambda).
\end{equation}
Note that $f$ is analytic on $\Bbb{D}$ if and only if $v$ is constant or $\varphi = \psi$. \\

\noindent{\em Let us show that $f$  is a $\overline{\mathcal{P}}$-inner function if and only if  $v$ is constant or $\varphi = \psi$.}\\

{\bf Case 1}. Suppose $v$ is constant. As $v$ is inner, $|v|=1$.
Let us check that $f:\Bbb{D} \rightarrow \overline{\mathcal{P}}$ is $\overline{\mathcal{P}}$-inner, that is, $f(\Bbb{T}) \subset b\overline{\mathcal{P}}$. Note, for almost all $\lambda \in \Bbb{T}$, 
$\det U(\lambda) = (\varphi \psi)(\lambda)$ and $(\varphi+\psi, \varphi\psi)(\lambda) \in b\Gamma$ as in Example \ref{pentainnerfunex1}.
$$|a|^2 = \dfrac{1}{4}|\varphi - \psi|^2 = \dfrac{1}{4} \Big(1+1-2 \mathrm{Re} (\overline{\varphi}\psi) \Big) = \dfrac{1}{2}-\dfrac{1}{2} \mathrm{Re} (\overline{\varphi}\psi) \textrm{ almost everywhere on } \Bbb{T}.$$
$$1 - \frac{1}{4} |s|^2 = 1 - \frac{1}{4}|\varphi + \psi|^2 = 1 - \frac{1}{4} \Big(1 + 1 + 2 \mathrm{Re}(\overline{\varphi}\psi) \Big) = 1-\frac{1}{2} \mathrm{Re} (\overline{\varphi}\psi) = |a|^2.$$
Thus $|a|^2 = 1-\frac{1}{4}|s|^2$ almost everywhere on $\Bbb{T}$, and so, $f$ given by equation \eqref{3rdegf} is a $\overline{\mathcal{P}}$-inner function if $v$ is constant.  Since $|v|=1$,
$f =  \Big(\frac{1}{2}(\varphi - \psi)\overline{v}, \varphi + \psi, \varphi  \psi \Big)$. \\

{\bf Case 2}. Suppose  $\varphi = \psi$. Then 
$$f = \Big(0, 2\varphi, \frac{1}{4}(2\varphi)^2\Big) = (0, 2\varphi, \varphi^2).$$
We have shown in Example \ref{pentainnerfunex1} that $f= (0, 2\varphi, \varphi^2)$ is a $\overline{\mathcal{P}}$-inner function.
\end{eg}

\begin{eg}\label{pentainnerfunex4}
Define the function $x(\lambda) = (\lambda^m, 0, \lambda): \Bbb{D}\rightarrow \overline{\mathcal{P}}$. First we need to show that for all $\lambda \in \Bbb{D}, \; x(\lambda) \in \overline{\mathcal{P}}$.  {\em Let us show that $x$ is a rational $\overline{\mathcal{P}}$-inner function.}\\

By Proposition \ref{symm-bidisc}, $(s, p) \in \Gamma$ if and only if
$$|s|\leq 2 \ \textrm{  and  } |s-\overline{s}p|\leq 1-|p|^2.$$
It is easy to see that $(0, \lambda) \in \Gamma$.
By Theorem \ref{pentathm5.3}, if $(s, p) \in \Gamma$, then for $a \in \Bbb{C}, (a, s, p) \in \overline{\mathcal{P}}$ if and only if
\begin{equation}\label{*}
|a| \leq \left|1-\frac{\frac{1}{2}s\overline{\beta}}{1+\sqrt{1-|\beta|^2}}\right|, \ \text{ where } \ \beta=\frac{s-\overline{s}p}{1-|p|^2}.
\end{equation}
In the case $s = 0$, equation (\ref{*}) is equivalent to $|a| \leq 1$. 
Note that $x(\lambda) = \big(a(\lambda), s(\lambda), p(\lambda)\big) = (\lambda^m, 0, \lambda), \; \lambda \in \Bbb{D}, \text{ is analytic in } \Bbb{D}$, and
$$|a(\lambda)| = |\lambda^m| \leq 1, \ \textrm{ for all } \ \lambda \in \Bbb{D}.$$
Thus for $\lambda \in \Bbb{D}, \ x(\lambda) \in \overline{\mathcal{P}}$. \\

Now, let us check that $x$ maps $\Bbb{T}$ into the distinguished boundary $b\overline{\mathcal{P}}$ of $\mathcal{P}$. For all $\lambda \in \Bbb{T}$,
$$|p(\lambda)| = |\lambda| = 1, \ |s(\lambda)| = |0| \leq 2,$$
$$(\overline{s}p)(\lambda) = 0 = s(\lambda) \ \textrm{ and }$$
$$|a(\lambda)| = |\lambda^m| = \sqrt{1-\frac{1}{4}|s(\lambda)|^2} = 1.$$
Therefore for every $\lambda \in \Bbb{T}$, $x(\lambda) \in b\overline{\mathcal{P}}$ and hence $x$ is a rational $\overline{\mathcal{P}}$-inner function.
\end{eg}

\begin{eg}\label{pentainnerfunex5}
For $\lambda \in \Bbb{D}$, define the function $x(\lambda) = \big(a(\lambda), s(\lambda), p(\lambda)\big)= (\lambda, 0, \lambda^n)$. 
As in the previous example, for all $\lambda \in \Bbb{D}$, by Proposition \ref{symm-bidisc},
$(0, \lambda^n) \in \Gamma$. For $a \in \Bbb{C}$, we want
\begin{equation}\label{**}
|a| \leq |1-\frac{\frac{1}{2}s\overline{\beta}}{1+\sqrt{1-|\beta|^2}}|.
\end{equation}
Since $s = 0$, the condition (\ref{**}) is equivalent to $|a| \leq 1$. Note that
$$|a(\lambda)| = |\lambda| \leq 1, \textrm{ for all } \ \lambda \in \Bbb{D}.$$
Thus, by Theorem \ref{pentathm5.3}, for $\lambda \in \Bbb{D}, \ x(\lambda) \in \overline{\mathcal{P}}$. {\em Let us show that $x$ is a rational $\overline{\mathcal{P}}$-inner function.}\\

Let us check that $x$ maps $\Bbb{T}$ into the distinguished boundary $b\overline{\mathcal{P}}$ of $\mathcal{P}$. For all $\lambda \in \Bbb{T}$,
$$|p(\lambda)| = |\lambda^n| = 1, \ |s(\lambda)| = |0| \leq 2,$$
$$(\overline{s}p)(\lambda) = 0 = s(\lambda) \ \ \textrm{and }$$
$$|a(\lambda)| = |\lambda| = \sqrt{1-\frac{1}{4}|s(\lambda)|^2} = 1.$$
Therefore for every $\lambda \in \Bbb{T}$, $x(\lambda) \in b\overline{\mathcal{P}}$ and hence $x$ is a rational $\overline{\mathcal{P}}$-inner function.
\end{eg}

\section{Some properties of analytic functions $x:\Bbb{D}\rightarrow \overline{\mathcal{P}}$}\label{propanalyfuncs}

In this section and in Section \ref{Connectiongammapenta}
 we will show  that there are close relations between 
$\overline{\mathcal{P}}$-inner functions and  $\Gamma$-inner functions.
 Recall that the $\Gamma$-inner functions were first mentioned in \cite{2x2NPP}. A good undersdanding of  rational $\Gamma$-inner functions will play an important part in any future solution of the finite interpolation problel for
$\text{Hol}(\Bbb{D}, \Gamma)$, since such a problem has a solution if and only if it has a rational $\Gamma$-inner solution  (see, for example, \cite[Theorem 4]{costara2005} and  \cite[Theorem 8.1]{ALY13-2}).
The rational $\Gamma$-inner functions were classified in \cite{ALY13}. Algebraic and geometric aspects of rational $\Gamma$-inner functions were presented in \cite{ALY18}. 

\begin{defn}\label{Gammainnerfunc}
A $\Gamma$-inner function is an \textit{analytic function} $h:\Bbb{D} \rightarrow \Gamma$ such that the radial limit \begin{equation} \label{radiallimit}
\lim_{r \to 1^-} h(r \lambda)
\end{equation}
exists and belongs to $b\Gamma$ for almost all $\lambda \in \Bbb{T}$ with respect to Lebesgue measure.
\end{defn}

\begin{lem}\label{pentagammainner}
{\rm (i)}  Let $x = (a, s, p) : \Bbb{D} \rightarrow \overline{\mathcal{P}}$ be an analytic function. Then $h = (s, p) : \Bbb{D} \rightarrow \Gamma$ is an analytic function.

{\rm (ii)}
Let $x = (a, s, p) : \Bbb{D} \rightarrow \overline{\mathcal{P}}$ be a $\overline{\mathcal{P}}$-inner function. Then $h = (s, p) : \Bbb{D} \rightarrow \Gamma$ is a $\Gamma$-inner function.
\end{lem}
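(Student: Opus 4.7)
My plan is that both parts follow almost directly from the characterizations of $\overline{\mathcal{P}}$ and $b\overline{\mathcal{P}}$ already established earlier in the excerpt; there is essentially no obstacle, just a bookkeeping check of what membership in these sets entails for the $(s,p)$-component.

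For part (i), I would first observe that $s$ and $p$ are the second and third coordinates of the analytic map $x$, hence themselves analytic on $\Bbb{D}$, so $h=(s,p):\Bbb{D}\to\Bbb{C}^2$ is analytic. The only content is verifying that $h$ takes values in $\Gamma$. Using the definition $\mathcal{P}=\pi(\Bbb{B}^{2\times 2})$ from \eqref{pentablock}, any $(a,s,p)\in\mathcal{P}$ comes from some $A\in\Bbb{B}^{2\times 2}$, whose eigenvalues lie in $\Bbb{D}$; then $(s,p)=(\tr A,\det A)\in\Bbb{G}$. This is exactly Remark \ref{PGrelated}. Because the coordinate projection $(a,s,p)\mapsto(s,p)$ is continuous and $\Gamma=\overline{\Bbb{G}}$, taking closures shows that $(a,s,p)\in\overline{\mathcal{P}}$ implies $(s,p)\in\Gamma$. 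Applying this pointwise in $\lambda\in\Bbb{D}$ yields $h(\Bbb{D})\subseteq\Gamma$, which proves (i).

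For part (ii), I already have analyticity with values in $\Gamma$ from (i), so the remaining task is to check the radial-limit condition of Definition \ref{Gammainnerfunc}. Since $x$ is $\overline{\mathcal{P}}$-inner, the radial limit $x^*(\lambda):=\lim_{r\to 1^-}x(r\lambda)$ exists in $b\overline{\mathcal{P}}$ for almost every $\lambda\in\Bbb{T}$. Writing $x=(a,s,p)$, the existence of $x^*(\lambda)$ gives the existence of the componentwise radial limits $s^*(\lambda)$ and $p^*(\lambda)$, so
\[
\lim_{r\to 1^-} h(r\lambda)\;=\;\bigl(s^*(\lambda),p^*(\lambda)\bigr)\qquad\text{for almost every }\lambda\in\Bbb{T}.
\]
I would now invoke the description of $b\overline{\mathcal{P}}$ in Theorem \ref{pentathm8.4} (equation \eqref{bP}): the conditions $|p|=1$, $|s|\le 2$ and $s=\overline{s}p$ appearing there are exactly the characterization of $b\Gamma$ given in Proposition \ref{symm-bidisc}(3). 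Thus $x^*(\lambda)\in b\overline{\mathcal{P}}$ forces $(s^*(\lambda),p^*(\lambda))\in b\Gamma$ almost everywhere on $\Bbb{T}$, which is precisely what is needed for $h$ to be $\Gamma$-inner in the sense of Definition \ref{Gammainnerfunc}. This finishes (ii).

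In short, the proof is a two-step translation: (i) is continuity of $(a,s,p)\mapsto(s,p)$ combined with $\pi(\Bbb{B}^{2\times 2})$ projecting into $\Bbb{G}$, and (ii) is matching the three conditions of \eqref{bP} with Proposition \ref{symm-bidisc}(3). No nontrivial obstacle is expected.
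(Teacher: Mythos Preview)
Your proof is correct and follows essentially the same approach as the paper: both parts reduce to the fact that the projection $(a,s,p)\mapsto(s,p)$ sends $\overline{\mathcal{P}}$ into $\Gamma$ and $b\overline{\mathcal{P}}$ into $b\Gamma$, via Remark \ref{PGrelated} and Theorem \ref{pentathm8.4} respectively. If anything, your argument in (i) is slightly more careful, since Remark \ref{PGrelated} is stated only for $\mathcal{P}$ rather than $\overline{\mathcal{P}}$, and you make the passage to the closure explicit.
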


\begin{proof}
(i) By assumption, $x = (a, s, p)$ is analytic on $\Bbb{D}$ and for all $\lambda \in \Bbb{D}$, \\
$x(\lambda) = (a(\lambda), s(\lambda), p(\lambda)) \in \overline{\mathcal{P}}$. By Remark \ref{PGrelated}, for all $\lambda \in \Bbb{D}$, $(s(\lambda), p(\lambda)) \in \Gamma$. Thus $h = (s, p):\Bbb{D} \rightarrow \Gamma$, where $h(\lambda) = (s(\lambda), p(\lambda))$, for $\lambda \in \Bbb{D}$, is well-defined and analytic from $\Bbb{D}$ to $\Gamma$. 

(ii) By assumption $x = (a, s, p):\Bbb{D} \rightarrow \overline{\mathcal{P}}$ is a penta-inner function, and so, for almost all $\lambda \in \Bbb{T}$, $x(\lambda) \in b\overline{\mathcal{P}}$. Recall $b\overline{\mathcal{P}} = \Big\{(a, s, p) \in \Bbb{C}^3 : (s, p) \in b\Gamma, \; |a| = \sqrt{1-\frac{1}{4}|s|^2}\Big\}$. By Theorem \ref{pentathm8.4}, for almost all $\lambda \in \Bbb{T}$, $h(\lambda) = \big(s(\lambda), p(\lambda)\big) \in b\Gamma$. Hence $h$ is a $\Gamma$-inner function. 
\end{proof}

\vspace*{0.5cm}
Recall that, by Proposition \ref{pentaprop8.3}, $K_1 = \bigg\{(a, s, p) \in \overline{\mathcal{P}} : (s, p) \in b\Gamma, \ |a| \leq \sqrt{1-\frac{1}{4}|s|^2}\bigg\}$ is a closed boundary of $A(\mathcal{P})$.

\begin{prop}\label{Gamma-inner-penta}
\textup{(i)} Let $h = (s, p) : \Bbb{D} \rightarrow \Gamma$ be an analytic function. Then $x = (0, s, p)$ is an analytic function from $\Bbb{D}$ to $\overline{\mathcal{P}}$. \newline
\textup{(ii)} Let $h = (s, p) : \Bbb{D} \rightarrow \Gamma$ be a $\Gamma$-inner function. Then $x = (0, s, p):\Bbb{D} \rightarrow \overline{\mathcal{P}}$ is an analytic function such that, for almost all $\lambda \in \Bbb{T}, \; x(\lambda) \in K_1$.
\end{prop}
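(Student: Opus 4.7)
The plan is to verify both parts directly from the membership criteria for $\overline{\mathcal{P}}$ established in Theorem \ref{pentathm5.3}, taking advantage of the fact that the first coordinate is identically zero, which trivialises the key inequality.

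For part (i), I will note that $x = (0, s, p)$ is analytic on $\Bbb{D}$ because $s$ and $p$ are, so only the membership $(0, s(\lambda), p(\lambda)) \in \overline{\mathcal{P}}$ requires argument. Fix $\lambda \in \Bbb{D}$. Since $h$ maps into $\Gamma$, the point $(s(\lambda), p(\lambda))$ satisfies the hypotheses of Theorem \ref{pentathm5.3}. The simplest criterion to invoke is condition (4): for all $z \in \Bbb{D}$ with $1 - s(\lambda) z + p(\lambda) z^2 \neq 0$, formula \eqref{Psi_z} gives
\[
\Psi_z(0, s(\lambda), p(\lambda)) = \frac{0 \cdot (1-|z|^2)}{1 - s(\lambda) z + p(\lambda) z^2} = 0,
\]
so $|\Psi_z(0, s(\lambda), p(\lambda))| \leq 1$ trivially, and hence $(0, s(\lambda), p(\lambda)) \in \overline{\mathcal{P}}$. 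Alternatively, one can exhibit a contractive matrix realisation: write $(s(\lambda), p(\lambda)) = (\lambda_1 + \lambda_2, \lambda_1 \lambda_2)$ with $|\lambda_i| \leq 1$ and take $A = \operatorname{diag}(\lambda_1, \lambda_2)$, which has $\|A\| \leq 1$ and $\pi(A) = (0, s(\lambda), p(\lambda))$, verifying condition (5).

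For part (ii), analyticity of $x : \Bbb{D} \to \overline{\mathcal{P}}$ follows from part (i) applied to the $\Gamma$-inner $h$. It remains to show $x(\lambda) \in K_1$ for almost every $\lambda \in \Bbb{T}$. Since $h$ is $\Gamma$-inner, for almost every $\lambda \in \Bbb{T}$ the radial limit $(s(\lambda), p(\lambda))$ lies in $b\Gamma$. By Proposition \ref{symm-bidisc}(3) this forces $|s(\lambda)| \leq 2$, hence $1 - \tfrac{1}{4}|s(\lambda)|^2 \geq 0$ and $\sqrt{1-\tfrac{1}{4}|s(\lambda)|^2}$ is a well-defined non-negative real number. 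Since the first coordinate of $x$ is identically zero,
\[
|a(\lambda)| = 0 \leq \sqrt{1 - \tfrac{1}{4}|s(\lambda)|^2}
\]
automatically, and combined with $(s(\lambda), p(\lambda)) \in b\Gamma$ this places $x(\lambda)$ in $K_1$ as defined in \eqref{K1}.

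There is no substantial obstacle: both statements reduce to the observation that setting $a \equiv 0$ makes the defining inequalities for $\overline{\mathcal{P}}$ (respectively $K_1$) collapse to trivialities, given the corresponding information about $(s, p)$ in $\Gamma$ (respectively $b\Gamma$). The only mild bookkeeping point is ensuring that the radial limits of $s$ and $p$ separately exist almost everywhere on $\Bbb{T}$, which follows from Fatou's theorem applied componentwise, as already invoked implicitly in the definition of $\Gamma$-inner functions.
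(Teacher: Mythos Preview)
The proposal is correct and follows essentially the same approach as the paper: both invoke Theorem \ref{pentathm5.3} to establish membership in $\overline{\mathcal{P}}$ for part (i) and use Proposition \ref{symm-bidisc} together with $a=0$ to verify the $K_1$ inequality for part (ii). Your version is just slightly more explicit in naming which equivalent condition of Theorem \ref{pentathm5.3} is being used (condition (4) via $\Psi_z$, or condition (5) via a diagonal matrix), whereas the paper simply cites the theorem without specifying.
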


\begin{proof}
(i) It follows from Theorem \ref{pentathm5.3} that, for all $\lambda \in \Bbb{D}, \big(0, s(\lambda), p(\lambda)\big) \in \overline{\mathcal{P}}$. \\
(ii) Suppose that $h$ is a $\Gamma$-inner function. By Proposition \ref{symm-bidisc}, $|p(\lambda)| = 1, \ |s(\lambda)| \leq 2 \ \text{ and } \ (\overline{s}p)(\lambda) = s(\lambda)$, for almost all $\lambda \in \Bbb{T}$.
Since $a = 0 \text{ and } \sqrt{1-\frac{1}{4}|s(\lambda)|^2} \geq 0$ for almost all $\lambda \in \Bbb{T}$, $x(\Bbb{T}) \subset K_1$.
\end{proof}

\begin{prop}\label{(a_out,s,p)2}
Let $x = (a, s, p)$ be a $\overline{\mathcal{P}}$-inner function. Let $a_{in} \; a_{out}$ be the inner-outer factorization of $a$. Then $\widetilde{x} = (a_{out}, s, p)$ is a $\overline{\mathcal{P}}$-inner function.
\end{prop}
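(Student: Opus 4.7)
The plan is to verify the three defining properties of a $\overline{\mathcal{P}}$-inner function for $\widetilde{x} = (a_{out}, s, p)$: analyticity on $\Bbb{D}$, values in $\overline{\mathcal{P}}$, and radial limits in $b\overline{\mathcal{P}}$ almost everywhere on $\Bbb{T}$. Analyticity is automatic: since $x$ is $\overline{\mathcal{P}}$-inner, $a \in H^\infty(\Bbb{D})$ and hence $a_{out} \in H^\infty(\Bbb{D})$, while $s, p$ are components of the given analytic function $x$. The second property is the crux, and the natural tool is Theorem~\ref{pentathm5.3}(4).

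To set this up, I would first use Lemma~\ref{pentagammainner}(i) together with the factorisation $s(\lambda) = \mu(\lambda) + \nu(\lambda)$, $p(\lambda) = \mu(\lambda)\nu(\lambda)$ with $|\mu(\lambda)|, |\nu(\lambda)| \leq 1$ to observe that
\[
1 - s(\lambda) z + p(\lambda) z^2 = (1 - \mu(\lambda) z)(1 - \nu(\lambda) z) \neq 0 \quad \text{for all } \lambda, z \in \Bbb{D}.
\]
Hence, for each fixed $z \in \Bbb{D}$, the function $F_z(\lambda) := \Psi_z(a(\lambda), s(\lambda), p(\lambda))$ is analytic on $\Bbb{D}$, and Theorem~\ref{pentathm5.3}(4) applied pointwise gives $F_z \in H^\infty(\Bbb{D})$ with $\|F_z\|_\infty \leq 1$. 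Writing $a = a_{in} a_{out}$, we get $F_z = a_{in} \widetilde{F}_z$, where $\widetilde{F}_z(\lambda) := \Psi_z(a_{out}(\lambda), s(\lambda), p(\lambda))$ is analytic on $\Bbb{D}$, and by Theorem~\ref{pentathm5.3}(4) again, verifying $\widetilde{x}(\lambda) \in \overline{\mathcal{P}}$ for all $\lambda \in \Bbb{D}$ reduces to showing $\|\widetilde{F}_z\|_\infty \leq 1$ for every $z \in \Bbb{D}$.

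The hard part will be this last boundedness claim. Here I will invoke the standard Hardy-space fact that if $F \in H^\infty(\Bbb{D})$, $u$ is an inner function, and $F/u$ is analytic on $\Bbb{D}$, then $F/u \in H^\infty(\Bbb{D})$ with $\|F/u\|_\infty = \|F\|_\infty$; this follows from the uniqueness of the inner--outer factorisation, since analyticity of $F/u$ forces $u$ to divide the inner part of $F$, after which the outer parts of $F$ and $F/u$ coincide. Applied to $F = F_z$ and $u = a_{in}$, this yields $\|\widetilde{F}_z\|_\infty \leq 1$, and hence $\widetilde{x}(\lambda) \in \overline{\mathcal{P}}$ throughout $\Bbb{D}$. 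For the boundary condition, Theorem~\ref{pentathm8.4} gives $(s,p)(\lambda) \in b\Gamma$ and $|a(\lambda)| = \sqrt{1 - \tfrac{1}{4}|s(\lambda)|^2}$ for almost every $\lambda \in \Bbb{T}$; combined with the basic identity $|a_{out}| = |a|$ almost everywhere on $\Bbb{T}$, this gives $\widetilde{x}(\lambda) \in b\overline{\mathcal{P}}$ a.e., completing the argument. The degenerate case $a \equiv 0$ is trivial under the convention $a_{out} \equiv 0$.
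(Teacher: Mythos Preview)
Your proof is correct and follows essentially the same approach as the paper: both arguments use the $\Psi_z$ criterion of Theorem~\ref{pentathm5.3}(4), reduce to showing $\|\widetilde{F}_z\|_\infty \leq 1$ for each $z\in\Bbb{D}$, and exploit the fact that $|a_{in}|=1$ a.e.\ on $\Bbb{T}$ together with analyticity of $\widetilde{F}_z$. The only cosmetic difference is that the paper argues directly via boundary values and the maximum principle, whereas you package the same step as the Hardy-space fact that dividing an $H^\infty$ function by an inner factor (when the quotient is analytic) preserves the $H^\infty$ norm.
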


\begin{proof}
By assumption $x = (a, s, p)$ is a $\overline{\mathcal{P}}$-inner function. Hence, for each $\lambda \in \Bbb{D}$, $\;(a(\lambda), s(\lambda), p(\lambda)) \in \overline{\mathcal{P}}$. By Theorem \ref{pentathm5.3}, for all $\lambda \in \Bbb{D}$, $|\Psi_z(a(\lambda), s(\lambda), p(\lambda))| \leq 1$ for all $z \in \Bbb{D}.$ Thus
$$\left|\frac{a(\lambda)(1-|z|^2)}{1-s(\lambda)z+p(\lambda)z^2}\right| \leq 1, \; \text{ for all } \lambda, z \in \Bbb{D}.$$
Recall that  $a=a_{in}a_{out}$, where $a_{in}$ is inner, and so 
 $|a_{in}(\lambda)| = 1$ for almost all $\lambda \in \Bbb{T}$. 
 Therefore, for every $z \in \Bbb{D}$, and, for almost all $\lambda \in \Bbb{T}$,
 $$\left|a_{in}(\lambda)\frac{a_{out}(\lambda)(1-|z|^2)}{1-s(\lambda)z+p(\lambda)z^2}\right|=
 \left|\frac{a_{out}(\lambda)(1-|z|^2)}{1-s(\lambda)z+p(\lambda)z^2}\right| \leq 1. $$
 Note that, for every $z \in \Bbb{D}$, the function 
 $$ \lambda \mapsto  \frac{a_{out}(\lambda)(1-|z|^2)}{1-s(\lambda)z+p(\lambda)z^2}$$ is analytic on $\Bbb{D}.$
 By the maximum principle for analytic functions,  for every $z \in \Bbb{D}$,
$$\left|\frac{a_{out}(\lambda)(1-|z|^2)}{1-s(\lambda)z+p(\lambda)z^2}\right| \leq 1, \; \text{ for all } \lambda \in \Bbb{D}.$$
Hence, by Theorem \ref{pentathm5.3}, for each $\lambda \in \Bbb{D}, \; (a_{out}(\lambda), s(\lambda), p(\lambda)) \in \overline{\mathcal{P}}$. Therefore,  $\widetilde{x} = (a_{out}, s, p) \in$ Hol$(\Bbb{D}, \overline{\mathcal{P}})$. 

To prove the statement of the proposition, we must show that, for almost all $\lambda \in \Bbb{T}$, $\widetilde{x}(\lambda) = (a_{out}(\lambda), s(\lambda), p(\lambda)) \in b\overline{\mathcal{P}}$. Recall that \[b\overline{\mathcal{P}} = \left\{(a, s, p) \in \Bbb{C}^3 : (s, p) \in b\Gamma, \; |a|=\sqrt{1-\frac{1}{4}|s|^2}\right\}.\] By Lemma \ref{pentagammainner}, for almost all $\lambda \in \Bbb{T}$, $(s(\lambda), p(\lambda)) \in b\Gamma$. Since $x = (a, s, p)$ is a $\overline{\mathcal{P}}$-inner function, we have, for almost all $\lambda \in \Bbb{T}$,
$$|a(\lambda)| = \sqrt{1-\frac{1}{4}|s(\lambda)|^2}.$$
Since $a_{in}a_{out} = a$ is the inner-outer factorization of $a$ and $|a_{in}(\lambda)| = 1$ for almost all $\lambda \in \Bbb{T}$,
$$|a_{out}(\lambda)| = \sqrt{1-\frac{1}{4}|s(\lambda)|^2} \; \text{ for almost all } \lambda \in \Bbb{T}.$$
Therefore $\widetilde{x} = (a_{out}, s, p)$ is a $\overline{\mathcal{P}}$-inner function.
\end{proof}

\section{Connections between rational $\Gamma$-inner and rational $\overline{\mathcal{P}}$-inner functions}\label{Connectiongammapenta}

\begin{thm}\label{Fejerthm}
\textbf{\textup{(Fej\'er-Riesz theorem)}} \textup{\cite[Section 53]{FN1990}} If $f(\lambda) = \sum^{n}_{i=-n}a_i\lambda^i$ is a trigonometric polynomial of degree $n$ such that $f(\lambda) \geq 0$ for all $\lambda \in \Bbb{T}$, then there exists an analytic polynomial $D(\lambda) = \sum^n_{i=0}b_i\lambda^i$ of degree $n$ such that $D$ is outer (that is, $D(\lambda)\neq 0$ for all $\lambda \in \Bbb{D}$) and
$$f(\lambda)=|D(\lambda)|^2$$
for all $\lambda \in \Bbb{T}$.
\end{thm}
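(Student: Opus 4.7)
The plan is to attach to $f$ an ordinary polynomial in $z$, exploit the Hermitian symmetry forced by $f$ being real, and then extract the outer square root by selecting the ``outer half'' of the zero set.

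First I would note that $f\ge 0$ on $\mathbb{T}$ forces $f$ to be real-valued on $\mathbb{T}$, which combined with $\overline{\lambda}=\lambda^{-1}$ on $\mathbb{T}$ gives the coefficient symmetry $\overline{a_i}=a_{-i}$ for all $i\in\{-n,\dots,n\}$. Setting
\[
P(z) \;=\; z^n f(z) \;=\; \sum_{i=-n}^{n} a_i\, z^{i+n},
\]
one obtains an analytic polynomial in $z$ of degree at most $2n$ satisfying $|P(\lambda)|=f(\lambda)$ for every $\lambda\in\mathbb{T}$. The coefficient symmetry yields the functional equation $P(z)=z^{2n}\,\overline{P(1/\overline{z})}$ for $z\ne 0$, and hence the zero-multiset of $P$ in $\mathbb{C}\setminus\{0\}$ is invariant under the involution $z\mapsto 1/\overline{z}$: zeros in $\mathbb{D}$ pair with zeros outside $\overline{\mathbb{D}}$, while zeros on $\mathbb{T}$ are self-paired. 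Non-negativity of $f$ on $\mathbb{T}$ forces any such $\mathbb{T}$-zero to have even multiplicity, for otherwise $f$ would change sign through it.

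Next I would select, from each reciprocal pair, the representative with modulus $\ge 1$, counting each $\mathbb{T}$-zero with half its multiplicity. This yields points $w_1,\dots,w_n$ with $|w_j|\ge 1$, and I would set $D(z)=c\prod_{j=1}^{n}(z-w_j)$ for a constant $c$ to be determined. By construction $D$ is a polynomial of degree $n$ with no zeros in $\mathbb{D}$, that is, $D$ is outer in the sense of the theorem. To verify $|D|^2=f$ on $\mathbb{T}$, I would use the elementary identity $|\lambda - 1/\overline{w}| = |\lambda - w|/|w|$, valid for $|\lambda|=1$ and $w\ne 0$, to compute
\[
|P(\lambda)| \;=\; |a_n|\prod_{j=1}^{n} |\lambda-w_j|\,\bigl|\lambda-\tfrac{1}{\overline{w_j}}\bigr| \;=\; \frac{|a_n|}{\prod_{j}|w_j|}\prod_{j=1}^{n}|\lambda-w_j|^{2},
\]
so fixing $|c|^{2}=|a_n|/\prod_{j}|w_j|$ gives $|D(\lambda)|^{2}=|P(\lambda)|=f(\lambda)$ on $\mathbb{T}$.

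The principal obstacle I anticipate is careful bookkeeping when $P$ has zeros at $0$ (which happens precisely when $a_{-n}=0$, so the effective bilateral degree of $f$ is less than $n$) or on $\mathbb{T}$; in each case one must confirm that the selection still produces a polynomial $D$ of degree exactly $n$ with every $|w_j|\ge 1$. Once the zero-pairing picture is in place, the remaining verifications are routine.
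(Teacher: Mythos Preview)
The paper does not supply a proof of this statement; it is quoted as a classical result from Riesz and Sz.-Nagy's \emph{Functional Analysis} and used as a black box in the construction of rational $\overline{\mathcal{P}}$-inner functions. Your proposal is the standard proof of the Fej\'er--Riesz theorem and is correct: the passage to the $2n$-symmetric polynomial $P(z)=z^n f(z)$, the reflection pairing $z\mapsto 1/\overline{z}$ of its zeros, the even multiplicity of unimodular zeros forced by $f\ge 0$, and the selection of the ``outer'' half of the zero set are exactly the classical ingredients. One small remark: since the statement specifies that $f$ has degree $n$, the Hermitian symmetry gives $a_n\neq 0$ and $a_{-n}\neq 0$ simultaneously, so $P$ genuinely has degree $2n$ and $P(0)\neq 0$; the degenerate bookkeeping you flag as a potential obstacle therefore does not arise here.
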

\index{Fej\'er-Riesz theorem}

Recall that for every $a \neq 0$ in $H^\infty(\Bbb{D})$ there is an outer-inner factorization. Rational inner functions can be written in the form $c\prod\limits_{i=1}^{n}B_{\alpha_{i}}$ for some $n \geq 1$ and $\alpha_1, \dots, \alpha_n \in \Bbb{D}$ and $c \in \Bbb{C}$.

\begin{defn}\label{Gammadefn3.1}
\textup{\cite[Definition 3.1]{ALY18}} The degree \textup{deg}$(h)$ of a rational $\Gamma$-inner function $h$ is defined to be $h_*(1)$, where $h_* : \Bbb{Z}=\pi_1(\Bbb{T}) \rightarrow \pi_1(b\Gamma)$ is the homomorphism of fundamental groups induced by $h$ when it is regarded as a continuous map from $\Bbb{T}$ to $b\Gamma$.
\end{defn}

Recall that, by \cite[Proposition 3.3]{ALY18},  for any rational $\Gamma$-inner function $h=(s, p)$, \textup{deg}$(h)$ is the degree \textup{deg}$(p)$ (in the usual sense) of the finite Blaschke product $p$.

\begin{defn}\label{polyfsimn}
Let $g$ be a polynomial of degree less than or equal to $n$, where $n \geq 0$. Then we define the polynomial $g^{\sim n}$ by $$g^{\sim n}(\lambda)= \lambda^n \overline{g(1/\overline{\lambda})}.$$
\end{defn}

\begin{defn}\label{degreeofrapeninn}
The degree of a rational $\overline{\mathcal{P}}$-inner function $x = (a, s, p)$ is defined to be the pair of numbers $(\textup{deg}\;a, \textup{deg}\;p)$. We say that \textup{deg} $x \leq (m, n)$ if \textup{deg} $a \leq m$ and \textup{deg} $p \leq n$.
\end{defn}

The next theorem provides a description of the structure of rational penta-inner functions of prescribed degree.

\begin{thm}\label{descripration}
Let $x=(a, s, p) : \Bbb{D} \rightarrow \overline{\mathcal{P}}$ be a rational penta-inner function of degree $(m, n)$. Let $a \neq 0$ and let an inner-outer factorization of $a$ be given by $a = a_{in} a_{out}$, where $a_{in}$ is an inner function and $a_{out}$ is an outer function. Then there exist polynomials $A, E, D$ such that
\begin{enumerate}
\item $deg(A), deg(E), deg(D) \leq n$,
\item $E^{\sim n} = E$,
\item $D(\lambda) \neq 0$ on $\overline{\Bbb{D}}$,
\item $|E(\lambda)| \leq 2|D(\lambda)|$ on $\overline{\Bbb{D}}$,
\item $A$ is an outer polynomial such that $|A(\lambda)|^2 = |D(\lambda)|^2 - \frac{1}{4}|E(\lambda)|^2$ on $\Bbb{T}$,
\item $a = a_{in} \dfrac{A}{D}$ on $\overline{\Bbb{D}}$,
\item $s = \dfrac{E}{D}$ on $\overline{\Bbb{D}}$,
\item $p = \dfrac{D^{\sim n}}{D}$ on $\overline{\Bbb{D}}$.

\end{enumerate}
\end{thm}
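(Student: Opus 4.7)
The plan is to reduce the structure problem for $x = (a,s,p)$ to the known description of rational $\Gamma$-inner functions for the pair $(s,p)$, and then recover the first coordinate $a$ via the Fej\'er--Riesz theorem.

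First I would apply Lemma~\ref{pentagammainner}(ii) to conclude that $h = (s,p):\mathbb{D}\to\Gamma$ is a rational $\Gamma$-inner function; since $\deg p = n$, Definition~\ref{Gammadefn3.1} identifies its degree as $n$. The known structural description of rational $\Gamma$-inner functions of degree $n$ from \cite{ALY13,ALY18} then produces polynomials $E$ and $D$ of degree at most $n$ with $D$ non-vanishing on $\overline{\mathbb{D}}$, $s = E/D$, and $p = D^{\sim n}/D$; any unimodular constant arising in a raw factorization of the finite Blaschke product $p$ is absorbed into $D$. The symmetry $E^{\sim n} = E$ is the polynomial translation of the boundary identity $s = \bar{s}p$ appearing in Proposition~\ref{symm-bidisc}. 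The bound $|E|\leq 2|D|$ on $\overline{\mathbb{D}}$ follows from $|s|\leq 2$ on $\overline{\mathbb{D}}$, which in turn is immediate from $(s(\lambda),p(\lambda))\in\Gamma$ and Proposition~\ref{symm-bidisc}(2). This settles items (1)--(4), (7), and (8).

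Next I would translate the penta-inner boundary condition for $x$ into a factorization problem for $a$. By Theorem~\ref{pentathm8.4}, the condition that $x$ is penta-inner yields $|a(\lambda)|^2 = 1 - \tfrac{1}{4}|s(\lambda)|^2$ for almost every $\lambda\in\mathbb{T}$. Multiplying by $|D(\lambda)|^2$ gives
\[
|D(\lambda)a(\lambda)|^2 \;=\; |D(\lambda)|^2 - \tfrac{1}{4}|E(\lambda)|^2 \quad \text{for a.e.\ } \lambda\in\mathbb{T},
\]
and the right-hand side is a non-negative trigonometric polynomial of degree at most $n$ (non-negativity is also consistent with $|E|\leq 2|D|$ from step~1). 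The Fej\'er--Riesz theorem (Theorem~\ref{Fejerthm}) therefore produces an outer polynomial $A$ of degree at most $n$ with $|A|^2 = |D|^2 - \tfrac{1}{4}|E|^2$ on $\mathbb{T}$, which is exactly condition (5).

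Finally I would identify the outer factor of $a$. Writing $a = a_{in}a_{out}$ and using $|a_{in}|=1$ a.e.\ on $\mathbb{T}$, we have $|Da_{out}| = |A|$ a.e.\ on $\mathbb{T}$. Since $D$ has no zeros in $\overline{\mathbb{D}}$ it is outer, hence $Da_{out}$ is an outer function in $H^\infty(\mathbb{D})$; $A$ is outer by construction. Two outer functions in $H^\infty(\mathbb{D})$ with equal boundary moduli differ only by a unimodular constant, which we absorb into $A$. Hence $Da_{out} = A$ and $a = a_{in}A/D$, delivering (6). The main obstacle I anticipate is the precision of step~1: arranging the $\Gamma$-inner normalization so that a single polynomial $D$ of degree $\leq n$ serves as the common denominator of $s$ and $p$ with $p = D^{\sim n}/D$ exactly (no stray unimodular constant). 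Once this normalization is fixed, the remaining steps are a routine Fej\'er--Riesz factorization combined with the uniqueness of outer functions with prescribed boundary modulus.
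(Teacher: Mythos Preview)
Your proposal is correct and follows essentially the same route as the paper's proof: reduce to the $\Gamma$-inner structure theorem \cite[Proposition 2.2]{ALY18} for $(s,p)$ to obtain $E$ and $D$, then apply Fej\'er--Riesz to $|D|^2-\tfrac14|E|^2$ and identify $a_{out}$ with $A/D$ via uniqueness of outer functions up to a unimodular constant. The only cosmetic difference is that the paper factors $a=a_{in}a_{out}$ before multiplying through by $|D|^2$, whereas you do so after; the normalization concern you flag in step~1 is precisely what \cite[Proposition 2.2]{ALY18} delivers.
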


\begin{proof}
Suppose that $x=(a, s, p)$ is a rational penta-inner function. By Lemma \ref{pentagammainner}, $h=(s, p)$ is a rational $\Gamma$-inner function. By \cite[Corollary 6.10]{ALY13}, $p$ can be written in the form
$$p(\lambda) = c\frac{\lambda^kD^{\sim (n-k)}(\lambda)}{D(\lambda)}$$
where $|c|=1$, $0 \leq k \leq n$ and $D$ is a polynomial of degree $n-k$ such that $D(0)=1$.
Therefore, by \cite[Proposition 2.2]{ALY18}, there exist polynomials $E$ and $D$ such that
\begin{eqnarray}\label{E-D-s-p}
\text{(i)}   & \text{deg}(E), \text{deg}(D) \leq n, \nonumber\\
\text{(ii)}  & E^{\sim n}=E,\nonumber \\
\text{(iii)} & D(\lambda) \neq 0 \textit{ on } \overline{\Bbb{D}},\nonumber\\
\text{(iv)}  & |E(\lambda)| \leq 2 |D(\lambda)| \textit{ on } \overline{\Bbb{D}},\\
\text{(v)}   & s=\dfrac{E}{D} \textit{ on } \overline{\Bbb{D}},\nonumber\\
\text{(vi)}  & p = \dfrac{D^{\sim n}}{D} \text{ on } \overline{\Bbb{D}}.\nonumber
\end{eqnarray}
By assumption $x = (a, s, p)$ is a $\overline{\mathcal{P}}$-inner function, and so, for almost all $\lambda \in \Bbb{T}, \; (a(\lambda), s(\lambda), p(\lambda)) \in b\overline{\mathcal{P}}$, which implies
$$|a_{out}(\lambda)|^2 = 1-\frac{1}{4}|s(\lambda)|^2, \; \text{ since } |a_{in}(\lambda)| = 1 \text{ almost everywhere on } \Bbb{T}.$$
Thus
$$|a_{out}(\lambda)|^2 = 1-\frac{1}{4}\frac{|E(\lambda)|^2}{|D(\lambda)|^2} \ \ \text{ since } s(\lambda) = \frac{E(\lambda)}{D(\lambda)},$$
and so,
\begin{equation}\label{equation1}
|a_{out}(\lambda)|^2|D(\lambda)|^2 = |D(\lambda)|^2-\frac{1}{4}|E(\lambda)|^2.
\end{equation}
By \cite[Proposition 2.2]{ALY18}, $|E(\lambda)| \leq 2|D(\lambda)|$.
By the Fej\'er-Riesz Theorem, since $|D(\lambda)|^2-\frac{1}{4}|E(\lambda)|^2 \geq 0$, there exists an analytic polynomial $A$ of degree $\leq n$ such that $A$ is outer and
\begin{equation}\label{equation2}
|A(\lambda)|^2=|D(\lambda)|^2-\frac{1}{4}|E(\lambda)|^2
\end{equation}
for all $\lambda \in \Bbb{T}$. \\
From equations \eqref{equation1} and \eqref{equation2} we have,
$|A(\lambda)|^2 = |a_{out}(\lambda)|^2 |D(\lambda)|^2$. Note that $D(\lambda)\neq 0$ on $\overline{\Bbb{D}}$. Thus $|a_{out}(\lambda)| = \left|\dfrac{A}{D}(\lambda)\right|$ for $\lambda \in \Bbb{T}$, and so $\dfrac{A}{D}$ is an outer function such that $|a(\lambda)| = \left|\dfrac{A}{D}(\lambda)\right|$ for almost all $\lambda \in \Bbb{T}$. Since outer factors are unique up to unimodular constant multiples, there exists $\omega \in \Bbb{T}$ such that
$$a_{out}(\lambda) = \omega\dfrac{A(\lambda)}{D(\lambda)}.$$
Therefore $a = a_{in} \dfrac{A}{D}$ on $\overline{\Bbb{D}}$, after replacement of $A$ by $ \omega A$. 
\end{proof}

\begin{rem} 
Results similar to our Theorem \ref{descripration} were  announced on ArXiv in \cite{AJPK}.
\end{rem}

\begin{eg}\label{D,E,A,ai}  Consider  a rational $\overline{\mathcal{P}}$-inner function  $x(\lambda) = (\lambda^m, 0, \lambda)$ for $\lambda \in \Bbb{D}$. It is easy to see that polynomials described in Theorem \ref{descripration}
for this function are the following: $E(\lambda) = 0$, $D(\lambda) = 1$, $D^{\sim 1}(\lambda) = \lambda$, $A(\lambda) = 1$. Since $a(\lambda) = \lambda^m$ is inner, $a_{in}= a$ and so  $a_{in}(\lambda) = \lambda^m$. 
\end{eg}

\begin{thm}\label{prop446}
Let $h = (s, p) : \Bbb{D} \rightarrow \Gamma$ be a rational $\Gamma$-inner function of degree $n$. Let $E, D$ be defined by equations \eqref{E-D-s-p}
{\rm (\cite[Proposition 2.2]{ALY18})}. Let $A$ be an outer polynomial such that
\begin{equation}\label{|A(lambda)|^2}
|A(\lambda)|^2 = |D(\lambda)|^2-\frac{1}{4}|E(\lambda)|^2.
\end{equation}
Then, for every finite Blaschke product $B$ and $|c|=1$, $x=\left(c B \dfrac{A}{D}, \dfrac{E}{D}, \dfrac{D^{\sim n}}{D}\right)$ is a rational $\overline{\mathcal{P}}$-inner function.
\end{thm}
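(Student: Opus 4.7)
The plan is to verify the two defining requirements for $x$ to be rational $\overline{\mathcal{P}}$-inner: that $x(\mathbb{T}) \subset b\overline{\mathcal{P}}$ and that $x(\lambda) \in \overline{\mathcal{P}}$ for every $\lambda \in \mathbb{D}$. I would establish the boundary behavior first and then pull the interior bound back from the boundary via the maximum modulus principle applied to $\Psi_z$.

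For $\lambda \in \mathbb{T}$, I would check the four conditions of the formula \eqref{bP} for $b\overline{\mathcal{P}}$ in turn. The conditions $|p(\lambda)| = 1$ and $|s(\lambda)| \leq 2$ are immediate from $|D^{\sim n}(\lambda)| = |D(\lambda)|$ on $\mathbb{T}$ together with the bound $|E| \leq 2|D|$ on $\overline{\mathbb{D}}$. The identity $s = \overline{s}\,p$ on $\mathbb{T}$ follows from a direct calculation using $\overline{q(\lambda)} = \lambda^{-n} q^{\sim n}(\lambda)$ on $\mathbb{T}$ applied to $E$ and $D$; the symmetry $E^{\sim n} = E$ is the key ingredient that makes this work. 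Finally, $|cB(\lambda)| = 1$ on $\mathbb{T}$, so
\begin{equation*}
|a(\lambda)|^2 \;=\; \frac{|A(\lambda)|^2}{|D(\lambda)|^2} \;=\; 1 - \frac{1}{4}\frac{|E(\lambda)|^2}{|D(\lambda)|^2} \;=\; 1 - \frac{1}{4}|s(\lambda)|^2
\end{equation*}
by \eqref{|A(lambda)|^2}, which completes the verification that $x(\mathbb{T}) \subset b\overline{\mathcal{P}}$.

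For the interior step I would invoke Theorem \ref{pentathm5.3}(4): it suffices to show that $|\Psi_z(x(\lambda))| \leq 1$ for every fixed $z \in \mathbb{D}$ and every $\lambda \in \mathbb{D}$. Because $h = (s, p)$ is $\Gamma$-inner, $(s(\lambda), p(\lambda))$ belongs to $\Gamma$ throughout $\overline{\mathbb{D}}$; writing $(s(\lambda), p(\lambda)) = (\zeta_1(\lambda) + \zeta_2(\lambda),\, \zeta_1(\lambda)\zeta_2(\lambda))$ with $|\zeta_i(\lambda)| \leq 1$, one factors
\begin{equation*}
1 - s(\lambda) z + p(\lambda) z^2 \;=\; \bigl(1 - \zeta_1(\lambda) z\bigr)\bigl(1 - \zeta_2(\lambda) z\bigr),
\end{equation*}
which is nonvanishing on $\overline{\mathbb{D}}$ since $|z| < 1$. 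Consequently $g_z(\lambda) := \Psi_z(x(\lambda))$ is a rational function, analytic on $\overline{\mathbb{D}}$. The boundary step gives $x(\lambda) \in b\overline{\mathcal{P}} \subset \overline{\mathcal{P}}$ on $\mathbb{T}$, so $|g_z(\lambda)| \leq 1$ on $\mathbb{T}$ by Theorem \ref{pentathm5.3} once more, and the maximum modulus principle transfers this bound to all of $\mathbb{D}$.

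The main thing to watch is the algebra behind $s = \overline{s}\,p$ on $\mathbb{T}$ (which genuinely uses $E^{\sim n} = E$) and the nonvanishing of $1 - sz + pz^2$ on $\overline{\mathbb{D}}$; both rest on the $\Gamma$-inner structure \eqref{E-D-s-p}. After those are settled, the rest is a clean appeal to Theorem \ref{pentathm5.3} and the maximum principle, with the Fej\'er-Riesz factorization \eqref{|A(lambda)|^2} producing the modulus identity for $a$ on $\mathbb{T}$ essentially for free.
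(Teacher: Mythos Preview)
Your proposal is correct and follows essentially the same route as the paper: verify $x(\mathbb{T})\subset b\overline{\mathcal{P}}$ via the modulus identity $|A|^2=|D|^2-\tfrac14|E|^2$, then use Theorem~\ref{pentathm5.3}(4) and the maximum principle applied to $\lambda\mapsto\Psi_z(x(\lambda))$ to pull the bound into $\mathbb{D}$. The only cosmetic differences are that the paper cites the $\Gamma$-inner hypothesis directly for the $b\Gamma$ conditions (rather than rederiving them from $E^{\sim n}=E$ and $|D^{\sim n}|=|D|$ on $\mathbb{T}$), and on $\mathbb{T}$ the paper explicitly computes $|a|=\bigl|1-\tfrac{\frac12 s\overline{\beta}}{1+\sqrt{1-|\beta|^2}}\bigr|$ with $\beta=\tfrac12 s$ before invoking Theorem~\ref{pentathm5.3}, whereas you more directly note $b\overline{\mathcal{P}}\subset\overline{\mathcal{P}}$.
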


\begin{proof}
Let $a, s, p$ be defined by
$$a = c B \dfrac{A}{D}, \; \; \; s = \dfrac{E}{D} \; \; \; \text{ and } p = \dfrac{D^{\sim n}}{D}.$$
Let us show that $x = (a, s, p)$ is a rational $\overline{\mathcal{P}}$-inner function. We have to prove that $x : \Bbb{D}\rightarrow \overline{\mathcal{P}}$ and, for almost all $\lambda \in \Bbb{T}, \; x(\lambda) \in b\overline{\mathcal{P}}$. \\
By assumption $h = (s, p) : \Bbb{D} \rightarrow \Gamma$ is a rational $\Gamma$-inner function, which means $|p(\lambda)| = 1, \ |s(\lambda)| \leq 2 \ \text{ and } \ (\overline{s}p)(\lambda) = s(\lambda)$, for almost all $\lambda \in \Bbb{T}$. Now we need to show that for almost all $\lambda \in \Bbb{T}, \; |a(\lambda)| = \sqrt{1-\frac{1}{4}|s(\lambda)|^2}$. For almost all $\lambda \in \Bbb{T}$,
\begingroup
\addtolength{\jot}{1em}
\begin{align*}
|a(\lambda)|^2 & = \left|c B(\lambda) \dfrac{A(\lambda)}{D(\lambda)}\right|^2 = \dfrac{|A(\lambda)|^2}{|D(\lambda)|^2} \; \; \; \text{ (since } |c| = 1 \text{ and } |B(\lambda)| = 1 \text{ on } \Bbb{T}) \\
& = \dfrac{|D(\lambda)|^2-\frac{1}{4}|E(\lambda)|^2}{|D(\lambda)|^2} = 1-\frac{1}{4}\left|\frac{E(\lambda)}{D(\lambda)}\right|^2 \\
& = 1-\frac{1}{4}|s(\lambda)|^2.
\end{align*}
\endgroup

Let us show that $x = (a, s, p) = \left(c B \dfrac{A}{D}, \dfrac{E}{D}, \dfrac{D^{\sim n}}{D}\right)$ maps $\Bbb{D}$ to $\overline{\mathcal{P}}$, that is, $x(\lambda) = (a(\lambda), s(\lambda), p(\lambda)) \in \overline{\mathcal{P}}$ for all $\lambda \in \Bbb{D}$. By the construction, $D(\lambda) \neq 0$ on $\overline{\Bbb{D}}$, and so $(a(\lambda), s(\lambda), p(\lambda))$ is analytic on $\Bbb{D}$. By Theorem \ref{pentathm5.3}, for each $\lambda \in \Bbb{D}, \; x(\lambda) \in \overline{\mathcal{P}}$ if and only if $|\Psi_z(x(\lambda))| \leq 1 \text{ for all } z \in \Bbb{D},$ where
\begin{align*}
\Psi_z(x(.)) : & \; \Bbb{D} \rightarrow \Bbb{C} \\
                & \; \lambda \mapsto (1-|z|^2) \; \frac{a(\lambda)}{1-s(\lambda)z+p(\lambda)z^2}.
\end{align*}
For all $\lambda \in \Bbb{D}$, $(s(\lambda), p(\lambda)) \in \Gamma$, and so $1-s(\lambda)z+p(\lambda)z^2\neq 0$ for all $z \in \Bbb{D}$. Hence, for every $z \in \Bbb{D},\; \Psi_z(x(.))$ is analytic on $\Bbb{D}$. For fixed $z \in \Bbb{D}$, by the maximum principle, to prove that $|\Psi_z(x(\lambda))| \leq 1$ for all $\lambda \in \Bbb{D}$, it suffices to show that $|\Psi_z(x(\lambda))| \leq 1$ for all $\lambda \in \Bbb{T}$. We have shown above that, for almost all $\lambda \in \Bbb{T}, \; (a(\lambda), s(\lambda), p(\lambda)) \in b\overline{\mathcal{P}}$. Thus, for all $\lambda \in \Bbb{T}$, $|a(\lambda)| = \sqrt{1-\frac{1}{4}|s(\lambda)|^2}, \; |p(\lambda)| = 1, \; |s(\lambda)| \leq 2$ and $s(\lambda) = \overline{s(\lambda)}p(\lambda)$, and so $(s(\lambda), p(\lambda)) = (\beta+\overline{\beta}p, p)(\lambda) \in b\Gamma$, where $\beta(\lambda)=\frac{1}{2}s(\lambda)$. One can see that, for all $\lambda \in \Bbb{T}$,
\begin{align*}
\left|1-\dfrac{\frac{1}{2}s(\lambda)\overline{\beta}(\lambda)}{1+\sqrt{1-|\beta(\lambda)|^2}}\right| & = 
\left|1-\frac{\frac{1}{4}|s(\lambda)|^2}{1+\sqrt{1-\frac{1}{4}|s(\lambda)|^2}}\right| \\
& =\left|\frac{1+\sqrt{1-\frac{1}{4}|s(\lambda)|^2}-\frac{1}{4}|s(\lambda)|^2}{1+\sqrt{1-\frac{1}{4}|s(\lambda)|^2}}\right| \\
& = \left|\frac{\sqrt{1-\frac{1}{4}|s(\lambda)|^2} \left( 1 +\sqrt{1-\frac{1}{4}|s(\lambda)|^2} \right)}{1+\sqrt{1-\frac{1}{4}|s(\lambda)|^2}}\right| \\
& = \sqrt{1-\frac{1}{4}|s(\lambda)|^2} = |a(\lambda)|.
\end{align*}
By Theorem \ref{pentathm5.3} (3) $\Leftrightarrow$ (5), for each $\lambda \in \Bbb{T}$,
$$|a(\lambda)| \leq \left|1-\dfrac{\frac{1}{2}s(\lambda)\overline{\beta}(\lambda)}{1+\sqrt{1-|\beta(\lambda)|^2}}\right| \text{ if and only if } |\Psi_z(a(\lambda), s(\lambda), p(\lambda))| \leq 1 \; \text{for all} \; z \in  \Bbb{D}.$$
Hence, by the maximum principle, for all $z, \lambda \in \Bbb{D}$, $|\Psi_z(a(\lambda), s(\lambda), p(\lambda))| \leq 1$. Thus, by Theorem \ref{pentathm5.3}, $x(\lambda) = (a(\lambda), s(\lambda), p(\lambda)) \in \overline{\mathcal{P}}$ for all $\lambda \in \Bbb{D}$.
\end{proof}

\begin{thm}{\bf \textup{(Converse to Theorem \ref{descripration})}}\label{converseofthm443}
Suppose polynomials $A, E, D$ satisfy
\begin{enumerate}
\item $deg(A), deg(E), deg(D) \leq n$,
\item $E^{\sim n} = E$,
\item $D(\lambda) \neq 0$ on $\overline{\Bbb{D}}$,
\item $|E(\lambda)| \leq 2|D(\lambda)|$ on $\overline{\Bbb{D}}$,
\item $A$ is an outer polynomial such that $|A(\lambda)|^2 = |D(\lambda)|^2-\frac{1}{4}|E(\lambda)|^2$ for $\lambda \in \Bbb{T}$,
\item $a_{in}$ is a rational inner function on $\Bbb{D}$ of degree $\leq m$.
\end{enumerate}
Let $a, s, p$ be defined by
$$a = a_{in}\dfrac{A}{D}, \; \; \; s = \dfrac{E}{D} \; \; \; \text{ and } \; \; \; p = \dfrac{D^{\sim n}}{D} \; \; \; \text{ on } \overline{\Bbb{D}}.$$
Then
$$x = \left(a_{in} \dfrac{A}{D}, \dfrac{E}{D},\dfrac{D^{\sim n}}{D}\right)$$
is a rational $\overline{\mathcal{P}}$-inner function of degree less than or equal $(m+n, n)$.
\end{thm}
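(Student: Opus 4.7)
The strategy is to reduce the theorem to Theorem~\ref{prop446}, the bulk of the work being to identify $h := (E/D,\, D^{\sim n}/D)$ as a rational $\Gamma$-inner function of degree at most $n$; once this is established, Theorem~\ref{prop446} supplies the $\overline{\mathcal{P}}$-inner function $x$, and the degree bound follows by a routine count.

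First I would verify that $h = (s, p)$ with $s = E/D$ and $p = D^{\sim n}/D$ is rational $\Gamma$-inner of degree $\leq n$. This is essentially the converse direction of \cite[Proposition~2.2]{ALY18}, and I would establish it directly from the hypotheses. Since $D$ has no zeros on $\overline{\D}$ by (3), both $s$ and $p$ are rational with no poles in $\overline{\D}$, hence analytic on a neighbourhood of $\overline{\D}$. On $\T$, the identity $D^{\sim n}(\lambda) = \lambda^n\overline{D(\lambda)}$ gives $|p(\lambda)| = 1$; the palindromic condition $E^{\sim n} = E$ from (2) yields $\overline{E(\lambda)} = \bar\lambda^n E(\lambda)$, from which a short computation produces $\overline{s(\lambda)}\,p(\lambda) = s(\lambda)$; and (4) gives $|s(\lambda)| \leq 2$. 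By Proposition~\ref{symm-bidisc}(3), $h(\T) \subset b\Gamma$. To conclude $h(\D) \subset \Gamma$, I would invoke the polynomial convexity of $\Gamma$: since $h$ is analytic on a neighbourhood of $\overline{\D}$, the maximum principle applied to $P \circ h$ for every polynomial $P$ on $\C^2$ places $h(\overline{\D})$ inside the polynomial hull of $h(\T)$, which lies in $\Gamma$.

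Given this, Theorem~\ref{prop446} applies directly. The outer polynomial $A$ in hypothesis (5) is precisely the one required by Theorem~\ref{prop446}, and by the classical Blaschke factorization the rational inner function $a_{in}$ of hypothesis (6) admits the representation $a_{in} = cB$ with $c \in \T$ and $B$ a finite Blaschke product of degree $\leq m$. Theorem~\ref{prop446} then yields that
\[
x = \left(a_{in}\,\frac{A}{D},\; \frac{E}{D},\; \frac{D^{\sim n}}{D}\right)
\]
is a rational $\overline{\mathcal{P}}$-inner function. The degree bound is immediate: $\deg p \leq n$ since $\deg D \leq n$, while $\deg a \leq \deg a_{in} + \deg(A/D) \leq m + n$, so $\deg x \leq (m+n, n)$.

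The main obstacle is the very first step---showing that $(E/D,\, D^{\sim n}/D)$ lands in $\Gamma$ throughout $\D$, not merely in $b\Gamma$ on $\T$. The boundary calculations are a direct unwinding of hypotheses (2)--(4), but promoting the conclusion from $\T$ to $\D$ requires either a polynomial-convexity/maximum-principle argument as sketched or an appeal to the existing classification of rational $\Gamma$-inner functions in \cite{ALY13,ALY18}. Once this is done, the remainder of the proof is a mechanical application of Theorem~\ref{prop446} combined with the classical Blaschke factorization of rational inner functions on $\D$.
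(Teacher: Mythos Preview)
Your proposal is correct and follows essentially the same approach as the paper: the paper's proof simply invokes the converse of \cite[Proposition~2.2]{ALY18} to conclude that $h=(E/D,\,D^{\sim n}/D)$ is rational $\Gamma$-inner of degree at most $n$, notes that rational inner functions on $\D$ are finite Blaschke products, and applies Theorem~\ref{prop446}. The only difference is that you spell out (or sketch via polynomial convexity) the content of that cited converse, whereas the paper just quotes it.
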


\begin{proof}
By the converse of \cite[Proposition 2.2]{ALY18}, $h = (s, p)$, where
$$s = \dfrac{E}{D} \; \; \; \text{ and } \; \; \; p = \dfrac{D^{\sim n}}{D},$$
is a rational $\Gamma$-inner function of degree at most $n$. Since the rational inner functions on $\Bbb{D}$ are precisely the finite Blaschke products, the statement of the theorem follows from Theorem \ref{prop446}.\\
\end{proof}

\section{Construction of rational $\overline{\mathcal{P}}$-inner functions}

In this section we describe an algorithm for the construction of rational $\overline{\mathcal{P}}$-inner function from certain interpolation data. Firstly we recall some notions and statements from \cite{ALY18} which were useful for the construction of rational $\Gamma$-inner functions.

\begin{defn}\label{Gammaroyalpoly}
\textup{\cite[Page 140]{ALY18}} Let $h =(s, p)$ be a rational $\Gamma$-inner function of degree $n$. Let $E$ and $D$ be as in 
equations \eqref{E-D-s-p} {\rm (\cite[Proposition 2.2]{ALY18})}. The royal polynomial $R_h$ of $h$ is defined by
\begin{equation}\label{R_h}
R_h(\lambda) = 4D(\lambda)D^{\sim n}(\lambda)-E(\lambda)^2.
\end{equation}
\end{defn}

We call the points $\lambda \in \overline{\Bbb{D}}$ such that $h(\lambda) \in \mathcal{R}_{\Gamma}$ the {\em royal nodes} of $h$ and, for such $\lambda$, we call $h(\lambda)$ a {\em royal point} of $h$, that is, $4p(\lambda)-s(\lambda)^2=0$.
Since $D(\lambda)\neq 0$ on $\overline{\Bbb{D}}$, the royal nodes of $h$ exactly correspond to the zeros of the royal polynomial $R_h$. Hence, $\lambda \in \overline{\Bbb{D}}$ is a royal node of $h$ if and only if $R_h(\lambda) = 0$.

\begin{defn}\label{Gammadefn3.4}
\textup{\cite[Definition 3.4]{ALY18}} We say that a polynomial $f$ is $n$-symmetric if \textup{deg}$(f)\leq n$ and $f^{\sim n} = f$. For any set $E \subset \Bbb{C}$, \textup{ord}$_E(f)$ will denote the number of zeros of $f$ in $E$, counted with multiplicity, and \textup{ord}$_0(f)$ will mean the same as \textup{ord}$_{\{0\}}(f)$.
\end{defn}

\begin{defn}\label{Gammadefn4.1}
\textup{\cite[Definition 4.1]{ALY18}} A nonzero polynomial $R$ is $n$-balanced if \textup{deg}$(R) \leq 2n, \; R$ is $2n$-symmetric and $\lambda^{-n}R(\lambda) \geq 0$ for all $\lambda \in \Bbb{T}$.
\end{defn}

\begin{prop}\label{Gammaprop3.5}
\textup{\cite[Proposition 3.5]{ALY18}} Let $h$ be a rational $\Gamma$-inner function of degree $n$ and let $R_h$ be the royal polynomial of $h$ as defined by equation \eqref{R_h}. Then $R_h$ is $2n$-symmetric and the zeros of $R_h$ that lie on $\Bbb{T}$ have either even or infinite order.
\end{prop}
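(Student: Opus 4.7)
The plan is to establish the two assertions separately, exploiting the explicit form $R_h = 4DD^{\sim n} - E^2$ and the properties (i)--(iv) of the polynomials $E,D$ listed in the proof of Theorem \ref{descripration}.

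First I would verify $2n$-symmetry. The degree bound $\deg R_h \le 2n$ is immediate from $\deg D,\deg D^{\sim n},\deg E \le n$. For the identity $R_h^{\sim 2n}=R_h$, I would use the standard facts that $(fg)^{\sim(n+m)} = f^{\sim n}g^{\sim m}$ whenever $\deg f \le n$ and $\deg g \le m$, that $(D^{\sim n})^{\sim n}=D$, and that $E^{\sim n}=E$ by (ii). Thus
\begin{align*}
R_h^{\sim 2n} &= 4\,D^{\sim n}\,(D^{\sim n})^{\sim n} - (E^{\sim n})^2 = 4\,D^{\sim n}\,D - E^2 = R_h.
\end{align*}

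Second, I would analyze the zeros of $R_h$ on $\T$ by showing that $\lambda^{-n}R_h(\lambda)\ge 0$ for $\lambda\in\T$, i.e.\ that $R_h$ is $n$-balanced in the sense of Definition \ref{Gammadefn4.1}. For $\lambda\in\T$ we have $1/\bar\lambda=\lambda$, hence $D^{\sim n}(\lambda)=\lambda^n\overline{D(\lambda)}$, and the $n$-symmetry of $E$ gives $E(\lambda)=\lambda^n\overline{E(\lambda)}$, so $E(\lambda)^2=\lambda^n|E(\lambda)|^2$. Substituting,
\begin{equation*}
\lambda^{-n}R_h(\lambda)=4|D(\lambda)|^2-|E(\lambda)|^2\qquad(\lambda\in\T),
\end{equation*}
which is non-negative by property (iv).

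Finally, I would deduce the parity statement from the non-negativity on $\T$. Writing $\lambda=e^{i\theta}$, the function $g(\theta):=e^{-in\theta}R_h(e^{i\theta})$ is a real-valued non-negative trigonometric polynomial. If $\lambda_0=e^{i\theta_0}\in\T$ is a zero of $R_h$ of exact finite order $k$, the expansion $e^{i\theta}-e^{i\theta_0}=2ie^{i(\theta+\theta_0)/2}\sin\tfrac{\theta-\theta_0}{2}$ shows that $g(\theta)\sim C(\theta-\theta_0)^k$ near $\theta_0$ with $C\neq 0$. Non-negativity forces $k$ to be even; if $R_h$ vanishes to infinite order at $\lambda_0$ (which, since $R_h$ is a polynomial, would mean $R_h\equiv 0$), we record this as the alternative.

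The proof is essentially bookkeeping; the only genuine step is the identity $\lambda^{-n}R_h(\lambda)=4|D(\lambda)|^2-|E(\lambda)|^2$ on $\T$, after which the parity statement follows from the elementary principle that a non-negative real-analytic function of one real variable can vanish only to even order. No serious obstacle is anticipated.
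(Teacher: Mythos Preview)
Your proof is correct. Note that the paper does not supply its own proof of this proposition; it is quoted verbatim from \cite[Proposition 3.5]{ALY18}, so there is no in-paper argument to compare against. Your approach---verifying $2n$-symmetry directly from the multiplicativity of the $\sim$-operation, then establishing $\lambda^{-n}R_h(\lambda)=4|D(\lambda)|^2-|E(\lambda)|^2\ge 0$ on $\T$ and invoking the elementary fact that a non-negative real-analytic function on $\R$ vanishes only to even order---is precisely the natural one and matches the argument in the cited source.
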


\begin{defn}\label{Gammadefn3.6}
\textup{\cite[Definition 3.6]{ALY18}} Let $h$ be a rational $\Gamma$-inner function such that $h(\overline{\Bbb{D}})\nsubseteq \mathcal{R}_\Gamma \cap \Gamma$ and let $R_h$ be the royal polynomial of $h$. If $\sigma$ is a zero of $R_h$ of order $\ell$, we define the multiplicity $\#\sigma$ of $\sigma$ (as a royal node of $h$) by
\begin{equation*}
\#\sigma = \begin{cases}
\ell & \text{ if } \sigma \in \Bbb{D} \\
\frac{1}{2}\ell & \text{ if } \sigma \in \Bbb{T}.
\end{cases}
\end{equation*}
\end{defn}

We next present a description of rational penta-inner functions $(a,s,p)$ in terms of the zeros of $a$, $s$ and $s^2 -4p$.

\begin{thm}\label{constructpentainnfunc}
Suppose that $\alpha_1, \alpha_2, \dots, \alpha_{k_0} \in \Bbb{D}$ and $\eta_1, \eta_2, \dots, \eta_{k_1} \in \Bbb{T}$, where $2k_0+k_1=n$ and suppose that $\beta_1, \beta_2, \dots, \beta_m \in \Bbb{D}$. Suppose that $\sigma_1, \dots, \sigma_n$ in $\overline{\Bbb{D}}$ are distinct from $\eta_1, \dots, \eta_{k_1}$. Then there exists a rational $\overline{\mathcal{P}}$-inner function $x = (a, s, p)$ of degree less than or equal $(m+n, n)$ such that
\begin{enumerate}
\item the zeros of $a$ in $\Bbb{D}$, repeated according to multiplicity, are $\beta_1, \beta_2, \dots, \beta_m$,
\item the zeros of $s$ in $\overline{\Bbb{D}}$, repeated according to multiplicity, are $\alpha_1, \alpha_2, \dots, \alpha_{k_0}$ and $\eta_1, \eta_2, \dots, \eta_{k_1}$,
\item the royal nodes of $(s, p)$ are $\sigma_1, \dots, \sigma_n$.
\end{enumerate}
Such a function $x$ can be constructed as follows. Let $t_+ \; \textgreater \; 0$ and let $t \in \Bbb{R} \setminus \{0\}$. Let $R$ and $E$ be defined by
$$R(\lambda) = t_{+} \prod_{j=1}^{n} (\lambda-\sigma_j)(1-\overline{\sigma_j}\lambda),$$
$$E(\lambda) = t\prod_{j=1}^{k_0} (\lambda-\alpha_j)(1-\overline{\alpha_j}\lambda) \prod_{j=1}^{k_1} i\textup{e}^{-i \theta_j/2}(\lambda-\eta_j)$$
where $\eta_j = \textup{e}^{i\theta_j}, \; 0 \leq \theta_j \; \textless \; 2\pi$.
Let $a_{in} : \Bbb{D} \rightarrow \overline{\Bbb{D}}$ be defined by
\begin{equation}\label{a_in}
a_{in}(\lambda) = c\prod_{i=1}^{m} B_{\beta_i}(\lambda),
\end{equation}
where $|c|=1$ and $\beta_i \in \Bbb{D}, \; i=1, \dots, m$.\\

{\rm (i)}  There exist outer polynomials $D$ and $A$ of degree at most $n$ such that
\begin{equation}\label{RED}
\lambda^{-n}R(\lambda)+|E(\lambda)|^2 = 4|D(\lambda)|^2
\end{equation}
and
\begin{equation}\label{RA}
\lambda^{-n}R(\lambda) = 4|A(\lambda)|^2
\end{equation}
for all $\lambda \in \Bbb{T}$.\\

{\rm (ii)} The function $x$ defined by
\begin{equation}\label{x}
x=(a, s, p)=\Big(a_{in}\frac{A}{D}, \frac{E}{D}, \frac{D^{\sim n}}{D}\Big)
\end{equation}
is a rational $\overline{\mathcal{P}}$-inner function such that \textup{deg}$(x) \leq (m+n, n)$ and conditions \textup{(1), (2)} and \textup{(3)} hold. The royal polynomial of $(s, p)$ is $R$.
\end{thm}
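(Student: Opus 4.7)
The overall plan is to use the Fej\'er-Riesz theorem to build the required outer polynomials $D$ and $A$, and then assemble the rational $\overline{\mathcal{P}}$-inner function via Theorem \ref{converseofthm443}. The identification of the royal nodes and the zero sets is a bookkeeping check once the pieces are in place.

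For part (i), I would first show that $R$ is $n$-balanced. For $\lambda\in\mathbb{T}$, since $\overline{\lambda}=\lambda^{-1}$, each factor satisfies $(\lambda-\sigma_j)(1-\overline{\sigma_j}\lambda)=\lambda|\lambda-\sigma_j|^2$, so
\[
\lambda^{-n}R(\lambda)=t_+\prod_{j=1}^{n}|\lambda-\sigma_j|^2\ge 0.
\]
Next I would check that $|E(\lambda)|^2$ is a trigonometric polynomial of degree $n$. Thus $\lambda^{-n}R(\lambda)+|E(\lambda)|^2$ and $\lambda^{-n}R(\lambda)$ are both non-negative trigonometric polynomials of degree $n$ on $\mathbb{T}$, and an application of the Fej\'er-Riesz theorem (Theorem \ref{Fejerthm}) produces outer polynomials $D$ and $A$ of degree at most $n$ satisfying \eqref{RED} and \eqref{RA}.

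For part (ii), I would invoke Theorem \ref{converseofthm443}, and therefore need to verify its six hypotheses for the polynomials $A, E, D$ and the inner function $a_{in}$. The only nonroutine points are: (a) the identity $E^{\sim n}=E$, proved by observing that each factor $(\lambda-\alpha_j)(1-\overline{\alpha_j}\lambda)$ is $2$-symmetric and that each factor $i\mathrm{e}^{-i\theta_j/2}(\lambda-\eta_j)$ is $1$-symmetric, so the product of degree $2k_0+k_1=n$ is $n$-symmetric; (b) $D$ is zero-free on the whole of $\overline{\mathbb{D}}$, not merely on $\mathbb{D}$; and (c) the inequality $|E(\lambda)|\le 2|D(\lambda)|$ on $\overline{\mathbb{D}}$. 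For (b), any zero of $D$ on $\mathbb{T}$ would, by \eqref{RED}, force $\lambda$ to be simultaneously a zero of $\lambda^{-n}R$ and of $E$. Zeros of $\lambda^{-n}R$ on $\mathbb{T}$ occur only at $\sigma_j\in\mathbb{T}$, while zeros of $E$ on $\mathbb{T}$ occur only at the $\eta_j$; the hypothesis that $\sigma_j\ne\eta_\ell$ precludes this. For (c), $|E|\le 2|D|$ holds on $\mathbb{T}$ by \eqref{RED}, and then the maximum principle applied to the analytic function $E/D$ on $\mathbb{D}$ (analytic because $D$ is non-vanishing on $\overline{\mathbb{D}}$) yields the inequality on all of $\overline{\mathbb{D}}$. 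The remaining conditions are immediate.

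It remains to verify properties (1), (2), (3) and to identify $R$ with the royal polynomial $R_{(s,p)}$. Since $A$ is outer and $D$ is zero-free on $\overline{\mathbb{D}}$, the zeros of $a=a_{in}A/D$ in $\mathbb{D}$ are exactly the zeros of the Blaschke product $a_{in}$, giving (1). Similarly the zeros of $s=E/D$ in $\overline{\mathbb{D}}$ are precisely those of $E$ in $\overline{\mathbb{D}}$, namely $\alpha_1,\dots,\alpha_{k_0},\eta_1,\dots,\eta_{k_1}$, giving (2). For (3), the crucial computation is that on $\mathbb{T}$ one has $\overline{D(\lambda)}=\lambda^{-n}D^{\sim n}(\lambda)$ and, using $E^{\sim n}=E$, also $|E(\lambda)|^2=\lambda^{-n}E(\lambda)^2$; substituting into \eqref{RED} and clearing the factor $\lambda^{-n}$ gives
\[
4D(\lambda)D^{\sim n}(\lambda)-E(\lambda)^2=R(\lambda)\quad\text{for all }\lambda\in\mathbb{T}.
\]
Both sides are polynomials of degree at most $2n$, so they agree as polynomials. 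The left-hand side is precisely the royal polynomial of $h=(s,p)$, so its zeros in $\overline{\mathbb{D}}$, which are $\sigma_1,\dots,\sigma_n$, are the royal nodes of $(s,p)$. The main obstacle I anticipate is the careful handling of $D$ on $\mathbb{T}$, since the whole construction of Theorem \ref{converseofthm443} requires $D$ to be non-vanishing there, and this is exactly where the distinctness hypothesis on the $\sigma_j$ and $\eta_\ell$ is indispensable.
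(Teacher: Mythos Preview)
Your proof is correct, and the overall architecture matches the paper's: Fej\'er--Riesz for part (i), then assemble the $\overline{\mathcal{P}}$-inner function from the polynomials $A,E,D$. The route differs in part (ii). The paper invokes \cite[Theorem 4.8]{ALY18} as a black box to conclude directly that $h=(E/D,D^{\sim n}/D)$ is a rational $\Gamma$-inner function of degree $n$ whose zeros of $s$ and royal nodes are as prescribed and whose royal polynomial is $R$; it then applies Theorem~\ref{prop446} to upgrade to a $\overline{\mathcal{P}}$-inner function. You instead verify the hypotheses of Theorem~\ref{converseofthm443} by hand---checking $E^{\sim n}=E$, nonvanishing of $D$ on $\mathbb{T}$ via the distinctness hypothesis, and $|E|\le 2|D|$ via the maximum principle---and then identify the royal polynomial by the explicit computation $4DD^{\sim n}-E^2=R$ on $\mathbb{T}$. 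Your approach is more self-contained (it does not lean on \cite[Theorem 4.8]{ALY18}) and makes transparent exactly where the hypothesis $\sigma_j\ne\eta_\ell$ enters; the paper's approach is shorter because it outsources those checks to the earlier reference.
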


\begin{proof}
\textup{(i)} By \cite[Lemma 4.4]{ALY18}, $R$ is $n$-balanced, and so $\lambda^{-n}R(\lambda) \geq 0$ for all $\lambda \in \Bbb{T}$. Therefore
$$\lambda^{-n}R(\lambda)+|E(\lambda)|^2 \geq 0 \text{ for all } \lambda \in \Bbb{T}.$$
By the Fej\'er-Riesz theorem, there exist outer polynomials $A$ and $D$ of degree at most $n$ such that
$$\lambda^{-n}R(\lambda) = 4|A(\lambda)|^2 \; \text{ for all } \lambda \in \Bbb{T}$$
and
$$\lambda^{-n}R(\lambda)+|E(\lambda)|^2 = 4|D(\lambda)|^2 \; \text{ for all } \lambda \in \Bbb{T}.$$
\textup{(ii)} By \cite[Theorem 4.8]{ALY18}, the function $h$ defined by
$$h=(s, p)=\Big(\frac{E}{D}, \frac{D^{\sim n}}{D}\Big)$$
is a rational $\Gamma$-inner function such that \textup{deg}$(h)=n$ and conditions \textup{(2)} and \textup{(3)} hold. The royal polynomial of $h$ is $R$. \\
By equations \eqref{RED} and \eqref{RA},
\begin{eqnarray*}
|A(\lambda)|^2 & = & |D(\lambda)|^2-\frac{1}{4}|E(\lambda)|^2.
\end{eqnarray*}
Therefore, by Proposition \ref{prop446},
$$x = \Big(a_{in}\frac{A}{D}, \frac{E}{D}, \frac{D^{\sim n}}{D}\Big)$$
is a rational $\overline{\mathcal{P}}$- inner function. 
By the definition \eqref{a_in} of $a_{in}$, the zeros of $a_{in}$ are $\beta_1, \dots, \beta_m$, while, since $A$ is an outer polynomial, $A$ has no zeros in $\Bbb{D}$. Hence the zeros of $a = a_{in}\dfrac{A}{D}$ in $\Bbb{D}$ are $\beta_1, \dots, \beta_m$, as required for (1).
\end{proof}

\begin{thm}\label{producepentainnerfunc}
Let $x=(a, s, p)$ be a rational $\overline{\mathcal{P}}$-inner function of degree $(m+n, n)$ such that
\begin{enumerate}
\item the zeros of $a$, repeated according to multiplicity, are $\beta_1, \beta_2, \dots, \beta_m \in \Bbb{D}$,
\item the zeros of $s$, repeated according to multiplicity, are $\alpha_1, \alpha_2, \dots, \alpha_{k_0} \in \Bbb{D}$ and \\ $\eta_1, \eta_2, \dots, \eta_{k_1} \in \Bbb{T}$, where $2k_0+k_1=n$,
\item the royal nodes of $(s, p)$ are $\sigma_1, \dots, \sigma_n \in \overline{\Bbb{D}}$.
\end{enumerate}
There exists some choice of $c \in \Bbb{T}, \; t_{+} \; \textgreater \; 0, \; t \in \Bbb{R} \setminus \{0\}$ and $\omega \in \Bbb{T}$ such that the recipe in Theorem \ref{constructpentainnfunc} with these choices produces the function $x$.
\end{thm}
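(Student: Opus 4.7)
The plan is to reverse-engineer the recipe of Theorem \ref{constructpentainnfunc} from the given rational $\overline{\mathcal{P}}$-inner function $x=(a,s,p)$. First I would apply Theorem \ref{descripration} to $x$: since $a$ has $m$ zeros in $\Bbb{D}$, the outer factor of $a$ is nonzero, and the theorem yields polynomials $A,E,D$ of degree at most $n$ satisfying conditions (1)--(8) of Theorem \ref{descripration}, together with an inner factor $a_{in}$ of $a$. In particular
\[
a=a_{in}\frac{A}{D},\qquad s=\frac{E}{D},\qquad p=\frac{D^{\sim n}}{D}.
\]

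Next I would handle the $\Gamma$-inner part $h=(s,p)$. Since $h$ is rational $\Gamma$-inner of degree $n$ with prescribed zeros of $s$ and prescribed royal nodes of $(s,p)$, I would invoke the $\Gamma$-inner analogue of the statement (this is \cite[Theorem 4.8]{ALY18}, whose converse is recorded in the paper cited in the proof of Theorem \ref{constructpentainnfunc}). That result furnishes constants $t_+>0$ and $t\in\Bbb{R}\setminus\{0\}$ such that
\[
R_h(\lambda)=t_+\prod_{j=1}^{n}(\lambda-\sigma_j)(1-\overline{\sigma_j}\lambda),\qquad
E(\lambda)=t\prod_{j=1}^{k_0}(\lambda-\alpha_j)(1-\overline{\alpha_j}\lambda)\prod_{j=1}^{k_1}i\mathrm{e}^{-i\theta_j/2}(\lambda-\eta_j),
\]
so the polynomials $R$ and $E$ produced by the recipe agree exactly with the royal polynomial $R_h$ and the numerator of $s$ coming from $x$. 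Here $t_+$ normalizes the leading behaviour of $R_h$, and $t$ normalizes the leading coefficient of $E$ (with the $i\mathrm{e}^{-i\theta_j/2}$ factors chosen so that $E^{\sim n}=E$).

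Now I would show that the outer polynomials $A$ and $D$ produced from the Fej\'er--Riesz factorizations in Theorem \ref{constructpentainnfunc} coincide with those obtained from Theorem \ref{descripration}. From condition (5) of Theorem \ref{descripration} we have $|A|^2=|D|^2-\tfrac14|E|^2$ on $\Bbb{T}$, and since $R_h=4DD^{\sim n}-E^2$, the identity $\lambda^{-n}R_h(\lambda)=4|D(\lambda)|^2-|E(\lambda)|^2$ holds for $\lambda\in\Bbb{T}$. Hence $4|D|^2=\lambda^{-n}R+|E|^2$ and $4|A|^2=\lambda^{-n}R$ on $\Bbb{T}$. Because outer polynomials with prescribed modulus on $\Bbb{T}$ are unique up to multiplication by a unimodular constant, the $A$ and $D$ produced by Theorem \ref{constructpentainnfunc} agree with those from Theorem \ref{descripration} after absorbing a unimodular factor; one such factor is $\omega\in\Bbb{T}$, which gets absorbed into $A$, and the remaining freedom in $D$ can be absorbed by the uniqueness clause of $p=D^{\sim n}/D$.

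Finally, the inner factor $a_{in}$ of $a$ is rational and inner on $\Bbb{D}$ with exactly the zeros $\beta_1,\dots,\beta_m$ of $a$ in $\Bbb{D}$ (since $A$, being outer, has no zeros in $\Bbb{D}$), so $a_{in}$ is a finite Blaschke product of degree $m$ and
\[
a_{in}(\lambda)=c\prod_{i=1}^m B_{\beta_i}(\lambda)
\]
for some $c\in\Bbb{T}$. Combining these choices of $c$, $t_+$, $t$, $\omega$ and running the recipe of Theorem \ref{constructpentainnfunc} then reproduces $x$ exactly. The main obstacle I anticipate is bookkeeping the unimodular and real constants consistently across the three factorizations (of $a$, of $\lambda^{-n}R$, and of $\lambda^{-n}R+|E|^2$), since the Fej\'er--Riesz outer factors are only unique up to unimodular multiples; this is resolved by the freedom to choose $\omega$ and $c$ independently, while $t_+$ and $t$ are fixed by the leading coefficients of $R_h$ and $E$.
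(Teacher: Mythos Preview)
Your proposal is correct and follows essentially the same route as the paper: reduce to the $\Gamma$-inner case via Lemma~\ref{pentagammainner} (equivalently, via Theorem~\ref{descripration}), invoke the converse result \cite[Proposition~4.9]{ALY18} to fix $t_+$ and $t$ from the royal polynomial and the zeros of $s$, match the Fej\'er--Riesz outer factors up to unimodular constants, and read off $c$ from the Blaschke product for $a_{in}$. One small correction on bookkeeping: in the paper the parameter $\omega\in\Bbb{T}$ records the unimodular ambiguity in the choice of the outer polynomial $D$ (so that $D_1=\omega D$), not in $A$; the ratio $p=D^{\sim n}/D$ does \emph{not} pin down $D$ up to a unimodular factor, so you cannot absorb the freedom in $D$ that way. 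The residual unimodular constant in $A$ is instead folded into the constant $c$ in front of the Blaschke product, exactly as you anticipated in your final paragraph.
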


\begin{proof}
By Lemma \ref{pentagammainner}, $h=(s, p)$ is a rational $\Gamma$-inner function of degree $n$. As in \cite[Proposition 4.9]{ALY18}, there exists some choice of $t_{+} \; \textgreater \; 0, \; t \in \Bbb{R} \setminus \{0\}$ and $\omega \in \Bbb{T}$ such that the recipe of \cite[Theorem 4.8]{ALY18} produces the function $h$. Let us give those steps. \\
By \cite[Proposition 2.2]{ALY18}, there exist polynomials $E_1$ and $D_1$ such that deg$(E_1)$, deg$(D_1) \leq n$, $E_1$ is $n$-symmetric, $D_1(\lambda) \neq 0$ on $\overline{\Bbb{D}}$, and
$$s = \dfrac{E_1}{D_1} \; \text{ and } \; \; p = \dfrac{D_{1}^{\sim n}}{D_1} \text{ on } \overline{\Bbb{D}}.$$
By hypothesis, the zeros of $s$, repeated according to multiplicity, are $\alpha_1, \alpha_2, \dots, \alpha_{k_0}$ and $\eta_1, \eta_2, \dots, \eta_{k_1}$, where $2k_0+k_1=n$. Since $E_1$ is $n$-symmetric, by \cite[Lemma 4.6]{ALY18}, there exists $t \in \Bbb{R} \setminus \{0\}$ such that
$$E_1(\lambda) = t\prod_{j=1}^{k_0} (\lambda-\alpha_j)(1-\overline{\alpha_j}\lambda) \prod_{j=1}^{k_1} i\textup{e}^{-i \theta_j/2}(\lambda-\eta_j),$$
where $\eta_j = e^{i \theta_j}$ for $j =1, \dots, k_1.$
The royal nodes of $h$ are assumed to be $\sigma_1, \dots, \sigma_n$. By \cite[Proposition 4.5]{ALY18}, for the royal polynomial $R_1$ of $h$, there exists $t_{+} \; \textgreater \; 0$ such that
$$R_1(\lambda) = t_{+} \prod_{j=1}^{n} Q_{\sigma_j}(\lambda),$$
where $Q_{\sigma_j}(\lambda)=  (\lambda-\sigma_j)(1-\overline{\sigma_j}\lambda)$, $j= 1, \dots,n$.
Since $E_1$ and $R_1$ coincide with $E$ and $R$ in the construction of Theorem \ref{constructpentainnfunc}, for a suitable choice of $t_{+} \; \textgreater \; 0$ and $t \in \Bbb{R} \setminus \{0\}$, $D_1$ is a permissible choice for $\omega D$ for some $\omega \in \Bbb{T}$, as a solution of the equation \eqref{RED}. \newline
By assumption the zeros of $a$, repeated according to multiplicity, are $\beta_1, \beta_2, \dots, \beta_m \in \Bbb{D}$. Then the inner part of $a$ will be equal to $a_{in}^{1} = c_1 \prod\limits_{i=1}^{m}B_{\beta_i}$ where $|c_1|=1$. For the outer part of $a$ there is an outer polynomial $A_1$ such that
\begin{eqnarray*}
|A_1(\lambda)|^2 & = & |D_1(\lambda)|^2-\frac{1}{4}|E_1(\lambda)|^2 \\
& = & |D(\lambda)|^2-\frac{1}{4}|E(\lambda)|^2 \\
& = & \lambda^{-n}R(\lambda),
\end{eqnarray*}
for $\lambda \in \Bbb{T}$. By equation \eqref{RA}, $A_1 = c_2 A$ up to a constant $c_2$ such that $|c_2|=1$. Also, $a_{in}^{1}$ coincides with $a_{in}$ for a suitable choice of $c \in \Bbb{T}$. Hence the construction of Theorem \ref{constructpentainnfunc} yields $x=(a, s, p)$ for the appropriate choices of $t_{+} \; \textgreater \; 0$, $t \in \Bbb{R} \setminus \{0\}$, $\omega$ and $c \in \Bbb{T}$.
\end{proof}

\section{A special case of Schwarz lemma for $\mathcal{P}$}\label{SecspecaseofSchwarzlemforP}

The classical Schwarz lemma gives a solvability criterion for a two-point interpolation problem in $\D$. In \cite{ALY2015}  a simple  analogue of Schwarz lemma the for two-point $\mu$-synthesis was given.
We consider a general linear subspace $E$ of $\C^{n\times m}$ and the corresponding $\mu_E$ on $\C^{m\times n}$, as in equation \eqref{defmu}.
We shall denote by $N$ the Nevanlinna class of functions on the disc \cite{rudin} and if $F$ is a matricial function on $\D$ then we write $F\in N$ to mean that each entry of $F$ belongs to $N$.  It then follows from Fatou's Theorem that if $F\in N$ is an $m\times n$-matrix-valued function then
\[
\lim_{r\to 1-} F(r\la) \mbox{ exists for  almost all }  \la \in\T.
\]
The following Schwarz lemma was proved in \cite[Proposition 10.3]{ALY2015}.
\begin{prop}\label{schwarz_L_B_mu}
Let $\la_0 \in \D \setminus \{0 \}$, let $W \in \C^{m\times n}$ and let $E$ be a subset of $\C^{n\times m}$. There exists $F \in  N\cap\hol(\D,\C^{m\times n})$
such that 
\begin{enumerate}
\item $F(0) =0$ and $F(\la_0)= W$,
\item $\mu_E(F(\la)) < 1 $ for all $\la \in \D$
\end{enumerate}
if and only if $\mu_E(W) \le |\la_0|$. 
\end{prop}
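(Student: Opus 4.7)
My plan is to prove sufficiency by exhibiting an explicit linear interpolant, and necessity by a Schwarz-lemma argument lifted to the level of the spectral radius.

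For sufficiency, assume $\mu_E(W)\le |\lambda_0|$ and take $F(\lambda) = (\lambda/\lambda_0)W$. This is polynomial, hence in $N\cap \hol(\D,\C^{m\times n})$, satisfies $F(0)=0$ and $F(\lambda_0)=W$, and since $\det(I-\zeta AX) = \det(I-A(\zeta X))$ combined with $\|\zeta X\|=|\zeta|\|X\|$ forces the homogeneity $\mu_E(\zeta A) = |\zeta|\mu_E(A)$, one obtains $\mu_E(F(\lambda)) = (|\lambda|/|\lambda_0|)\,\mu_E(W)\le |\lambda| < 1$ for every $\lambda\in\D$.

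For necessity, the key preparatory fact is the variational formula $\mu_E(A) = \sup\{\rho(AX):X\in E,\;\|X\|\le 1\}$, where $\rho$ denotes the spectral radius; it follows from the definition of $\mu_E$ by noting that $\det(I-AX)=0$ is equivalent to $1\in\sigma(AX)$ and rescaling $X$ by the relevant eigenvalue. Given $F$ satisfying (1) and (2), since $F(0)=0$ the matrix-valued quotient $G(\lambda):=F(\lambda)/\lambda$ extends holomorphically to all of $\D$. Fix $X\in E$ with $\|X\|\le 1$ and set $v_X(\lambda):=\rho(G(\lambda)X)$. By Vesentini's classical theorem, $\log v_X$ is subharmonic on $\D$. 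The identity $\rho(F(\lambda)X) = |\lambda|v_X(\lambda)$ together with $\rho(F(\lambda)X)\le \mu_E(F(\lambda))<1$ (the variational bound, since $\|X\|\le 1$) gives $v_X(\lambda)<1/|\lambda|$, so on each circle $|\lambda|=r$ with $r\in(0,1)$ one has $\log v_X\le -\log r$. The maximum principle for subharmonic functions propagates this bound to $|\lambda|\le r$, and letting $r\to 1^-$ yields $v_X\le 1$ on $\D$, i.e.\ $\rho(F(\lambda)X)\le |\lambda|$. Taking the supremum over admissible $X$ and invoking the variational formula produces $\mu_E(F(\lambda))\le |\lambda|$; evaluating at $\lambda=\lambda_0$ gives the desired bound $\mu_E(W)\le |\lambda_0|$.

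The principal analytic obstacle is Vesentini's subharmonicity of $\log\rho$ for holomorphic matrix functions, which is the nontrivial ingredient that drives the argument; once granted, the variational identity is elementary and the maximum-principle step is routine. Equivalently, one could observe that $U=\{A\in\C^{m\times n}:\mu_E(A)<1\}$ is a balanced pseudoconvex domain whose Minkowski functional is $\mu_E$ and invoke a general Schwarz lemma for balanced pseudoconvex domains, but that packaging rests on precisely the same subharmonicity input. Note also that the hypothesis $F\in N$ plays no role in the necessity argument: only the holomorphicity of $F$ and condition (2) are used.
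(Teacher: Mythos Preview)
The paper does not actually prove this proposition; it merely quotes it from \cite[Proposition 10.3]{ALY2015}, so there is no in-paper argument to compare against. Your proof is correct and follows a standard route: the explicit linear interpolant for sufficiency, and Vesentini's subharmonicity of $\log\rho$ plus the variational identity $\mu_E(A)=\sup\{\rho(AX):X\in E,\ \|X\|\le 1\}$ for necessity.

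One caveat worth flagging: both your homogeneity step $\mu_E(\zeta A)=|\zeta|\,\mu_E(A)$ and your variational formula tacitly require that $E$ be closed under scalar multiplication (so that $X\in E$ implies $\zeta X\in E$). The proposition as stated in this paper says ``let $E$ be a subset'', whereas the paper's own definition of $\mu_E$ (equation~\eqref{defmu}) specifies that $E$ is a linear subspace. Your argument is valid under the subspace hypothesis, which is the standard one and is what the cited source uses; for a completely arbitrary subset $E$ the statement can fail. It would strengthen your write-up to make this assumption explicit. Your closing observation that the Nevanlinna-class hypothesis plays no role in the necessity direction is also correct.
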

In this section we consider a simple case of a Schwarz lemma for the pentablock.
We will need the following elementary technical lemma.

\begin{lem}\label{lem171021}
Let $A = $
$\begin{bmatrix} \lambda_1 & 0 \\
a & \lambda_2
\end{bmatrix}$,
where $\lambda_1, \; \lambda_2, \; a \in \Bbb{C}$. Then the following conditions are equivalent:\\
{\rm (i)} $\lambda_1, \lambda_2 \in \overline{\Bbb{D}}, \; |a| \leq (1 - |\lambda_1|^2)^{\frac{1}{2}}(1 - |\lambda_2|^2)^{\frac{1}{2}}$,\\
{\rm (ii)} $\|A\| \leq 1$,\\
{\rm (iii)} $1-A^*A \geq 0$.
\end{lem}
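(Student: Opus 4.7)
The plan is to prove the chain (ii)$\Leftrightarrow$(iii)$\Leftrightarrow$(i). The equivalence (ii)$\Leftrightarrow$(iii) is a purely operator-theoretic triviality: by definition of the operator norm, $\|A\|\le 1$ iff $\langle A^*A\xi,\xi\rangle = \|A\xi\|^2 \le \|\xi\|^2$ for all $\xi\in\C^2$, i.e.\ iff $1-A^*A\ge 0$. I would dispose of this in one line.

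The substantive work is (iii)$\Leftrightarrow$(i). The plan is to compute $A^*A$ explicitly and then apply the standard criterion that a $2\times 2$ Hermitian matrix is positive semidefinite iff its diagonal entries and its determinant are non-negative. A direct computation gives
\[
1-A^*A \;=\; \begin{bmatrix} 1-|\lambda_1|^2-|a|^2 & -\overline{a}\lambda_2 \\ -a\overline{\lambda_2} & 1-|\lambda_2|^2 \end{bmatrix}.
\]
Thus $1-A^*A\ge 0$ is equivalent to the three scalar conditions
\[
1-|\lambda_2|^2 \ge 0, \qquad 1-|\lambda_1|^2-|a|^2 \ge 0,
\]
\[
\bigl(1-|\lambda_1|^2-|a|^2\bigr)\bigl(1-|\lambda_2|^2\bigr) - |a|^2|\lambda_2|^2 \ge 0.
\]
Expanding the determinant condition, the cross term $-|a|^2(1-|\lambda_2|^2)-|a|^2|\lambda_2|^2 = -|a|^2$ collapses nicely, yielding the single inequality
\[
|a|^2 \le (1-|\lambda_1|^2)(1-|\lambda_2|^2).
\]

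To finish I would observe that this last inequality together with $|\lambda_2|\le 1$ automatically forces $|\lambda_1|\le 1$ (otherwise the right-hand side would be non-positive while $|a|^2\ge 0$, forcing $a=0$ and $|\lambda_1|\le 1$ anyway via a separate check, or more cleanly by noting the right-hand side must be non-negative), and it also implies the trace condition $1-|\lambda_1|^2-|a|^2 \ge 0$ since $(1-|\lambda_1|^2)(1-|\lambda_2|^2)\le 1-|\lambda_1|^2$. Hence the three scalar conditions collapse to precisely the conditions in (i). I do not anticipate any genuine obstacle; the only point requiring a moment's care is this bookkeeping that shows the two a priori separate diagonal inequalities reduce to the single pair $|\lambda_1|,|\lambda_2|\le 1$ once the determinant bound on $|a|$ is in force.
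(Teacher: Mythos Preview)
Your proof is correct. The paper itself does not supply a proof of this lemma --- it is stated only as an ``elementary technical lemma'' --- so your argument fills in exactly what the paper omits, and by the natural route: compute $1-A^*A$ explicitly and apply the $2\times 2$ positive-semidefiniteness criterion.

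One small remark on the final bookkeeping. In the direction (iii)$\Rightarrow$(i) you try to extract $|\lambda_1|\le 1$ from the determinant inequality together with $|\lambda_2|\le 1$; as you sense, this is awkward in the edge case $|\lambda_2|=1$, where $(1-|\lambda_1|^2)(1-|\lambda_2|^2)=0$ regardless of $\lambda_1$. But there is no need for this detour: the diagonal condition $1-|\lambda_1|^2-|a|^2\ge 0$ already gives $|\lambda_1|\le 1$ directly. With that simplification the reduction to (i) is immediate.
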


\begin{defn}\label{H^infty(D,C^2*2)}
$H^{\infty}(\Bbb{D}, \Bbb{C}^{2\times 2})$ denotes the space of bounded analytic $2\times 2$ matrix-valued functions on $\Bbb{D}$ with the supremum norm:
$$\|f\|_{H^\infty} = \sup\limits_{z \in \Bbb{D}}\|f(z)\|_{\Bbb{C}^{2\times 2}}.$$
\end{defn}

\begin{defn}\label{L^infty(T,C^2*2)}
$L^{\infty}(\Bbb{T}, \Bbb{C}^{2\times 2})$ denotes the space of essentially bounded Lebesgue-measurable $2\times 2$ matrix-valued functions on $\Bbb{T}$ with the essential supremum norm:
$$\|f\|_{L^\infty} = \textup{ess}\sup\limits_{|z|=1}\|f(z)\|_{\Bbb{C}^{2\times 2}}.$$
\end{defn}

\begin{lem}\label{lem231121}
If $g \in H^{\infty}(\Bbb{D}, \Bbb{C}^{2\times 2})$ and $\lambda_0 \in \Bbb{D}$ then $\|g(\lambda_0)\|_{\Bbb{C}^{2\times 2}} \leq \|g\|_{L^\infty}$.
\end{lem}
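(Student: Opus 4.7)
The plan is to reduce the matrix-valued statement to a scalar maximum-modulus argument by pairing $g(\la)$ against unit vectors. Fix unit vectors $u,v\in\C^2$ and define the scalar function
\[
f(\la) = \langle g(\la)u, v\rangle \quad (\la \in \D).
\]
Since each entry of $g$ is bounded and analytic on $\D$, $f$ lies in $H^\infty(\D)$, and by Cauchy--Schwarz $|f(\la)|\le \|g(\la)\|_{\C^{2\times2}}$ for every $\la\in\D$.

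Next I would invoke the classical scalar $H^\infty$ boundary theory. By Fatou's theorem, $f$ has non-tangential boundary values $\tilde f(\la)$ for almost all $\la\in\T$, and
\[
|\tilde f(\la)| \le \|g(\la)\|_{\C^{2\times 2}} \le \|g\|_{L^\infty}
\]
for almost every $\la\in\T$, once the boundary values of $g$ are identified via Fatou applied entrywise. Representing $f$ inside the disc by its Poisson integral of its boundary values (standard for bounded analytic functions), one obtains
\[
|f(\la_0)| \le \frac{1}{2\pi}\int_0^{2\pi} P_{\la_0}(\e^{\ii\theta}) |\tilde f(\e^{\ii\theta})| \, \dd\theta \le \|g\|_{L^\infty},
\]
since the Poisson kernel $P_{\la_0}$ is a probability density.

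Finally, since the operator norm on $\C^{2\times 2}$ is given by
\[
\|g(\la_0)\|_{\C^{2\times 2}} = \sup_{\|u\|=\|v\|=1} |\langle g(\la_0)u,v\rangle|,
\]
taking the supremum over unit vectors $u,v$ in the bound $|f(\la_0)|\le \|g\|_{L^\infty}$ yields $\|g(\la_0)\|_{\C^{2\times 2}}\le \|g\|_{L^\infty}$, completing the proof. There is no real obstacle here; the only mild point of care is citing Fatou's theorem and the Poisson integral representation for bounded scalar analytic functions, which are classical.
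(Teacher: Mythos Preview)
Your proof is correct and follows essentially the same approach as the paper: reduce to scalar functions by pairing $g(\la)$ with unit vectors, bound the resulting $H^\infty$ function by its boundary values, then take the supremum over unit vectors. The only cosmetic difference is that the paper invokes the maximum principle for bounded analytic functions directly to obtain $|f(\la_0)|\le \ess\sup_{z\in\T}|f(z)|$, whereas you unpack this step via Fatou's theorem and the Poisson integral representation; these are equivalent routes to the same inequality.
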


\begin{proof}
Consider any unit vectors $x, y \in \Bbb{C}^{2}$ and the scalar function
\begin{eqnarray*}
f & : & \Bbb{D} \rightarrow \Bbb{C} \\
& : & \lambda \longmapsto \langle g(\lambda)x, y \rangle_{\Bbb{C}^2}.
\end{eqnarray*}
Note that, for every $\lambda \in \Bbb{D}$, since $\|x\|_{\Bbb{C}^2} = \|y\|_{\Bbb{C}^2} = 1$
\begin{eqnarray*}
|f(\lambda)| = |\left\langle g(\lambda)x, y \right\rangle_{\Bbb{C}^2}| & \leq & \|g(\lambda)x\|_{\Bbb{C}^2} \; \|y\|_{\Bbb{C}^2} \; \; \; \; \; \text{(Cauchy-Schwarz inequality)} \\
& \leq & \|g(\lambda)\|_{\Bbb{C}^{2\times 2}} \; \|x\|_{\Bbb{C}^2} \; \|y\|_{\Bbb{C}^2} \\
& \leq & \|g\|_{H^\infty}.
\end{eqnarray*}
Thus $f$ is bounded on $\Bbb{D}$. Since $g$ is analytic on $\Bbb{D}$, it is easy to show that 
 $f$ is analytic on $\Bbb{D}$ and, for every $z_0 \in \Bbb{D}, \; f'(z_0) = \langle g'(z_0)x, y \rangle_{\Bbb{C}^2}$.
By the maximum principle for scalar analytic functions,
for every $\lambda_0 \in \Bbb{D}, \; |f(\lambda_0)| \leq \textup{ess} \sup\limits_{z \in \Bbb{T}} |f(z)|$, and so
\begin{eqnarray*}
\left|\left\langle g(\lambda_0)x, y \right\rangle_{\Bbb{C}^2}\right| & \leq & \textup{ess} \sup\limits_{z \in \Bbb{T}} \left|\left\langle g(z)x, y \right\rangle\right| \\
& \leq & \textup{ess} \sup\limits_{z \in \Bbb{T}} \|g(z)\|_{\Bbb{C}^{2\times 2}} = \|g\|_{L^{\infty}}.
\end{eqnarray*}
Take the supremum of both sides in this inequality over unit vectors $x, y$ to get
$$\|g(\lambda_0)\|_{\Bbb{C}^{2\times 2}} \leq \|g\|_{L^{\infty}}.$$
\end{proof}

The following statement is known and  follows easily from Lemma \ref{lem231121}.

\begin{cor}\label{cor231121}
If $F \in H^{\infty}(\Bbb{D}, \Bbb{C}^{2\times 2})$ and $F(0) = 0$ then, for any $\lambda_0 \in \Bbb{D}$,
$$\|F(\lambda_0)\|_{\Bbb{C}^{2\times 2}} \leq |\lambda_0|\;\|F\|_{H^{\infty}}.$$
\end{cor}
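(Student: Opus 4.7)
The plan is to reduce this to a matrix-valued version of the classical Schwarz lemma by factoring out the zero of $F$ at the origin and then invoking Lemma \ref{lem231121} as the substitute for the scalar maximum modulus principle.

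First, I would define $G : \Bbb{D} \to \Bbb{C}^{2\times 2}$ by
\[
G(\lambda) = \frac{F(\lambda)}{\lambda} \quad \text{for } \lambda \in \Bbb{D}\setminus\{0\}, \qquad G(0)=F'(0).
\]
Since $F$ is analytic on $\Bbb{D}$ with $F(0)=0$, the entrywise Taylor expansion of $F$ at the origin has no constant term, so dividing by $\lambda$ still yields an entrywise analytic function; thus $G \in \hol(\Bbb{D},\Bbb{C}^{2\times 2})$ and $F(\lambda) = \lambda G(\lambda)$ on all of $\Bbb{D}$.

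The key step is to show $\|G\|_{H^\infty} \le \|F\|_{H^\infty}$. I would do this by a standard rescaling argument. For each $r \in (0,1)$, set $G_r(\lambda) = G(r\lambda)$ for $\lambda \in \Bbb{D}$. Then $G_r$ is analytic on $\Bbb{D}$ and continuous on $\overline{\Bbb{D}}$, hence belongs to $H^\infty(\Bbb{D},\Bbb{C}^{2\times 2})$, with boundary values satisfying
\[
\|G_r(\zeta)\|_{\Bbb{C}^{2\times 2}} \;=\; \frac{\|F(r\zeta)\|_{\Bbb{C}^{2\times 2}}}{r} \;\le\; \frac{\|F\|_{H^\infty}}{r} \qquad \text{for all } \zeta \in \Bbb{T}.
\]
Lemma \ref{lem231121} applied to $G_r$ at the point $\lambda_0/r$ (valid as soon as $|\lambda_0|<r$) therefore gives
\[
\|G(\lambda_0)\|_{\Bbb{C}^{2\times 2}} \;=\; \|G_r(\lambda_0/r)\|_{\Bbb{C}^{2\times 2}} \;\le\; \|G_r\|_{L^\infty} \;\le\; \frac{\|F\|_{H^\infty}}{r}.
\]
Letting $r \to 1^-$ yields $\|G(\lambda_0)\|_{\Bbb{C}^{2\times 2}} \le \|F\|_{H^\infty}$.

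Multiplying this last bound by $|\lambda_0|$ and using the identity $F(\lambda_0) = \lambda_0\, G(\lambda_0)$ gives
\[
\|F(\lambda_0)\|_{\Bbb{C}^{2\times 2}} \;=\; |\lambda_0|\,\|G(\lambda_0)\|_{\Bbb{C}^{2\times 2}} \;\le\; |\lambda_0|\,\|F\|_{H^\infty},
\]
which is exactly the claim. There is no serious obstacle here: the only subtlety worth noting is the analyticity of $G$ at the origin, which is transparent from the vanishing of $F(0)$; Lemma \ref{lem231121} supplies the maximum-modulus ingredient that makes the scalar Schwarz argument go through in the matrix-valued setting.
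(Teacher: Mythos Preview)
Your proof is correct and matches the paper's approach: the paper simply states that the corollary ``follows easily from Lemma~\ref{lem231121}'' without spelling out the details, and your argument---factoring $F(\lambda)=\lambda G(\lambda)$ and applying Lemma~\ref{lem231121} to $G$ via the rescaling $G_r$---is precisely the natural way to carry this out. The rescaling step is a slight elaboration (one could alternatively observe directly that $G\in H^\infty(\Bbb{D},\Bbb{C}^{2\times 2})$ with $\|G\|_{L^\infty}=\|F\|_{L^\infty}$ and apply Lemma~\ref{lem231121} once), but the substance is the same.
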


We next describe a special case of a Schwarz lemma for the pentablock.

\begin{thm}\label{prop171021}
Let $\lambda_0 \in \Bbb{D} \setminus \{0\}$, and $(a_0, s_0, p_0) \in \overline{\mathcal{P}}$, where $s_0 = \lambda_1+\lambda_2, \; p_0 = \lambda_1\lambda_2$, for some $\lambda_1, \lambda_2 \in \Bbb{D}$. Then the following conditions are equivalent:\\
{\rm (i)} 
 $|\lambda_1| \leq |\lambda_0|, \; |\lambda_2| \leq |\lambda_0|$, and
\begin{equation}\label{eq525}
|a_0| \leq |\lambda_0|\left(1-\left|\dfrac{\lambda_1}{\lambda_0}\right|^2\right)^{\frac{1}{2}}\left(1-\left|\dfrac{\lambda_2}{\lambda_0}\right|^2\right)^{\frac{1}{2}}.\\
\end{equation}
{\rm (ii)}  There exists an analytic map $F : \Bbb{D} \rightarrow \overline{\Bbb{B}^{2\times 2}}$ such that
$$F(0) = 0 \text{ and } F(\lambda_0) = \begin{bmatrix} \lambda_1 & 0 \\ a_0 & \lambda _2 \end{bmatrix}.$$
Furthermore, if {\rm (i)} holds and $x = \pi \circ F$, then $x$ is an analytic map from $\Bbb{D}$ to $\overline{\mathcal{P}}$ such that $x(0) = (0,0,0)$ and $x(\lambda_0) = (a_0, s_0, p_0)$.
\end{thm}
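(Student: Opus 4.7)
The plan is to reduce everything to two ingredients already in the excerpt: the matrix characterization in Lemma \ref{lem171021} of when a lower-triangular $2\times 2$ matrix has operator norm $\leq 1$, and the vector-valued Schwarz estimate in Corollary \ref{cor231121}. Write $M = \begin{bmatrix} \lambda_1 & 0 \\ a_0 & \lambda_2 \end{bmatrix}$ for convenience. The implication (ii)~$\Rightarrow$~(i) will come from applying Corollary~\ref{cor231121} to $F$ and then reading off (i) from Lemma \ref{lem171021} applied to the rescaled matrix $M/|\lambda_0|$. The implication (i)~$\Rightarrow$~(ii) will come from exhibiting a linear $F$, verifying the norm bound again via Lemma \ref{lem171021}.

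For (ii)~$\Rightarrow$~(i), suppose $F\colon\Bbb{D}\to\overline{\Bbb{B}^{2\times 2}}$ is analytic with $F(0)=0$ and $F(\lambda_0)=M$. Then $F\in H^\infty(\Bbb{D},\Bbb{C}^{2\times 2})$ with $\|F\|_{H^\infty}\leq 1$, and Corollary \ref{cor231121} gives $\|M\|=\|F(\lambda_0)\|\leq |\lambda_0|$. Dividing by the positive real scalar $|\lambda_0|$ preserves the lower-triangular structure, so
\[
\left\|\frac{1}{|\lambda_0|}\begin{bmatrix} \lambda_1 & 0 \\ a_0 & \lambda_2 \end{bmatrix}\right\|\leq 1.
\]
Applying Lemma \ref{lem171021}, (ii)~$\Rightarrow$~(i), to this matrix (with entries $\lambda_1/|\lambda_0|$, $\lambda_2/|\lambda_0|$, $a_0/|\lambda_0|$) yields $|\lambda_1|\leq|\lambda_0|$, $|\lambda_2|\leq|\lambda_0|$, and
\[
\frac{|a_0|}{|\lambda_0|}\leq\left(1-\left|\tfrac{\lambda_1}{\lambda_0}\right|^2\right)^{1/2}\left(1-\left|\tfrac{\lambda_2}{\lambda_0}\right|^2\right)^{1/2},
\]
which is condition (i).

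For (i)~$\Rightarrow$~(ii), Lemma \ref{lem171021}, (i)~$\Rightarrow$~(ii), applied to $M/|\lambda_0|$ gives $\|M\|\leq|\lambda_0|$. Define the linear map
\[
F(\lambda)=\frac{\lambda}{\lambda_0}\,M,\qquad \lambda\in\Bbb{D}.
\]
Then $F$ is analytic, $F(0)=0$, $F(\lambda_0)=M$, and
\[
\|F(\lambda)\|=\frac{|\lambda|}{|\lambda_0|}\|M\|\leq|\lambda|<1,
\]
so in fact $F(\Bbb{D})\subset\Bbb{B}^{2\times 2}\subset\overline{\Bbb{B}^{2\times 2}}$, proving (ii).

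For the concluding clause, take $x=\pi\circ F$ with the $F$ just constructed. Since $F(\lambda)\in\overline{\Bbb{B}^{2\times 2}}$ for every $\lambda\in\Bbb{D}$, the equivalence (1)~$\Leftrightarrow$~(5) of Theorem \ref{pentathm5.3} shows $x(\lambda)=\pi(F(\lambda))\in\overline{\mathcal{P}}$; analyticity of $x$ is immediate from analyticity of $F$ and polynomiality of $\pi$. Finally $x(0)=\pi(0)=(0,0,0)$, while $x(\lambda_0)=\pi(M)=(a_0,\lambda_1+\lambda_2,\lambda_1\lambda_2)=(a_0,s_0,p_0)$. No serious obstacle arises; the only point worth noting is that multiplication by the positive real scalar $1/|\lambda_0|$ preserves both the lower-triangular form and the diagonal entries up to modulus, which is what allows Lemma \ref{lem171021} to be used symmetrically on both directions of the equivalence.
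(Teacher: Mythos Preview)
Your proof is correct and follows essentially the same route as the paper's: both directions rest on Lemma~\ref{lem171021} applied to the rescaled matrix $M/\lambda_0$ (you use the real scalar $|\lambda_0|$, the paper uses the complex $\lambda_0$, but since only moduli enter this is immaterial), combined with Corollary~\ref{cor231121} for (ii)$\Rightarrow$(i) and the explicit linear $F(\lambda)=(\lambda/\lambda_0)M$ for (i)$\Rightarrow$(ii). The concluding clause via Theorem~\ref{pentathm5.3} is also handled identically.
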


\begin{proof}
(i) $\Rightarrow$ (ii) By assumption, $|\lambda_1| \leq |\lambda_0|, \; |\lambda_2| \leq |\lambda_0|$ and
$$|a_0| \leq |\lambda_0|\left(1-\left|\dfrac{\lambda_1}{\lambda_0}\right|^2\right)^{\frac{1}{2}}\left(1-\left|\dfrac{\lambda_2}{\lambda_0}\right|^2\right)^{\frac{1}{2}}.$$
Define
\begin{equation}\label{F(lambda)**}
F(\lambda) = \frac{\lambda}{\lambda_0}\begin{bmatrix} \lambda_1 & 0 \\ a_0 & \lambda_2 \end{bmatrix} = \lambda \begin{bmatrix} \lambda_1 / \lambda_0 & 0 \\ a_0 / \lambda_0 & \lambda_2 / \lambda_0 \end{bmatrix}.
\end{equation}
By Lemma \ref{lem171021}, 
$$\left\Vert\begin{bmatrix} \dfrac{\lambda_1}{\lambda_0} & 0 \\ \dfrac{a_0}{\lambda_0} & \dfrac{\lambda_2}{\lambda_0} \end{bmatrix}\right\Vert_{\Bbb{C}^{2\times 2}} \leq 1.$$
Hence
$\|F(\lambda)\| \leq |\lambda| $ for all $\lambda \in \Bbb{D}$, and
so $\|F\|_{\infty} \le 1$.  From the definition \eqref{F(lambda)**} of $F$, we have
$$F(0) = \begin{bmatrix} 0 & 0 \\ 0 & 0 \end{bmatrix} \text{ and } F(\lambda_0) = \begin{bmatrix} \lambda_1 & 0 \\ a_0 & \lambda_2 \end{bmatrix}.$$
(ii) $\Rightarrow$ (i) Suppose (ii) is satisfied. By Corollary \ref{cor231121},
$$\left\Vert\begin{bmatrix} \lambda_1 & 0 \\ a_0 & \lambda_2 \end{bmatrix}\right\Vert_{\Bbb{C}^{2\times 2}} = \|F(\lambda_0)\|_{\Bbb{C}^{2\times 2}} \leq |\lambda_0| \; \|F\|_{H^\infty}.$$
By assumption $\|F\|_{H^\infty} \leq 1$, and so
$\left\Vert\begin{bmatrix} \lambda_1 & 0 \\ a_0 & \lambda_2 \end{bmatrix}\right\Vert_{\Bbb{C}^{2\times 2}} \leq |\lambda_0|$, hence,
$\left\Vert\begin{bmatrix} \dfrac{\lambda_1}{\lambda_0} & 0 \\ \dfrac{a_0}{\lambda_0} & \dfrac{\lambda_2}{\lambda_0} \end{bmatrix}\right\Vert_{\Bbb{C}^{2\times 2}} \leq 1. \; $ 
By Lemma \ref{lem171021},
$$\left\Vert\begin{bmatrix} \dfrac{\lambda_1}{\lambda_0} & 0 \\ \dfrac{a_0}{\lambda_0} & \dfrac{\lambda_2}{\lambda_0} \end{bmatrix}\right\Vert_{\Bbb{C}^{2\times 2}} \leq 1 \text{ if and only if }$$
$$|a_0| \leq |\lambda_0|\left(1-\left|\dfrac{\lambda_1}{\lambda_0}\right|^2\right)^{\frac{1}{2}}\left(1-\left|\dfrac{\lambda_2}{\lambda_0}\right|^2\right)^{\frac{1}{2}}, \; \; |\lambda_1| \leq |\lambda_0| \text{ and } |\lambda_2| \leq |\lambda_0|.$$

Let us consider $x = \pi \circ F$ on $\Bbb{D}$.
By assumption $(a_0, s_0, p_0) \in \overline{\mathcal{P}}$. By Theorem \ref{pentathm5.3} (6), since $\|F(\lambda)\| \leq 1$ for each $\lambda \in \Bbb{D}$, $x(\lambda) = \pi(F(\lambda)) = \dfrac{\lambda}{\lambda_0}(a_0, s_0, p_0)$ maps $\Bbb{D}$ to $\overline{\mathcal{P}}$. Therefore $x : \Bbb{D} \rightarrow \overline{\mathcal{P}}$ is analytic on $\Bbb{D}$ and maps 0 to $(0, 0, 0)$ and $\lambda_0$ to $(a_0, s_0, p_0)$.
\end{proof}

\section{A Schwarz Lemma for the symmetrized bidisc $\Gamma$}\label{SecSchwarzlemforGamma}

In \cite{AY2001} Agler and Young proved the following theorems.

\begin{thm}\label{SchwarzlemforGammathm1.1}
\textup{\cite[Theorem 1.1]{AY2001}} Let $\lambda_0 \in \Bbb{D}$ and $(s_0, p_0) \in \Gamma$. The following conditions are equivalent:
\begin{enumerate}
\item There exists an analytic function $\varphi : \Bbb{D} \rightarrow \Gamma$ such that $\varphi(0) = (0, 0)$ and $\varphi(\lambda_0) = (s_0, p_0)$;
\item $|s_0| \; \textless \; 2$ and
$$\dfrac{2|s_0 - p_0\overline{s_0}| + |s_{0}^{2} - 4p_0|}{4 - |s_0|^2} \leq |\lambda_0|;$$
\item 
\begin{equation}\label{lam0s0p0}
\big| |\lambda_0|^2s_0 - p_0\overline{s_0}\big| + |p_0|^2 + (1 - |\lambda_0|^2) \dfrac{|s_0|^2}{4} - |\lambda_0|^2 \leq 0;
\end{equation}
\item $$|s_0| \leq \frac{2}{1 - |\lambda_0|^2}(|\lambda_0| |1 - p_0\overline{\omega}^2| - \big| |\lambda_0|^2 - p_0 \overline{\omega}^2\big|),$$
\noindent where $\omega$ is a complex number of unit modulus such that $s_0 = |s_0|\omega$.
\end{enumerate}
Moreover, for any analytic function $\varphi = (\varphi_1, \varphi_2) : \Bbb{D} \rightarrow \Gamma$ such that $\varphi(0) = (0, 0)$,
$$\frac{1}{2}|\varphi_{1}^{'}(0)| + |\varphi_{2}^{'}(0)| \leq 1.$$
\end{thm}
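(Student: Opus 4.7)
My plan rests on the family of rational functions
\[
\Phi_\omega(s,p) = \frac{2\omega p - s}{2 - \omega s}, \qquad \omega \in \Bbb{T},
\]
which serve as the natural test functions for $\Gamma$. From Proposition~\ref{symm-bidisc}(2) one checks that $(s,p) \in \Gamma$ precisely when $|\Phi_\omega(s,p)| \leq 1$ for every $\omega \in \Bbb{T}$ (with $\omega s \neq 2$), and strict inequality characterises $\Bbb{G}$. Hence each $\Phi_\omega$ is analytic from $\Bbb{G}$ into $\Bbb{D}$, sending the origin to $0$, and the quantity $\sup_{\omega} |\Phi_\omega(s,p)|$ behaves like a ``norm'' on $\Gamma$ well suited to Schwarz-type arguments.

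For $(1) \Rightarrow (2)$, given an interpolating $\varphi : \Bbb{D} \to \Gamma$ with $\varphi(0)=(0,0)$ and $\varphi(\lambda_0)=(s_0,p_0)$, I would form $g_\omega := \Phi_\omega \circ \varphi$, which is analytic from $\Bbb{D}$ to $\overline{\Bbb{D}}$ and vanishes at $0$. The classical Schwarz lemma gives $|\Phi_\omega(s_0,p_0)| = |g_\omega(\lambda_0)| \leq |\lambda_0|$. Taking the supremum over $\omega \in \Bbb{T}$ produces $\sup_\omega |\Phi_\omega(s_0,p_0)| \leq |\lambda_0| < 1$, whence $(s_0,p_0) \in \Bbb{G}$ and in particular $|s_0| < 2$. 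The key algebraic step is then the identity
\[
\sup_{\omega \in \Bbb{T}} \left| \frac{2\omega p_0 - s_0}{2 - \omega s_0} \right| \;=\; \frac{2|s_0 - p_0 \overline{s}_0| + |s_0^2 - 4p_0|}{4 - |s_0|^2},
\]
which I would derive by writing $\omega = e^{i\theta}$, expressing $|\Phi_\omega|^2$ as a rational trigonometric function in $\theta$, and locating the critical point; the right-hand side then emerges from combining the constant and oscillatory terms. The equivalences $(2) \Leftrightarrow (3) \Leftrightarrow (4)$ are purely algebraic reformulations: $(3)$ is obtained from $(2)$ by clearing the denominator $4 - |s_0|^2$, isolating the single surd $|s_0 - p_0 \overline{s}_0|$, and squaring; $(4)$ is the value of $|\Phi_\omega(s_0,p_0)|$ at the maximising $\omega$, which occurs when $\omega$ aligns the arguments of $2\omega p_0$ and $s_0$ appropriately, so that $\omega^2 = \overline{s_0}^2/|s_0|^2$.

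The main obstacle is the sufficiency direction, realising $(1)$ from $(2)$ or $(3)$. My strategy is to fix $\omega_0 \in \Bbb{T}$ attaining the supremum, set $\eta_0 := \Phi_{\omega_0}(s_0,p_0)$ so that $|\eta_0| \leq |\lambda_0|$, and then construct $\varphi : \Bbb{D} \to \Gamma$ for which $\Phi_{\omega_0} \circ \varphi$ is the linear Schwarz function $\lambda \mapsto \eta_0 \lambda/\lambda_0$. Concretely, the fibres $\Phi_{\omega_0}^{-1}(t) \cap \Gamma$, $t \in \overline{\Bbb{D}}$, form a one-parameter family of affine discs in $\Bbb{C}^2$, and I would seek a single-valued analytic section $\Bbb{D} \to \Gamma$ along this family that passes through $(0,0)$ at $\lambda=0$ and through $(s_0,p_0)$ at $\lambda=\lambda_0$. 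Checking that the resulting map does stay in $\Gamma$ should reduce, after some manipulation, to precisely inequality~\eqref{lam0s0p0}, thereby closing the loop. This matching of the \emph{necessary} bound with the \emph{constructed} geodesic is the crucial point.

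Finally, for the derivative inequality, I would note the first-order expansion $\Phi_\omega(s,p) = \omega p - \tfrac12 s + O(\|(s,p)\|^2)$ near the origin. Thus for any analytic $\varphi = (\varphi_1,\varphi_2) : \Bbb{D} \to \Gamma$ with $\varphi(0)=(0,0)$,
\[
g_\omega'(0) \;=\; \omega\, \varphi_2'(0) - \tfrac12\, \varphi_1'(0),
\]
and the Schwarz bound $|g_\omega'(0)| \leq 1$ together with the elementary identity $\sup_{\omega \in \Bbb{T}}|\omega a - b| = |a| + |b|$ yields
\[
|\varphi_2'(0)| + \tfrac12 |\varphi_1'(0)| \;=\; \sup_{\omega \in \Bbb{T}} \bigl|\omega \varphi_2'(0) - \tfrac12 \varphi_1'(0)\bigr| \;\leq\; 1,
\]
which is the stated bound.
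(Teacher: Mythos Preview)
The paper does not prove this theorem: it is quoted verbatim from \cite{AY2001} as background, with no argument supplied. There is therefore nothing in the present paper to compare your proposal against. Your outline is in fact a faithful sketch of the original Agler--Young proof, which is built on exactly the family $\Phi_\omega$ you describe; the necessity direction and the derivative bound are correct as you have written them.

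The only soft spot is your sufficiency argument. Saying you will ``seek a single-valued analytic section'' of the fibration $\Phi_{\omega_0}^{-1}(t)\cap\Gamma$ passing through the two prescribed points is a description of what is wanted rather than a construction, and it is not obvious from your account why such a section exists and lands in $\Gamma$. In the original paper the interpolant is written down explicitly, and indeed the present paper quotes that formula as Theorem~\ref{SchwarzlemforGammathm1.4}: the function $\varphi=(\varphi_1,\varphi_2)$ given by equations \eqref{phi1}--\eqref{phi2} is a rational $\Gamma$-inner map of degree two satisfying the interpolation conditions when the inequality in (2) is an equality (the strict case then follows by a rescaling). If you want a self-contained argument, you should either reproduce that explicit formula and verify it maps into $\Gamma$, or make precise the geometry of the fibres of $\Phi_{\omega_0}$ (they are complex geodesics of $\Bbb{G}$) so that the existence of the required section becomes clear.
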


The following theorem shows the construction of an interpolating function $\varphi$ satisfying the inequalities of Theorem \ref{SchwarzlemforGammathm1.1} with equality.

\begin{thm}\label{SchwarzlemforGammathm1.4}
\textup{\cite[Theorem 1.4]{AY2001}} Let $\lambda_0 \in \Bbb{D}$, and $(s_0, p_0) \in \Gamma$ be such that $\lambda_0 \neq 0$, $|s_0| <2$ and
$$\dfrac{2|s_0 - p_0\overline{s}_0| + |s_{0}^{2} - 4p_0|}{4 - |s_0|^2} = |\lambda_0|.$$
Then there exists an analytic function $\varphi : \Bbb{D} \rightarrow \Gamma$ such that $\varphi(0) = (0, 0)$ and $\varphi(\lambda_0) = (s_0, p_0)$, given explicitly as follows. \\
If $|p_0| = |\lambda_0|$, then 
\begin{equation}\label{phi0}
\varphi(\lambda) = (0, \omega \lambda),
\end{equation}
where $\omega$ is a complex number of unit modulus such that $\omega \lambda_0 = p_0$. \\
If $|p_0| \; \textless \; |\lambda_0|$, then $\varphi = (\varphi_1, \varphi_2)$ where
\begin{equation}\label{phi1}
\varphi_1(\lambda) = \frac{c \zeta \lambda}{(1 - \overline{\lambda}_{0}\lambda) (1 + \overline{p}_1 \zeta^2 v(\lambda))},
\end{equation}
\[v(\lambda) = \frac{\lambda - \lambda_0}{1 - \overline{\lambda}_0 \lambda}, \; \; \; \; \; \zeta \lambda_0|s_0| = |\lambda_0| s_0, \; \; \; \; \; |\zeta| = 1,\]
\[p_1 = \frac{p_0}{\lambda_0}, \; \; \; \; \; c = \frac{2}{|\lambda_0|}\{|\overline{\lambda}_0 - \overline{p}_0 \lambda_0 \zeta^2| - |\lambda_0^2 \zeta^2 - p_0|\},\]
\begin{equation}\label{phi2}
\varphi_2(\lambda) = \frac{\lambda(\zeta^2 v(\lambda) + p_1)}{1 + \overline{p}_1 \zeta^2 v(\lambda)}.
\end{equation}
\end{thm}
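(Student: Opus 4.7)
My first move would be to show that the hypotheses force $s_0 = 0$. Since the equality hypothesis places the data on the boundary of solvability for Theorem \ref{SchwarzlemforGammathm1.1}, the necessary condition (3) of that theorem must hold, namely
\[
\big|\, |\lambda_0|^2 s_0 - p_0 \overline{s_0}\,\big| + |p_0|^2 + (1 - |\lambda_0|^2)\tfrac{|s_0|^2}{4} - |\lambda_0|^2 \le 0.
\]
Substituting $|p_0|^2 = |\lambda_0|^2$ reduces this to the sum of two non-negative terms being non-positive, which forces $s_0 = 0$. Once this is established, the proposed $\varphi(\lambda) = (0, \omega\lambda)$ is plainly analytic, satisfies $\varphi(0) = (0,0)$ and $\varphi(\lambda_0) = (0, p_0) = (s_0, p_0)$ for $\omega = p_0/\lambda_0$, and maps $\D$ into $\Gamma$ by Proposition \ref{symm-bidisc}(2), since $|0 - \overline{0}\cdot\omega\lambda| = 0 \le 1 - |\omega\lambda|^2$.

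\textbf{Non-trivial case $|p_0| < |\lambda_0|$: interpolation.} I would next verify the two pointwise conditions. At $\lambda = 0$, both $\varphi_1$ and $\varphi_2$ vanish thanks to the explicit factor $\lambda$ in their numerators. At $\lambda = \lambda_0$, $v(\lambda_0) = 0$ gives immediately $\varphi_2(\lambda_0) = \lambda_0 p_1 = p_0$ and $\varphi_1(\lambda_0) = c\zeta\lambda_0/(1-|\lambda_0|^2)$. Matching $\varphi_1(\lambda_0) = s_0$ is then equivalent to the identity
\[
2\bigl\{\,|\overline{\lambda_0} - \overline{p_0}\lambda_0\zeta^2| - |\lambda_0^2\zeta^2 - p_0|\,\bigr\} = (1-|\lambda_0|^2)\,|s_0|,
\]
which, after substituting $\zeta\lambda_0 = |\lambda_0|s_0/|s_0|$ and using $|\zeta|=1$, is precisely the equality hypothesis $(2|s_0 - p_0\overline{s_0}| + |s_0^2 - 4p_0|)/(4-|s_0|^2) = |\lambda_0|$ of the theorem. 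This step is a routine but careful algebraic simplification.

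\textbf{Main obstacle: $\varphi(\D) \subset \Gamma$.} The heart of the proof is verifying $(\varphi_1(\lambda), \varphi_2(\lambda)) \in \Gamma$ for every $\lambda \in \D$. I would first note that $v$ is a disc automorphism and $M(z) = (z + p_1)/(1+\overline{p_1}z)$ is a self-map of $\D$ because $|p_1| = |p_0|/|\lambda_0| < 1$; hence $\varphi_2(\lambda) = \lambda \cdot M(\zeta^2 v(\lambda))$ sends $\D$ into $\D$ with $|\varphi_2(\lambda)| \le |\lambda|$. Membership in $\Gamma$ would then be tested through the magic functions $\Phi_\omega$: it suffices to show $|\Phi_\omega(\varphi(\lambda))| \le 1$ for every $\omega \in \T$ and $\lambda \in \D$. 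The plan is to compute $\Phi_\omega \circ \varphi$ on the boundary $\T$ (where $|v(e^{it})| = 1$) and to reduce the bound to a clean algebraic identity at the magic choice $\omega = \overline{\zeta^2}$, where numerator and denominator should combine into mutually conjugate expressions, giving a ratio of modulus one on $\T$. The constant $c$ is engineered precisely so that this boundary estimate is tight at $\lambda = \lambda_0$, and the maximum principle then extends the inequality from $\T$ to $\D$. A subsidiary analyticity check, that $2 - \omega\varphi_1$ is non-vanishing in $\D$, follows from the bound $|\varphi_1| \le 2$ on $\T$ (which would be verified along the way) and one further application of the maximum principle. This is the step where the algebraic bookkeeping is delicate and where the equality hypothesis is genuinely used.
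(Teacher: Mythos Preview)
The paper does not prove this theorem: it is quoted verbatim from \cite[Theorem~1.4]{AY2001} and used as a black box. So there is no ``paper's own proof'' to compare your proposal against. That said, the paper does establish, in Lemma~\ref{cor1302} and Proposition~\ref{SchwarzlemforGammathm1.4Gammainner}, that the formulas \eqref{phi1}--\eqref{phi2} define a rational $\Gamma$-inner function of degree~$2$, and its route is quite different from yours: it writes $\varphi_1 = E/D$, $\varphi_2 = D^{\sim 2}/D$ for explicit polynomials $E$, $D$ satisfying the hypotheses of the converse part of \cite[Proposition~2.2]{ALY18}, and then invokes that converse to conclude $\Gamma$-innerness directly. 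This structural route avoids any boundary estimate for $\Phi_\omega\circ\varphi$ altogether. The paper also uses, in the second proof of Proposition~\ref{SchwarzlemforGammathm1.4Gammainner}, the identity from \cite[Theorem~1.5]{AY2001}, namely equation~\eqref{AY2001(1.8)}, which encodes extremality at every $\lambda$ and from which the boundary condition $s = \bar s p$ on $\T$ falls out by taking radial limits.

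Your outline for the degenerate case and for the interpolation conditions is correct. The genuine gap is in your plan for showing $\varphi(\D)\subset\Gamma$: you propose to ``reduce the bound to a clean algebraic identity at the magic choice $\omega=\overline{\zeta^2}$'', but membership in $\Gamma$ requires $|\Phi_\omega(\varphi(\lambda))|\le 1$ for \emph{every} $\omega\in\T$, not just one. Verifying the bound at a single $\omega$ tells you only that $\varphi(\lambda)$ lies in one half-space, not in $\Gamma$. If you want to salvage the magic-function approach, you must either carry out the estimate uniformly in $\omega$ (which is doable but messy), or abandon it in favour of the $E$, $D$ representation as the paper does, which gives $\varphi(\D)\subset\Gamma$ for free once the four polynomial conditions are checked.
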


\begin{lem}\label{cor1302}
Consider the rational function $\varphi = (\varphi_1, \varphi_2): \Bbb{D} \rightarrow \Gamma$, where $\varphi_1, \varphi_2$ are defined as in equations \eqref{phi1} and \eqref{phi2} above. Define the polynomials $E$ and $D$ by the equations:
$$E(\lambda) = c \lambda,$$
$$D(\lambda) = \overline{\zeta}\{(1-\overline{\lambda_0}\lambda) + \overline{p_1}\zeta^2 (\lambda - \lambda_0)\},$$
where
$$|\zeta| = 1, \; \; \; p_1 = \dfrac{p_0}{\lambda_0}, \; \; \; \; \; c = \dfrac{2}{|\lambda_0|}\{|\overline{\lambda}_0 - \overline{p}_0 \lambda_0 \zeta^2| - |\lambda_0^2 \zeta^2 - p_0|\}.$$
Then $\varphi_1 = \dfrac{E}{D}$ and $\varphi_2 = \dfrac{D^{\sim 2}}{D}$. Moreover, $E^{\sim 2} = E$ and $|E(\lambda)| \leq 2 |D(\lambda)|$ on $\overline{\Bbb{D}}$.
\end{lem}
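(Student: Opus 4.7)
The plan is to verify the four assertions by essentially direct algebraic manipulation, using the explicit formulas for $\varphi_1,\varphi_2,v,E,D$.

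First I would clear the Blaschke denominator $v(\lambda)=(\lambda-\lambda_0)/(1-\overline{\lambda_0}\lambda)$ from both components of $\varphi$. Multiplying through by $1-\overline{\lambda_0}\lambda$ gives the key identity
\[
(1-\overline{\lambda_0}\lambda)\bigl(1+\overline{p}_1\zeta^2 v(\lambda)\bigr)=(1-\overline{\lambda_0}\lambda)+\overline{p}_1\zeta^2(\lambda-\lambda_0)=\zeta D(\lambda).
\]
Substituting into \eqref{phi1} immediately produces $\varphi_1(\lambda)=c\zeta\lambda/(\zeta D(\lambda))=c\lambda/D(\lambda)=E(\lambda)/D(\lambda)$. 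Applying the same trick to the numerator and denominator of \eqref{phi2} yields
\[
\varphi_2(\lambda)=\frac{\lambda\bigl[\zeta^2(\lambda-\lambda_0)+p_1(1-\overline{\lambda_0}\lambda)\bigr]}{\zeta D(\lambda)}.
\]

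Next I would compute $D^{\sim 2}$ directly from the definition $D^{\sim 2}(\lambda)=\lambda^2\overline{D(1/\overline{\lambda})}$. Using $|\zeta|=1$ and simplifying, the result is
\[
D^{\sim 2}(\lambda)=\overline{\zeta}\lambda\bigl[\zeta^2(\lambda-\lambda_0)+p_1(1-\overline{\lambda_0}\lambda)\bigr],
\]
so $D^{\sim 2}/D=\varphi_2$ after using $\overline\zeta=1/\zeta$. For the symmetry of $E$: $E^{\sim 2}(\lambda)=\lambda^2\overline{c/\overline{\lambda}}=\overline{c}\lambda$, and the formula $c=\frac{2}{|\lambda_0|}\bigl(|\overline{\lambda}_0-\overline{p}_0\lambda_0\zeta^2|-|\lambda_0^2\zeta^2-p_0|\bigr)$ makes $c$ real, hence $E^{\sim 2}=E$.

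For the bound $|E(\lambda)|\le 2|D(\lambda)|$ on $\overline{\Bbb{D}}$, I would combine Theorem \ref{SchwarzlemforGammathm1.4} (which guarantees $\varphi:\Bbb{D}\to\Gamma$) with Proposition \ref{symm-bidisc}(2), yielding $|\varphi_1(\lambda)|\le 2$ for all $\lambda\in\Bbb{D}$, hence $|E(\lambda)|\le 2|D(\lambda)|$ on $\Bbb{D}$. To extend this to $\overline{\Bbb{D}}$ I must check $D$ has no zeros on $\overline{\Bbb{D}}$: since the hypothesis of Theorem \ref{SchwarzlemforGammathm1.4} covers the case $|p_0|<|\lambda_0|$, we have $|p_1|=|p_0|/|\lambda_0|<1$, while $|v(\lambda)|\le 1$ on $\overline{\Bbb{D}}$, so $|\overline{p}_1\zeta^2 v(\lambda)|<1$ and $1+\overline{p}_1\zeta^2 v(\lambda)\ne 0$ throughout $\overline{\Bbb{D}}$; the factor $1-\overline{\lambda_0}\lambda$ is also nonvanishing on $\overline{\Bbb{D}}$ since $\lambda_0\in\Bbb{D}$. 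Then $E/D$ is continuous on $\overline{\Bbb{D}}$, and the bound extends by continuity.

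The main obstacle is the careful bookkeeping in computing $D^{\sim 2}$: one must expand $D$ to identify its coefficients as a polynomial of degree at most $2$, take complex conjugates correctly using $|\zeta|=1$ (so that $\overline{\zeta}=\zeta^{-1}$), and repackage the result into the compact form matching the numerator of $\varphi_2$. Once the $D^{\sim 2}$ computation is done, the remaining three items (identification of $\varphi_1$, the symmetry $E^{\sim 2}=E$, and the bound) are essentially immediate.
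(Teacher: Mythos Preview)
Your proposal is correct and follows essentially the same approach as the paper's proof: direct algebraic verification of each identity, computing $D^{\sim 2}$ from the definition, noting that $c\in\Bbb{R}$ for the symmetry of $E$, and deducing $|E|\le 2|D|$ from the fact that $\varphi$ maps $\Bbb{D}$ into $\Gamma$. Your additional care in verifying that $D$ has no zeros on $\overline{\Bbb{D}}$ (via $|p_1|<1$ and $|v|\le 1$) is a welcome detail that the paper's proof leaves implicit.
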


\begin{proof}
Let us check that $\varphi_1(\lambda) = \dfrac{E(\lambda)}{D(\lambda)}$.
\begin{eqnarray*}
\frac{E(\lambda)}{D(\lambda)} & = & \frac{c\lambda}{\overline{\zeta}\{(1-\overline{\lambda_0}\lambda) + \overline{p_1}\zeta^2 (\lambda - \lambda_0)\}} \times \frac{\zeta}{\zeta} \\
& = & \frac{c \zeta \lambda}{(1-\overline{\lambda_0}\lambda) + \overline{p_1}\zeta^2 (\lambda - \lambda_0)} \\
& = & \frac{c \zeta \lambda}{(1-\overline{\lambda_0}\lambda) (1 + \overline{p_1}\zeta^2 v(\lambda))} = \varphi_1(\lambda).
\end{eqnarray*}
To check that $\varphi_2(\lambda) = \dfrac{D^{\sim 2}(\lambda)}{D(\lambda)}$, we need to find $D^{\sim 2}(\lambda)$.
\begin{eqnarray*}
D^{\sim 2}(\lambda) = \lambda^2 \overline{D(1 / \overline{\lambda})} & = & \lambda^2 \overline{\overline{\zeta}\left\{\left(1- \frac{\overline{\lambda_0}}{\overline{\lambda}}\right) + \overline{p_1}\zeta^2\left(\frac{\ 1  \ }{\ \overline{\lambda}  \ }-\lambda_0\right)\right\}} \\
& = & \lambda^2 \zeta \left\{\left(1- \frac{\lambda_0}{\lambda}\right) + p_1 \overline{\zeta}^2 \left(\frac{1}{\lambda} - \overline{\lambda_0}\right)\right\} \\
& = & \lambda \zeta (\lambda - \lambda_0) + \lambda p_1 \overline{\zeta}(1-\overline{\lambda_0}\lambda).
\end{eqnarray*}
Now,
\begin{eqnarray*}
\frac{D^{\sim 2}(\lambda)}{D(\lambda)} & = & \frac{\lambda \zeta (\lambda - \lambda_0) + \lambda p_1 \overline{\zeta}(1-\overline{\lambda_0}\lambda)}{\overline{\zeta}\{(1-\overline{\lambda_0}\lambda) + \overline{p_1}\zeta^2 (\lambda - \lambda_0)\}} \times \frac{\left(\dfrac{\zeta}{1-\overline{\lambda_0}\lambda}\right)}{\left(\dfrac{\zeta}{1-\overline{\lambda_0}\lambda}\right)} \\
& = & \frac{\lambda \zeta^2 \left(\dfrac{\lambda - \lambda_0}{1-\overline{\lambda_0}\lambda}\right) + \lambda p_1}{1+\overline{p_1}\zeta^2 \left(\dfrac{\lambda - \lambda_0}{1-\overline{\lambda_0}\lambda}\right)} \\
& = & \frac{\lambda (\zeta^2 v(\lambda)+ p_1)}{1+\overline{p_1} \zeta^2 v(\lambda)} = \varphi_2(\lambda),
\end{eqnarray*}
where $v(\lambda) = \dfrac{\lambda - \lambda_0}{1-\overline{\lambda_0}\lambda}$. \\
We would like to show that $E^{\sim 2} = E$. For $\lambda \in \Bbb{D}$,
\begin{eqnarray*}
E^{\sim 2}(\lambda) = \lambda^2 \overline{E(1/\overline{\lambda})} & = & \lambda^2 \overline{\left(c \; \frac{\ 1 \ }{\ \overline{\lambda} \ }\right)} \\
& = & c \; \lambda = E(\lambda), \;\; \text{since} \; c \in \Bbb{R}.
\end{eqnarray*}
By assumption,  $\varphi = (\varphi_1, \varphi_2): \Bbb{D} \rightarrow \Gamma$, and we have proved that 
$\varphi_1 = \dfrac{E}{D}$, thus  $|E(\lambda)| \leq 2 |D(\lambda)|$ on $\overline{\Bbb{D}}$.
\end{proof}

\begin{prop}\label{SchwarzlemforGammathm1.4Gammainner}
Let $h = (s, p)$ be the function from $\Bbb{D}$ to $\Gamma$ defined by
$$s(\lambda) = \varphi_1(\lambda), \; p(\lambda) = \varphi_2(\lambda), \; \lambda \in \Bbb{D},$$
as in equations \eqref{phi1} and \eqref{phi2}. Then $h$ is a rational $\Gamma$-inner function of degree 2.
\end{prop}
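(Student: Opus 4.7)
The strategy is to combine the structural result of Lemma \ref{cor1302} with the converse direction of \cite[Proposition 2.2]{ALY18} already used in the proof of Theorem \ref{descripration}. Lemma \ref{cor1302} presents $s = E/D$ and $p = D^{\sim 2}/D$ with $E^{\sim 2}=E$, $\deg E, \deg D \leq 2$, and $|E| \leq 2|D|$ on $\overline{\Bbb{D}}$, so the only remaining hypothesis needed to invoke that converse direction is that $D$ does not vanish on $\overline{\Bbb{D}}$. Once this is established, the converse of \cite[Proposition 2.2]{ALY18} immediately yields that $h = (s, p)$ is a rational $\Gamma$-inner function of degree at most $2$.

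To verify the zero-free condition I would rewrite
\[
D(\lambda) = \overline{\zeta}\,(1-\overline{\lambda_0}\lambda)\bigl[\,1 + \overline{p_1}\zeta^2 v(\lambda)\bigr],
\]
where $v$ is the disc automorphism appearing in Theorem \ref{SchwarzlemforGammathm1.4}. Since the proposition concerns the regime of equations \eqref{phi1} and \eqref{phi2}, we have $|p_0| < |\lambda_0|$, hence $|p_1| < 1$. On $\overline{\Bbb{D}}$ the factor $1 - \overline{\lambda_0}\lambda$ never vanishes (because $|\lambda_0| < 1$), while the bracketed factor satisfies $|1 + \overline{p_1}\zeta^2 v(\lambda)| \geq 1 - |p_1| > 0$ since $|v(\lambda)| \leq 1$ and $|\zeta| = 1$. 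Hence $D$ is zero-free on $\overline{\Bbb{D}}$, as required.

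It then remains to upgrade ``degree $\leq 2$'' to ``degree $= 2$''. By Definition \ref{Gammadefn3.1} together with the remark citing \cite[Proposition 3.3]{ALY18}, $\deg h = \deg p$, where $p$ is viewed as a finite Blaschke product. Writing $D = d_0 + d_1\lambda$, a short computation gives $D^{\sim 2}(\lambda) = \lambda(\overline{d_0}\lambda + \overline{d_1})$, so that $p(\lambda) = \lambda \cdot (\overline{d_0}\lambda + \overline{d_1})/(d_0 + d_1\lambda)$ always carries the explicit factor $\lambda$. The remaining factor is either a unimodular constant (when $d_1 = 0$) or a genuine Blaschke factor with pole outside $\overline{\Bbb{D}}$ (when $d_1 \neq 0$, using again that $D$ is zero-free on $\overline{\Bbb{D}}$). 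In either case $\deg p = 2$.

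The main technical obstacle is the zero-free verification for $D$ on $\overline{\Bbb{D}}$; once that is in hand, the identification supplied by Lemma \ref{cor1302} together with the converse of \cite[Proposition 2.2]{ALY18} reduces the remainder of the argument to a brief inspection of the Blaschke structure of $p$.
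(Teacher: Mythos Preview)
Your approach is essentially the paper's own: invoke Lemma \ref{cor1302} together with the converse of \cite[Proposition 2.2]{ALY18}, then compute $\deg p$ from the explicit form of $D^{\sim 2}/D$. Your added verification that $D$ is zero-free on $\overline{\Bbb{D}}$ is a welcome detail the paper leaves implicit.

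There is one slip in your degree calculation. When $d_1 = 0$ the remaining factor $(\overline{d_0}\lambda + \overline{d_1})/(d_0 + d_1\lambda)$ equals $(\overline{d_0}/d_0)\lambda$, which is a degree-$1$ Blaschke factor, not a unimodular constant; had it genuinely been a unimodular constant, $p$ would have degree $1$, contradicting your stated conclusion. The conclusion $\deg p = 2$ is nevertheless correct in both cases (in the $d_1 = 0$ case because $p(\lambda) = (\overline{d_0}/d_0)\lambda^2$), so only the labelling of that subcase needs correction. The paper argues slightly differently, simply noting $\deg(D^{\sim 2}) = 2$ and $\deg(D) = 1$, which together with the zero-free property of $D$ on $\overline{\Bbb{D}}$ rules out cancellation.
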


\begin{proof}
By Lemma \ref{cor1302} and by the converse part of \cite[Proposition 2.2]{ALY18}, $h$ is a rational $\Gamma$-inner function.\\

Another way to prove that $h$ is a rational $\Gamma$-inner function is as follows.
One can easily see that $h = (s, p)$ is a rational function, and so there are only finitely many  singularities of this function. Hence we can extend  $h$ continuously to almost all points in $\Bbb{T}$.

Let us show that for almost all $\lambda \in \Bbb{T}, \; h(\lambda) \in b\Gamma$. We need to show that, for almost all $\lambda \in \Bbb{T}, \; |p(\lambda)| = 1, \; |s(\lambda)| \leq 2$ and $s(\lambda) = \overline{s(\lambda)} p(\lambda)$. Since $v(\lambda) = \dfrac{\lambda-\lambda_0}{1-\overline{\lambda_0}\lambda}$ is an inner function from $\Bbb{D}$ to $\overline{\Bbb{D}}$, for almost all $\lambda \in \Bbb{T}, \; |v(\lambda)| = 1$.
\begin{eqnarray*}
|p(\lambda)| & = & \frac{|\lambda| |\zeta^2 v(\lambda) + p_1|}{|1 + \overline{p_1} \zeta^2 v(\lambda)|} = \frac{|\zeta^2 v(\lambda) + p_1|}{|\zeta^2 \overline{\zeta^2} (v \overline{v})(\lambda) + \overline{p_1} \zeta^2 v(\lambda)|} \\
& = & \frac{|\zeta^2 v(\lambda) + p_1|}{|\zeta^2 v(\lambda) \overline{(\zeta^2 v(\lambda) + p_1)}|} = \frac{|\zeta^2 v(\lambda) + p_1|}{|\zeta^2 v(\lambda) + p_1|} = 1,
\end{eqnarray*}
for almost all $\lambda \in \Bbb{T}$. \\
In \cite[Theorem 1.5]{AY2001} it was proved that, for all $\lambda \in \Bbb{D}$,
\begin{equation}\label{AY2001(1.8)}
\big| |\lambda|^2 s(\lambda) - \overline{s(\lambda)}p(\lambda)\big| + |p(\lambda)|^2 + (1-|\lambda|^2)\dfrac{|s(\lambda)|^2}{4} - |\lambda|^2 = 0.
\end{equation}
Choose a sequence $(r_n)_{n \geq 1}$ such that $ 0 < r_n < 1$ for each $n$ and $\lim\limits_{n \to \infty} r_n = 1$.
Consider $\mu \in  \Bbb{T}$ and let $\lambda = r_n \mu$ in equation \eqref{AY2001(1.8)}.
On letting $n \to \infty$ we find that, for almost all $\mu \in \Bbb{T}$, 
\begin{equation}\label{AY2001(1.8)mu}
\big| |\mu|^2 s(\mu) - \overline{s(\mu)}p(\mu)\big| + |p(\mu)|^2 + (1-|\mu|^2)\dfrac{|s(\mu)|^2}{4} - |\mu|^2 = 0.
\end{equation}
Note that $|\mu| = 1$ and $|p(\mu)|^2=1$, and so equation \eqref{AY2001(1.8)mu} is equivalent to
$$|s(\mu) - \overline{s(\mu)}p(\mu)| = 0.$$
Hence $s(\mu) = \overline{s(\mu)}p(\mu)$ for almost all $\mu \in \Bbb{T}$. 
Note that for all $\lambda \in \Bbb{D}, \; h(\lambda) = (s(\lambda), p(\lambda)) \in \Gamma$, which means $|s(\lambda)| \leq 2$ for all $\lambda \in \Bbb{D}$. Thus, for almost all $\mu \in \Bbb{T}, \; |s(\mu)| \leq 2$. 
By \cite[Proposition 3.3]{ALY18}, $\text{deg}(h) = \text{deg}(p)$. By Lemma \ref{cor1302},
$$p(\lambda) = \varphi_2(\lambda) = \frac{D^{\sim 2}(\lambda)}{D(\lambda)} = \dfrac{\lambda \zeta (\lambda-\lambda_0) + \lambda p_1 \overline{\zeta} (1-\overline{\lambda_0}\lambda)}{\overline{\zeta}\{(1-\overline{\lambda_0}\lambda) + \overline{p_1}\zeta^2 (\lambda - \lambda_0)\}},$$
where $D(\lambda) = \overline{\zeta}\{(1-\overline{\lambda_0}\lambda) + \overline{p_1}\zeta^2 (\lambda - \lambda_0)\}$.
Since $\text{deg}(D^{\sim 2}) = 2$ and $\text{deg}(D) = 1$, then $\text{deg}(p) = 2$. Therefore $\text{deg}(h) = 2$.
\end{proof}

\section{Sharpness of the Schwarz lemma for $\mathcal{P}$}\label{SecSchwarzlemforP}

Recall  necessary conditions for a Schwarz lemma for $\mathcal{P}$ which  were given in \cite[Proposition 11.1]{ALY2015}.

\begin{prop}\label{pentaprop11.1}
\textup{\cite[Proposition 11.1]{ALY2015}} Let $\lambda_0 \in \Bbb{D}\setminus\{0\}$ and $(a_0, s_0, p_0) \in \overline{\mathcal{P}}$. If $x \in \textup{Hol}(\Bbb{D}, \mathcal{P})$ satisfies $x(0) = (0, 0, 0)$ and $x(\lambda_0) = (a_0, s_0, p_0)$ then $|s_0| <2$,
\begin{equation}\label{11.1}
\frac{2|s_0-\overline{s}_0 p_0|+|s_0^2-4p_0|}{4-|s_0|^2} \leq |\lambda_0|
\end{equation}
and
\begin{equation}\label{11.2}
|a_0|\Big/\left|1-\frac{\frac{1}{2}s_0\overline{\beta}}{1+\sqrt{1-|\beta|^2}}\right| \leq |\lambda_0|,
\end{equation}
where $\beta = (s_0-\overline{s}_0 p_0)/(1-|p_0|^2)$  when $|p_0| < 1$ and 
$\beta = \frac{1}{2}s_0$ when $|p_0| = 1$.
\end{prop}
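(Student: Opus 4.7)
The plan is to reduce both inequalities to the classical scalar Schwarz lemma, via the fibration $\mathcal{P}\to\Bbb{G}$ from Remark \ref{PGrelated} for \eqref{11.1}, and via the family $\{\Psi_z\}_{z\in\Bbb{D}}$ of scalar test functions from Definition \ref{Psi_z(a, s, p),kappa(s, p)} for \eqref{11.2}.

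Write $x=(a,s,p)$. First I would apply Lemma \ref{pentagammainner}(i) to conclude that $h:=(s,p):\Bbb{D}\to\Gamma$ is analytic, with $h(0)=(0,0)$ and $h(\lambda_0)=(s_0,p_0)$. The Schwarz lemma for the symmetrized bidisc (Theorem \ref{SchwarzlemforGammathm1.1}, implication $(1)\Rightarrow(2)$) then yields at once $|s_0|<2$ and inequality \eqref{11.1}.

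For \eqref{11.2}, I would fix $z\in\Bbb{D}$ and consider the scalar function
$$g_z(\lambda)=\Psi_z(x(\lambda))=\frac{a(\lambda)(1-|z|^2)}{1-s(\lambda)z+p(\lambda)z^2}.$$
Because $(s(\lambda),p(\lambda))\in\Bbb{G}$, Proposition \ref{symm-bidisc}(1) ensures that the denominator never vanishes for $\lambda,z\in\Bbb{D}$, so $g_z$ is analytic on $\Bbb{D}$; the equivalence $(1)\Leftrightarrow(4)$ in Theorem \ref{pentathm1.1} then gives $|g_z(\lambda)|<1$ for $\lambda\in\Bbb{D}$, while $g_z(0)=0$. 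The classical Schwarz lemma applied to $g_z$ delivers $|\Psi_z(a_0,s_0,p_0)|=|g_z(\lambda_0)|\le|\lambda_0|$ uniformly in $z\in\Bbb{D}$. Taking the supremum over $z$ produces
$$|a_0|\cdot K(s_0,p_0)\le|\lambda_0|,\qquad \text{where}\quad K(s_0,p_0):=\sup_{z\in\Bbb{D}}\frac{1-|z|^2}{|1-s_0z+p_0z^2|}.$$

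The remaining step, which is the only part requiring real work, is to identify $K(s_0,p_0)$ with $1/\bigl|1-\tfrac{\frac12 s_0\overline{\beta}}{1+\sqrt{1-|\beta|^2}}\bigr|$. For fixed $(s_0,p_0)\in\overline{\Bbb{G}}$, Theorem \ref{pentathm5.3} describes the slice $\{a\in\Bbb{C}:(a,s_0,p_0)\in\overline{\mathcal{P}}\}$ in two different ways: by the equivalence $(1)\Leftrightarrow(2)$ it equals the closed disc of radius $M(s_0,p_0):=\bigl|1-\tfrac{\frac12 s_0\overline{\beta}}{1+\sqrt{1-|\beta|^2}}\bigr|$ centred at $0$, while by $(1)\Leftrightarrow(4)$ it equals $\{a:|a|\le 1/K(s_0,p_0)\}$. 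Comparing these two descriptions forces $M(s_0,p_0)=1/K(s_0,p_0)$, with the convention on $\beta$ governed by Theorem \ref{pentathm5.3} in the cases $|p_0|<1$ and $|p_0|=1$; substituting then yields \eqref{11.2}. Thus the main obstacle is really just the closed-form evaluation of $K(s_0,p_0)$, and that has already been carried out inside Theorem \ref{pentathm5.3}; once invoked, the proof is a clean combination of two scalar Schwarz lemmas.
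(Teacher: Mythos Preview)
The paper does not prove Proposition~\ref{pentaprop11.1}; it is simply quoted from \cite[Proposition 11.1]{ALY2015} as background for Section~\ref{SecSchwarzlemforP}. Your argument is correct and is the natural one: project to $\Gamma$ via Remark~\ref{PGrelated} and invoke Theorem~\ref{SchwarzlemforGammathm1.1} for $|s_0|<2$ and \eqref{11.1}; then compose $x$ with each $\Psi_z$ to obtain analytic self-maps of $\Bbb{D}$ fixing $0$, apply the classical Schwarz lemma, and identify $\sup_{z\in\Bbb{D}}(1-|z|^2)/|1-s_0z+p_0z^2|$ with the reciprocal of the denominator in \eqref{11.2} by comparing the two descriptions $(2)$ and $(4)$ of the fibre $\{a:(a,s_0,p_0)\in\overline{\mathcal{P}}\}$ in Theorem~\ref{pentathm5.3}. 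One cosmetic point: Proposition~\ref{symm-bidisc}(1) does not literally state the non-vanishing of $1-s(\lambda)z+p(\lambda)z^2$; what you use is that $(s(\lambda),p(\lambda))\in\Bbb{G}$ forces the roots of $w^2-s(\lambda)w+p(\lambda)$ to lie in $\Bbb{D}$, so the quadratic in $z$ factors as $(1-\mu_1 z)(1-\mu_2 z)$ with $\mu_i\in\Bbb{D}$ and hence is nonzero for $z\in\Bbb{D}$.
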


We prove the following result.

\begin{thm}\label{thmtr2911}
Let $\lambda_0 \in \Bbb{D}\setminus\{0\}$, and $(a_0, s_0, p_0) \in \overline{\mathcal{P}}$ be such that $|s_0| <2$,
\begin{equation}\label{s0p0h0}
\dfrac{2|s_0 - p_0\overline{s}_0| + |s_{0}^{2} - 4p_0|}{4 - |s_0|^2} = |\lambda_0|
\end{equation}
and
\begin{equation}\label{a0s0h0}
|a_0| \; \leq \; |\lambda_0| \sqrt{1-\frac{1}{4}|s_0|^2}.
\end{equation}
Then there exists a rational $\overline{\mathcal{P}}$-inner function $x : \Bbb{D} \rightarrow \overline{\mathcal{P}}$ such that $x(0) = (0, 0, 0)$ and $x(\lambda_0) = (a_0, s_0, p_0)$
given explicitly as follows. \\

\noindent {\rm (i)} If $|p_0| = |\lambda_0|$, then  $s_0= 0$ and
$x(\lambda) = (a(\lambda), 0, \omega \lambda),$
where  $\omega \lambda_0 = p_0, \; \omega \in \Bbb{T}$ and\\

{\rm (a)} if $|a_0| = |\lambda_0|$, then, for $  \lambda \in \Bbb{D}$,  $a (\lambda) = \kappa \lambda, \;$ where $ |\kappa|=1$ and  $\kappa \lambda_0 = a_0$;\\

{\rm (b)} if $|a_0| < |\lambda_0|$, then
$$a(\lambda) = \lambda \dfrac{\lambda - \lambda_0+\eta_0(1-\overline{\lambda_0}\lambda)}{1-\overline{\lambda_0}\lambda+\overline{\eta_0}(\lambda - \lambda_0)}, \; \lambda \in \Bbb{D},$$
and $\eta_0 = \dfrac{a_0}{\lambda_0}$.\\

\noindent {\rm (ii)} If $|p_0| \; \textless \; |\lambda_0|$, then 
$x(\lambda) = (a(\lambda), \varphi_1(\lambda), \varphi_2(\lambda)),$
where $\varphi_1$ is defined by equation \eqref{phi1}, $\varphi_2$ is defined by equation \eqref{phi2} and \\

{\rm (a)} if $|a_0| = |\lambda_0| \sqrt{1-\frac{1}{4}|s_0|^2}$, then, for $  \lambda \in \Bbb{D}$,
$$a(\lambda) =\gamma  \lambda \frac{A(\lambda)}{D(\lambda)},$$
where $ |\gamma|=1$ such that   $\;\gamma \lambda_0 \sqrt{1-\frac{1}{4}|s_0|^2} = a_0$,
$A(\lambda) = b_0(1+b_1\lambda)$
 is an outer   polynomial of degree $1$ such that
\begin{equation}\label{A=ED-thm}
|A(\lambda)|^2 = |D(\lambda)|^2-\frac{1}{4}|E(\lambda)|^2,
\end{equation}
and
\begin{equation}\label{EandD-thm}
E(\lambda) = c \lambda,\; \; 
D(\lambda) = \overline{\zeta}\{(1-\overline{\lambda_0}\lambda) + \overline{p_1}\zeta^2 (\lambda - \lambda_0)\},
\end{equation}
with
$$|\zeta| = 1, \; \; \; p_1 = \dfrac{p_0}{\lambda_0}, \; \; \; \; \; c = \dfrac{2}{|\lambda_0|}\{|\overline{\lambda}_0 - \overline{p}_0 \lambda_0 \zeta^2| - |\lambda_0^2 \zeta^2 - p_0|\}.$$

{\rm (b)} if $|a_0| < |\lambda_0| \sqrt{1-\frac{1}{4}|s_0|^2}$, then, for $  \lambda \in \Bbb{D}$,

$$a(\lambda) = \lambda \left(\frac{\lambda - \lambda_0+\mu_0(1-\overline{\lambda_0}\lambda)}{1-\overline{\lambda_0}\lambda+\overline{\mu_0}(\lambda-\lambda_0)} \right) \frac{A(\lambda)}{D(\lambda)},$$
where $\mu_0 = \dfrac{a_0}{\lambda_0 \sqrt{1-\frac{1}{4}|s_0|^2}}$ and polynomials $A, E, D$ are defined by equations \eqref{EandD-thm} and \eqref{A=ED-thm}.
\end{thm}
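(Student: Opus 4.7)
The plan is to build $x=(a,s,p)$ case by case according as $|p_0|=|\lambda_0|$ or $|p_0|<|\lambda_0|$, in each case assembling $(s,p)$ from the Agler--Young Schwarz lemma for the symmetrized bidisc (Theorem \ref{SchwarzlemforGammathm1.4}) and then choosing $a$ so that $x=(a,s,p)$ is rational $\overline{\mathcal{P}}$-inner with $x(0)=(0,0,0)$ and $x(\lambda_0)=(a_0,s_0,p_0)$.

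Case (i), $|p_0|=|\lambda_0|$: the extremal formula \eqref{phi0} forces the $\Gamma$-interpolant to be $(s,p)(\lambda)=(0,\omega\lambda)$ with $\omega\lambda_0=p_0$; indeed $|\varphi_2(\lambda_0)|=|\lambda_0|$ together with the classical Schwarz equality for $\varphi_2$ and the rigidity of $(\varphi_1,\omega\lambda)\in\Gamma$ throughout $\mathbb{D}$ forces $\varphi_1\equiv 0$, so in particular $s_0=0$. Since $s\equiv 0$, the penta-inner requirement on $\mathbb{T}$ reduces to $|a|=1=\sqrt{1-\frac{1}{4}|s|^2}$, i.e.\ $a$ must be a finite Blaschke product. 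In sub-case (a), $a(\lambda)=\kappa\lambda$ with $\kappa=a_0/\lambda_0\in\mathbb{T}$ is the unique degree-one solution; in sub-case (b) the stated $a$ is $\lambda$ times a Mobius transform applied to the Blaschke factor $B_{\lambda_0}$, hence inner, and direct substitution verifies $a(0)=0$ and $a(\lambda_0)=\lambda_0\eta_0=a_0$.

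Case (ii), $|p_0|<|\lambda_0|$: Theorem \ref{SchwarzlemforGammathm1.4} together with Proposition \ref{SchwarzlemforGammathm1.4Gammainner} supplies a rational $\Gamma$-inner $(s,p)=(\varphi_1,\varphi_2)$ of degree $2$ interpolating $(0,0)\mapsto(s_0,p_0)$, and Lemma \ref{cor1302} identifies $(s,p)=(E/D,D^{\sim 2}/D)$ with $E$, $D$ as in \eqref{EandD-thm}. The trigonometric polynomial $|D|^2-\tfrac14|E|^2$ is non-negative on $\mathbb{T}$ (since $|E|\le 2|D|$ there, by Lemma \ref{cor1302}) and of degree at most one (because $|E|^2=c^2$ is constant on $\mathbb{T}$), so Fej\'er--Riesz (Theorem \ref{Fejerthm}) produces an outer polynomial $A=b_0(1+b_1\lambda)$ of degree at most one with $|A|^2=|D|^2-\tfrac14|E|^2$ on $\mathbb{T}$. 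Theorem \ref{prop446} then guarantees that $x=(a_{in}A/D,\,E/D,\,D^{\sim 2}/D)$ is rational $\overline{\mathcal{P}}$-inner for any finite Blaschke product $a_{in}$. I take $a_{in}(\lambda)=\gamma\lambda$ in sub-case (a) and $a_{in}(\lambda)=\lambda$ times the Mobius-Blaschke factor in the statement in sub-case (b); in the latter the Mobius factor is inner on $\mathbb{D}$ and evaluates to $\mu_0$ at $\lambda_0$ since $B_{\lambda_0}(\lambda_0)=0$.

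The crux of the argument is verifying $a(\lambda_0)=a_0$. Direct substitution in \eqref{EandD-thm} gives $D(\lambda_0)=\bar\zeta(1-|\lambda_0|^2)$, and the Fej\'er--Riesz coefficients $b_0,b_1$ can be read off from the explicit $E,D$. The key identity to establish is
\begin{equation*}
\frac{A(\lambda_0)}{D(\lambda_0)}=\sqrt{1-\tfrac14|s_0|^2},
\end{equation*}
with a phase absorbed into the unimodular normalization of $A$. This identity rests on the equality assumption \eqref{s0p0h0} together with the Agler--Young identity \eqref{AY2001(1.8)} satisfied by the extremal $\Gamma$-interpolant; it is the step requiring genuine algebraic work, and is the main obstacle in the proof. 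Once it is in hand, the prescription $\gamma\lambda_0\sqrt{1-\tfrac14|s_0|^2}=a_0$ in (ii)(a), respectively the choice $\mu_0=a_0/(\lambda_0\sqrt{1-\tfrac14|s_0|^2})$ with $|\mu_0|<1$ in (ii)(b), delivers $a(\lambda_0)=a_0$, while $a(0)=0$ is immediate. Theorem \ref{prop446} then completes the verification that $x$ is rational $\overline{\mathcal{P}}$-inner.
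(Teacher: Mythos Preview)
Your proposal follows the paper's proof closely: in both cases you take the Agler--Young extremal $\Gamma$-inner interpolant from Theorem~\ref{SchwarzlemforGammathm1.4}, write it as $(E/D,\,D^{\sim 2}/D)$ via Lemma~\ref{cor1302}, obtain $A$ by Fej\'er--Riesz, invoke Theorem~\ref{prop446} to produce a rational $\overline{\mathcal{P}}$-inner function of the shape $(cB\,A/D,\,E/D,\,D^{\sim 2}/D)$, and then select the Blaschke factor $B$ to meet the two interpolation conditions on $a$. The case split $|p_0|=|\lambda_0|$ versus $|p_0|<|\lambda_0|$ and the explicit M\"obius--Blaschke factors you write down are exactly those of the paper.

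The one substantive difference lies in how the key step is treated. You correctly isolate the identity
\[
\frac{A(\lambda_0)}{D(\lambda_0)}=\sqrt{1-\tfrac14|s_0|^2}\qquad(\text{up to a unimodular phase})
\]
as the crux and flag it as ``the step requiring genuine algebraic work''. The paper, by contrast, dispatches it in one line by asserting that $|A(\lambda)|^2/|D(\lambda)|^2 = 1-\tfrac14|s(\lambda)|^2$ holds for \emph{all} $\lambda\in\mathbb{D}$ and then reading off the value at $\lambda_0$. Be aware, however, that the Fej\'er--Riesz relation \eqref{A=ED-thm} is only established on $\mathbb{T}$, and an equality of moduli of analytic functions does not propagate automatically from $\mathbb{T}$ into $\mathbb{D}$; so the paper's justification of this step is in fact no more complete than yours. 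Your instinct that this point needs a direct verification (using the explicit $E$, $D$ and the extremality condition \eqref{s0p0h0}) is the right one.
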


\begin{proof}
Since $(a_0, s_0, p_0) \in \overline{\mathcal{P}}$, we have $(s_0, p_0) \in \Gamma$. By assumption the equality \eqref{s0p0h0} holds. Hence, by Theorem \ref{SchwarzlemforGammathm1.4}, there exists a rational analytic function
$$\varphi : \Bbb{D} \rightarrow \Gamma : \lambda \longmapsto (s(\lambda), p(\lambda))$$
such that $\varphi(0) = (0, 0)$ and $\varphi(\lambda_0) = (s_0, p_0)$.
It is easy to see that the function $\varphi = (s, p)$ from $\Bbb{D}$ to $\Gamma$ defined 
as in equation \eqref{phi0} 
 is a rational $\Gamma$-inner function of degree $1$.
 By Proposition \ref{SchwarzlemforGammathm1.4Gammainner}, the function $\varphi = (s, p)$ from $\Bbb{D}$ to $\Gamma$ defined by
$$s(\lambda) = \varphi_1(\lambda), \; p(\lambda) = \varphi_2(\lambda), \; \lambda \in \Bbb{D},$$
as in equations \eqref{phi1} and \eqref{phi2} 
 is a rational $\Gamma$-inner function of degree $2$.
 By Theorem \ref{SchwarzlemforGammathm1.1}, it follows from equation \eqref{lam0s0p0} that $|p_0| \leq |\lambda_0| \; \textless \; 1$. Let us consider two cases. \\

 {\bf Case (i)}. Let  $|p_0| = |\lambda_0|$.  By Theorem \ref{SchwarzlemforGammathm1.4}, if $|p_0|=|\lambda_0|$, then $s_0= 0$ and the function
$\varphi = (0, \omega \lambda)$ from $\Bbb{D}$ to $\Gamma$, where $\omega$ is a complex number of unit modulus such that $\omega \lambda_0 = p_0$, is a rational $\Gamma$-inner function of degree $1$.
Since $s_0=0$, the assumption \eqref{a0s0h0} $|a_0| \leq |\lambda_0| \sqrt{1-\frac{1}{4}|s_0|^2}$ is equivalent to  $|a_0| \leq |\lambda_0|$.
 It is easy to see that, for $\lambda \in \Bbb{D}$, $s(\lambda) = \dfrac{E(\lambda)}{D(\lambda)}$ and $p(\lambda) = \dfrac{D^{\sim n}}{D}(\lambda)$, 
 where polynomials  $E(\lambda)=0$ and $D(\lambda) = \overline{\omega_1}$ and $\omega_1^2=\omega$.
By Theorem \ref{prop446}, we can construct a rational $\overline{\mathcal{P}}$-inner function 
$$x = \left(cB \dfrac{A}{D}, 0, \omega \lambda\right), 
\text{ for an arbitrary finite Blaschke product } B \text{ and } |c| = 1,$$
where $A$ is a non-zero constant such that
$$|A(\lambda)|^2 = |D(\lambda)|^2-\frac{1}{4}|E(\lambda)|^2= |D(\lambda)|^2= | \overline{\omega_1}|^2 =1.$$ 
It is sufficient to construct $a : \Bbb{D} \rightarrow \Bbb{C}$ of the form $a = c B$ with $|c|=1$ such that $a(0) = 0$ and $a(\lambda_0) = a_0$. \\

If $|a_0| = |\lambda_0|$, then, for $\lambda \in \Bbb{D}$, we define 
$a(\lambda) = \kappa \lambda  \; \; \text{for} \;\;\lambda \in \Bbb{D}, $
 where $ |\kappa|=1$ and  $\kappa \lambda_0 = a_0$.
It is easy to see that  $a(0) = 0$ and $a(\lambda_0) = a_0$. \\
 
Let $|a_0| \; \textless \; |\lambda_0|$, and let
$\eta_0 \stackrel{\textup{def}}{=} \frac{a_0}{\lambda_0},$
it is clear that $\eta_0 \in \Bbb{D}$. 
Let us  define $a$ by the formula
$$ a(\lambda) = \lambda B_{\eta_0}^{-1} \circ B_{\lambda_0}(\lambda) \; \; \text{for} \;\;\lambda \in \Bbb{D}. $$
Here the Blaschke factors are defined by 
$$B_{\eta_0}^{-1}(z) = \frac{z+\eta_0}{1+\overline{\eta_0}z} \text{ and } B_{\lambda_0}(z) = \frac{z-\lambda_0}{1-\overline{\lambda_0}z} \; \text{for} \; z \in  \Bbb{D}.$$
It is easy to see that  $a(0) = 0$ and $a(\lambda_0) = a_0$. \\

Define a rational $\overline{\mathcal{P}}$-inner function $x : \Bbb{D} \rightarrow \overline{\mathcal{P}}$ by
$x(\lambda) = (a(\lambda), 0, \omega \lambda),$
where  $\omega \lambda_0 = p_0, \; \omega \in \Bbb{T}$ and\\

{\rm (a)} if $|a_0| = |\lambda_0|$, then, for $  \lambda \in \Bbb{D}$,  $a (\lambda) = \kappa \lambda, \;$ where $ |\kappa|=1$ and  $\kappa \lambda_0 = a_0$;\\

{\rm (b)} if $|a_0| < |\lambda_0|$, then
$$a(\lambda) = \lambda \dfrac{\lambda - \lambda_0+\eta_0(1-\overline{\lambda_0}\lambda)}{1-\overline{\lambda_0}\lambda+\overline{\eta_0}(\lambda - \lambda_0)}, \; \lambda \in \Bbb{D},$$
and $\eta_0 = \dfrac{a_0}{\lambda_0}$. This function $x$ satisfies the conditions $x(0) = (0, 0, 0)$ and $x(\lambda_0) = (a_0, s_0, p_0)$. \\

{\bf Case (ii)}. Let $|p_0| \; \textless \; |\lambda_0|$. By Lemma \ref{cor1302}, for  the rational  $\Gamma$-inner function  $\varphi = (s, p)$ from $\Bbb{D}$ to $\Gamma$ defined by
$$s(\lambda) = \varphi_1(\lambda), \; p(\lambda) = \varphi_2(\lambda), \; \lambda \in \Bbb{D},$$
as in equations \eqref{phi1} and \eqref{phi2}, there exist polynomials $E$ and $D$
\begin{equation}\label{EandD}
E(\lambda) = c \lambda,\; \; 
D(\lambda) = \overline{\zeta}\{(1-\overline{\lambda_0}\lambda) + \overline{p_1}\zeta^2 (\lambda - \lambda_0)\},
\end{equation}
where
$$|\zeta| = 1, \; \; \; p_1 = \dfrac{p_0}{\lambda_0}, \; \; \; \; \; c = \dfrac{2}{|\lambda_0|}\{|\overline{\lambda}_0 - \overline{p}_0 \lambda_0 \zeta^2| - |\lambda_0^2 \zeta^2 - p_0|\},$$
such that $s = \dfrac{E}{D}$ and $p= \dfrac{D^{\sim 2}}{D}$. Moreover, $E^{\sim 2} = E$ and $|E(\lambda)| \leq 2 |D(\lambda)|$ on $\overline{\Bbb{D}}$.

Then, by Theorem \ref{prop446}, we can construct a rational $\overline{\mathcal{P}}$-inner function 
$$x = \left(cB \dfrac{A}{D}, \dfrac{E}{D}, \dfrac{D^{\sim n}}{D}\right), 
\text{  for an arbitrary finite Blaschke product } B \text{ and } |c| = 1,$$
where $A= b_0(1+b_1 \lambda)$ is an outer polynomial of degree $1$ such that
\begin{equation}\label{A=DE}
|A(\lambda)|^2 = |D(\lambda)|^2-\frac{1}{4}|E(\lambda)|^2.
\end{equation}
We would like to construct $a : \Bbb{D} \rightarrow \Bbb{C}$ of the form $a = c B \dfrac{A}{D}$ such that $a(0) = 0$ and $a(\lambda_0) = a_0$. \\

Since, for $\lambda \in \Bbb{D}$,
$$\frac{|A(\lambda)|^2}{|D(\lambda)|^2} = 1-\frac{1}{4}|s(\lambda)|^2,$$
we get $\big|\dfrac{A}{D}(\lambda_0) \big| =  \sqrt{1-\frac{1}{4}|s_0|^2}$.
Let $B(\lambda) = \lambda \tilde{B}(\lambda),$  with some finite Blaschke product
 $ \tilde{B}$. Then $B(0) = 0$, and so $a(0) = 0$. Recall we require $a(\lambda_0) = a_0$,
\begin{eqnarray*}
a_0 & = & a(\lambda_0) = c B(\lambda_0) \dfrac{A}{D}(\lambda_0)\\
& = & c \lambda_0 \tilde{B}(\lambda_0) \sqrt{1-\frac{1}{4}|s_0|^2},
\end{eqnarray*}
for some  $|c| = 1$.
By assumption \eqref{a0s0h0},
$$|a_0| \; \leq \; |\lambda_0| \sqrt{1-\frac{1}{4}|s_0|^2} \;\text{and}  \; \; |s_0| <2, \:\; \text{ thus } \frac{|a_0|}{|\lambda_0| \sqrt{1-\frac{1}{4}|s_0|^2}} \; \leq \; 1.$$

Suppose $|a_0| < |\lambda_0| \sqrt{1-\frac{1}{4}|s_0|^2}$, and let
$$\mu_0 \stackrel{\textup{def}}{=} \frac{a_0}{\lambda_0 \sqrt{1-\frac{1}{4}|s_0|^2}}.$$
It is clear that $\mu_0 \in \Bbb{D}$. 
We need to find $\tilde{B} : \Bbb{D} \rightarrow \Bbb{D}$ such that $\tilde{B}(\lambda_0) = \mu_0$. Let $\tilde{B}= B_{\mu_0}^{-1} \circ B_{\lambda_0}$, where
$$B_{\mu_0}^{-1}(z) = \frac{z+\mu_0}{1+\overline{\mu_0}z} \text{ and } B_{\lambda_0}(z) = \frac{z-\lambda_0}{1-\overline{\lambda_0}z}.$$
Then, for all $z \in \Bbb{D}$,
\begin{eqnarray*}
\tilde{B}(z) & = & B_{\mu_0}^{-1} \circ B_{\lambda_0}(z) = \frac{\dfrac{z-\lambda_0}{1-\overline{\lambda_0}z}+\mu_0}{1+\overline{\mu_0}\left(\dfrac{z-\lambda_0}{1-\overline{\lambda_0}z}\right)} \\
& = & \frac{z-\lambda_0+\mu_0(1-\overline{\lambda_0}z)}{1-\overline{\lambda_0}z+\overline{\mu_0}(z-\lambda_0)}.
\end{eqnarray*}
Let us define $a : \Bbb{D} \rightarrow \Bbb{C}$, for $\lambda \in \Bbb{D}$, by
$$a(\lambda) = \lambda \tilde{B}(\lambda) \dfrac{A(\lambda)}{D(\lambda)}= \lambda \left(\frac{\lambda - \lambda_0+\mu_0(1-\overline{\lambda_0}\lambda)}{1-\overline{\lambda_0}\lambda+\overline{\mu_0}(\lambda-\lambda_0)} \right) \frac{A(\lambda)}{D(\lambda)}.$$
Note that $a(0) = 0$ and  $\tilde{B}(\lambda_0) = \dfrac{\mu_0(1-\overline{\lambda_0}\lambda_0)}{1-\overline{\lambda_0}\lambda_0} = \mu_0$. Therefore
\begin{eqnarray*}
a(\lambda_0) & = & \lambda_0 \mu_0 \frac{A(\lambda_0)}{D(\lambda_0)} \\
& = & \lambda_0 \frac{a_0}{\lambda_0 \sqrt{1-\frac{1}{4}|s_0|^2}} \sqrt{1-\frac{1}{4}|s_0|^2} \\
& = & a_0.
\end{eqnarray*}

Hence, in the case when $|p_0| \; \textless \; |\lambda_0|$, we define a 
rational $\overline{\mathcal{P}}$-inner function $x : \Bbb{D} \rightarrow \overline{\mathcal{P}}$ by
$x(\lambda) = (a(\lambda), \varphi_1(\lambda), \varphi_2(\lambda)), \;\ \text{for all } \;\; \lambda \in \Bbb{D},$
where
$\varphi_1$ is defined by equation \eqref{phi1}, $\varphi_2$ is defined by equation \eqref{phi2} and  \\

{\rm (a)} if $|a_0| = |\lambda_0| \sqrt{1-\frac{1}{4}|s_0|^2}$, then, for  $\lambda \in \Bbb{D}$,
$$a(\lambda) = \gamma \lambda \frac{A(\lambda)}{D(\lambda)},$$
where  $ |\gamma|=1$  such that   $\;\gamma \lambda_0 \sqrt{1-\frac{1}{4}|s_0|^2} = a_0$.\\

{\rm (b)} if $|a_0| < |\lambda_0| \sqrt{1-\frac{1}{4}|s_0|^2}$, then

$$a(\lambda) = \lambda \left(\frac{\lambda - \lambda_0+\mu_0(1-\overline{\lambda_0}\lambda)}{1-\overline{\lambda_0}\lambda+\overline{\mu_0}(\lambda-\lambda_0)} \right) \frac{A(\lambda)}{D(\lambda)},$$
where $\mu_0 = \dfrac{a_0}{\lambda_0 \sqrt{1-\frac{1}{4}|s_0|^2}}$, and polynomials $A$, $E$ and $D$ are defined by equations \eqref{EandD} and \eqref{A=DE}.

One can verify that a suitable choice of $A$ is $A(\lambda) = b_0(1+b_1\lambda)$, where
$$|b_0|^2 = |1-\overline{p_1}\zeta^2 \lambda_0|^2 \;\; \text{and} \;\;
|b_1|^2  = 2 \; \frac{|\lambda_0 \zeta^2 - p_1|}{|1-\overline{p_1}\zeta^2 \lambda_0|} - 1.$$

\end{proof}

\begin{thm}\label{Schwarz-P-inner}
Let $\lambda_0 \in \Bbb{D}\setminus\{0\}$ and $(a_0, s_0, p_0) \in \overline{\mathcal{P}}$.
Then the following conditions are equivalent:

{\rm (i)}  there exists a rational $\overline{\mathcal{P}}$-inner function $x=(a, s, p)$,  $x : \Bbb{D} \rightarrow \overline{\mathcal{P}}$ such that $x(0) = (0, 0, 0)$ and $x(\lambda_0) = (a_0, s_0, p_0)$;

{\rm (ii)}  there exists an analytic function $x=(a, s, p)$,  $x : \Bbb{D} \rightarrow \overline{\mathcal{P}}$ such that  $x(0) = (0, 0, 0)$  and $x(\lambda_0) =(a_0, s_0, p_0)$, and $|a_0| \; \leq \; |\lambda_0| \sqrt{1-\frac{1}{4}|s_0|^2}$; 

{\rm (iii)}
\begin{equation}\label{s0p0h0-2}
\dfrac{2|s_0 - p_0\overline{s}_0| + |s_{0}^{2} - 4p_0|}{4 - |s_0|^2} \le |\lambda_0|, \;\;|s_0| <2,
\end{equation}
and
\begin{equation}\label{a0s0h0-2}
|a_0| \; \leq \; |\lambda_0| \sqrt{1-\frac{1}{4}|s_0|^2}.
\end{equation}
\end{thm}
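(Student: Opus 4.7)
I plan to close the cycle (i) $\Rightarrow$ (ii) $\Rightarrow$ (iii) $\Rightarrow$ (i). For (i) $\Rightarrow$ (ii) the rational $\overline{\mathcal{P}}$-inner function $x$ of (i) serves as the analytic function demanded in (ii), and the only extra content to verify is the bound $|a_0|\le|\lambda_0|\sqrt{1-\tfrac{1}{4}|s_0|^2}$. I would obtain it from a vector-valued Schwarz lemma: setting $H(\lambda):=(a(\lambda), s(\lambda)/2)\in\mathbb{C}^2$, the boundary identity $|a|^2+\tfrac{1}{4}|s|^2=1$ on $b\overline{\mathcal{P}}$ forces $\|H(\lambda)\|_{\mathbb{C}^2}=1$ a.e.\ on $\mathbb{T}$, so applying the argument of Lemma \ref{lem231121} to $\lambda\mapsto\langle H(\lambda),v\rangle$ for unit vectors $v\in\mathbb{C}^2$ yields $\|H(\lambda)\|_{\mathbb{C}^2}\le 1$ on $\mathbb{D}$. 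Since $H(0)=0$, the scalar Schwarz lemma upgrades this to $\|H(\lambda_0)\|_{\mathbb{C}^2}\le|\lambda_0|$, i.e.\ $|a_0|^2+\tfrac{1}{4}|s_0|^2\le|\lambda_0|^2$, which is in fact strictly stronger than the bound in (ii) (since $|\lambda_0|<1$ gives $|\lambda_0|^2-\tfrac{1}{4}|s_0|^2\le|\lambda_0|^2(1-\tfrac{1}{4}|s_0|^2)$). For (ii) $\Rightarrow$ (iii), Lemma \ref{pentagammainner}(i) gives $h:=(s,p)\in\hol(\mathbb{D},\Gamma)$ with $h(0)=(0,0)$ and $h(\lambda_0)=(s_0,p_0)$; Theorem \ref{SchwarzlemforGammathm1.1} (the Agler--Young Schwarz lemma for $\Gamma$) then supplies the first inequality in (iii), while the second is exactly the hypothesis of (ii).

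For (iii) $\Rightarrow$ (i), set $r:=(2|s_0-p_0\overline{s}_0|+|s_0^2-4p_0|)/(4-|s_0|^2)$. If $r=|\lambda_0|$ the conclusion is precisely Theorem \ref{thmtr2911}. The substantive case is $r<|\lambda_0|$, which I plan to reduce to the equality case by composing with a finite Blaschke product. Choose $\hat\lambda_0\in\mathbb{D}$ with $|\hat\lambda_0|=r$ and $\arg\hat\lambda_0=\arg\lambda_0$, so equality in the $\Gamma$-Schwarz bound holds at $\hat\lambda_0$. Theorem \ref{SchwarzlemforGammathm1.4} yields an explicit rational $\Gamma$-inner $h_0$ with $h_0(0)=(0,0)$ and $h_0(\hat\lambda_0)=(s_0,p_0)$, and via Theorem \ref{descripration} we obtain associated polynomials $E_0,D_0$ and an outer $A_0$ with $|A_0|^2=|D_0|^2-\tfrac{1}{4}|E_0|^2$ on $\mathbb{T}$ together with the extremal identity $|A_0(\hat\lambda_0)/D_0(\hat\lambda_0)|=\sqrt{1-\tfrac{1}{4}|s_0|^2}$ built into the formulas (cf.\ Theorem \ref{thmtr2911}(ii)). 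Pick a finite Blaschke product $\psi(\lambda)=u\lambda B_\alpha(\lambda)$ with $\alpha\in\mathbb{D}$, $u\in\mathbb{T}$ chosen so that $\psi(0)=0$ and $\psi(\lambda_0)=\hat\lambda_0$; this is possible because $|\hat\lambda_0/\lambda_0|=r/|\lambda_0|\le 1$. Set $h:=h_0\circ\psi$, which is a rational $\Gamma$-inner function with $h(0)=(0,0)$ and $h(\lambda_0)=(s_0,p_0)$.

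Let $(E,D,A)$ be the polynomials associated to $h$ by Theorem \ref{descripration}. The pivotal step is the identity $|A(\lambda_0)/D(\lambda_0)|=\sqrt{1-\tfrac{1}{4}|s_0|^2}$. The outer function $A/D$ has boundary modulus $\sqrt{1-\tfrac{1}{4}|s|^2}$ on $\mathbb{T}$, and since $\psi$ maps $\mathbb{T}$ to $\mathbb{T}$ this agrees with $|(A_0/D_0)\circ\psi|$; moreover $(A_0/D_0)\circ\psi$ is a rational function with no zeros in $\mathbb{D}$ and no poles in $\overline{\mathbb{D}}$ (because $A_0/D_0$ enjoys these properties and $\psi(\overline{\mathbb{D}})\subset\overline{\mathbb{D}}$), hence is outer; the uniqueness of outer factors up to a unimodular constant forces $A/D=\omega(A_0/D_0)\circ\psi$ for some $\omega\in\mathbb{T}$, and evaluating at $\lambda_0$ and invoking the extremal identity for $h_0$ closes the claim. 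Finally, set $a(\lambda):=c\lambda\tilde B(\lambda)A(\lambda)/D(\lambda)$ where $|c|=1$ and $\tilde B$ is a single Blaschke factor chosen so that $|\tilde B(\lambda_0)|=|a_0|/(|\lambda_0|\sqrt{1-\tfrac{1}{4}|s_0|^2})\le 1$ (allowed by (iii).2) and $c$ matches the required phase; Theorem \ref{prop446} then certifies that $x:=(a,s,p)$ is a rational $\overline{\mathcal{P}}$-inner function with $x(0)=(0,0,0)$ and $x(\lambda_0)=(a_0,s_0,p_0)$.

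The hard part is the strict-inequality case $r<|\lambda_0|$ of (iii) $\Rightarrow$ (i): in particular, justifying the outer-function identity $|A(\lambda_0)/D(\lambda_0)|=\sqrt{1-\tfrac{1}{4}|s_0|^2}$ after composing with $\psi$, and handling the degenerate corner $(s_0,p_0)=(0,0)$ (i.e.\ $r=0$, $\hat\lambda_0=0$), where $\psi$ must vanish at both $0$ and $\lambda_0$, $h$ collapses to $(0,\text{a degree-}2\text{ Blaschke product})$, the outer factor $A/D$ becomes a unimodular constant, and the construction reduces to the elementary task of choosing $\tilde B$ with $|\tilde B(\lambda_0)|=|a_0|/|\lambda_0|$.
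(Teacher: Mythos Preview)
Your proof is correct but differs from the paper's in two places, and the differences are worth noting.

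For (i) $\Rightarrow$ (ii), you use a vector-valued Schwarz lemma on $H(\lambda)=(a(\lambda),\tfrac12 s(\lambda))$, exploiting only the boundary identity $|a|^2+\tfrac14|s|^2=1$ on $b\overline{\mathcal{P}}$. This is slick and self-contained, and it actually yields the sharper bound $|a_0|^2+\tfrac14|s_0|^2\le|\lambda_0|^2$. The paper instead proves (i) $\Rightarrow$ (iii) directly via the structure theorem (Theorem~\ref{descripration}): it writes $a=cB\,A/D$ with $A,D$ outer, observes that $f=cB=a\cdot(D/A)$ is analytic into $\overline{\Bbb D}$ with $f(0)=0$, and applies the scalar Schwarz lemma to $f$. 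Your route avoids invoking the polynomial representation altogether.

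For (iii) $\Rightarrow$ (i), the paper's argument is shorter and structurally different: it invokes Theorem~\ref{SchwarzlemforGammathm1.1} to get an analytic $h_1:\Bbb D\to\Gamma$ interpolating $(0,0)$ and $(s_0,p_0)$, and then appeals to Costara's theorem \cite[Theorem 4]{costara2005} to replace $h_1$ by a rational $\Gamma$-inner $h$ with the same interpolation data, after which Theorem~\ref{prop446} finishes the job. You instead manufacture the rational $\Gamma$-inner function by hand, reducing to the equality case of Theorem~\ref{thmtr2911} at an auxiliary point $\hat\lambda_0$ and precomposing with a Blaschke product $\psi$; the outer-factor argument ($A/D=\omega\,(A_0/D_0)\circ\psi$) is a nice touch that makes this work. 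Your approach is more explicit and avoids the external citation to Costara, at the cost of a case analysis (including the degenerate $r=0$ corner you correctly flag). Both routes ultimately lean on the same identity $\big|\tfrac{A}{D}(\lambda_0)\big|=\sqrt{1-\tfrac14|s_0|^2}$ established in the proof of Theorem~\ref{thmtr2911}. One small citation slip: the polynomials $E_0,D_0,A_0$ for a $\Gamma$-inner function come from \cite[Proposition~2.2]{ALY18} together with Fej\'er--Riesz (as in Theorem~\ref{prop446}), not from Theorem~\ref{descripration}, which is stated for $\overline{\mathcal P}$-inner functions.
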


\begin{proof}  We shall prove below that (i) $ \Longleftrightarrow$  (iii), from which it will follow trivially that  (i) $\Rightarrow$ (ii).\\

(ii) $\Rightarrow$ (i)
Suppose (ii) holds, that is,  there exists an analytic function $x_1=(a', s', p')$,  $x_1 : \Bbb{D} \rightarrow \overline{\mathcal{P}}$ such that $x_1(0) = (0, 0, 0)$ and $x_1(\lambda_0) = (a_0, s_0, p_0)$. By Lemma \ref{pentagammainner}, $h_1=(s',p'): \Bbb{D} \rightarrow \Gamma$ is an analytic function such that $h_1(0) = (0, 0)$ and $h_1(\lambda_0) = (s_0, p_0)$. 

By \cite[Theorem 4]{costara2005} (see also \cite[Theorem 8.1]{ALY13-2}), there exists a rational $\Gamma$-inner function $h: \Bbb{D} \rightarrow \Gamma$ satisfying $h(0) = (0, 0)$ and $h(\lambda_0) = (s_0, p_0)$.
Let $E$ and $D$ be polynomials as in equations \eqref{E-D-s-p} (see  \cite[Proposition 2.2]{ALY18}) with
$h= \left(\dfrac{E}{D}, \dfrac{D^{\sim n}}{D}\right)$ on $\Bbb{D}$, where $n = \deg h$.
By Theorem \ref{prop446}, we can construct a rational $\overline{\mathcal{P}}$-inner function 
$$x = \left(a,\dfrac{E}{D}, \dfrac{D^{\sim n}}{D}\right)  =\left(cB \dfrac{A}{D},\dfrac{E}{D}, \dfrac{D^{\sim n}}{D}\right), $$
for any finite Blaschke product $B$ and $c \in \C$ with $ |c| = 1,$
where $A$ is an outer polynomial  such that
$$|A(\lambda)|^2 = |D(\lambda)|^2-\frac{1}{4}|E(\lambda)|^2, \;\; \text{for} \;\;\lambda \in \Bbb{T}.$$ 
Hence, for $\lambda \in \Bbb{D}$,
$$\frac{|A(\lambda)|^2}{|D(\lambda)|^2} = 1-\frac{1}{4}|s(\lambda)|^2.$$
Thus we get $\big|\dfrac{A}{D}(\lambda_0) \big| =  \sqrt{1-\frac{1}{4}|s_0|^2}$.
By assumption, $|a_0| \le |\lambda_0| \sqrt{1-\frac{1}{4}|s_0|^2}$. As in Theorem \ref{thmtr2911}, to satisfy conditions
$x(0) = (0, 0, 0)$ and $x(\lambda_0) = (a_0, s_0, p_0)$,
we define a function $a$ the following way:

{\rm (a)} if $|a_0| = |\lambda_0| \sqrt{1-\frac{1}{4}|s_0|^2}$, then, for  $\lambda \in \Bbb{D}$, 
$$a(\lambda) = \gamma \lambda \frac{A(\lambda)}{D(\lambda)},$$
where  $ |\gamma|=1$ is such that   $\;\gamma \lambda_0 \sqrt{1-\frac{1}{4}|s_0|^2} = a_0$.\\

{\rm (b)} if $|a_0| < |\lambda_0| \sqrt{1-\frac{1}{4}|s_0|^2}$, then
$$a(\lambda) = \lambda \left(\frac{\lambda - \lambda_0+\mu_0(1-\overline{\lambda_0}\lambda)}{1-\overline{\lambda_0}\lambda+\overline{\mu_0}(\lambda-\lambda_0)} \right) \frac{A(\lambda)}{D(\lambda)},$$
where $\mu_0 = \dfrac{a_0}{\lambda_0 \sqrt{1-\frac{1}{4}|s_0|^2}}$.\\
Therefore condition (i) holds.

(iii) $\Rightarrow$ (i) 
By Theorem \ref{SchwarzlemforGammathm1.1}, since condition (iii) holds, there exists an analytic function  $h_1: \Bbb{D} \rightarrow \Gamma$ such that  $h_1(0) = (0, 0)$ and $h_1(\lambda_0) = ( s_0, p_0)$.
By \cite[Theorem 4]{costara2005} (see also \cite[Theorem 8.1]{ALY13-2}), there exists a rational $\Gamma$-inner function $h=(s, p): \Bbb{D} \rightarrow \Gamma$ satisfying $h(0) = (0, 0)$ and $h(\lambda_0) = (s_0, p_0)$.
Let $E$ and $D$ be polynomials as in equations \eqref{E-D-s-p} (see  \cite[Proposition 2.2]{ALY18}) with
$h= \left(\dfrac{E}{D}, \dfrac{D^{\sim n}}{D}\right)$ on $\Bbb{D}$, where $n = \deg h$.
By Theorem \ref{prop446}, we can construct a rational $\overline{\mathcal{P}}$-inner function 
$$x = \left(a,\dfrac{E}{D}, \dfrac{D^{\sim n}}{D}\right)  =\left(cB \dfrac{A}{D},\dfrac{E}{D}, \dfrac{D^{\sim n}}{D}\right), $$
for any finite Blaschke product $B$ and $c \in \C$ with $ |c| = 1,$
where $A$ is an outer polynomial  such that
$$|A(\lambda)|^2 = |D(\lambda)|^2-\frac{1}{4}|E(\lambda)|^2.$$ 
To satisfy the conditions
$x(0) = (0, 0, 0)$ and $x(\lambda_0) = (a_0, s_0, p_0)$,
we define a function $a$ as in Part  (ii) $\Rightarrow$ (i).

Note that in the case when
\[
\dfrac{2|s_0 - p_0\overline{s}_0| + |s_{0}^{2} - 4p_0|}{4 - |s_0|^2} = |\lambda_0|,\;\;|s_0| <2,
\]
and 
\[
|a_0| \; \leq \; |\lambda_0| \sqrt{1-\frac{1}{4}|s_0|^2}
\]
Theorem \ref{thmtr2911} gives the construction of an interpolating  rational $\overline{\mathcal{P}}$-inner function  $x=(a, s, p)$,  $x : \Bbb{D} \rightarrow \overline{\mathcal{P}}$ such that $x(0) = (0, 0, 0)$ and $x(\lambda_0) = (a_0, s_0, p_0)$.\\

(i) $\Rightarrow$ (iii)
Suppose (i) holds, that is,  there exists a rational $\overline{\mathcal{P}}$-inner function $x=(a, s, p)$, 
 $x : \Bbb{D} \rightarrow \overline{\mathcal{P}}$ such that $x(0) = (0, 0, 0)$ and $x(\lambda_0) = (a_0, s_0, p_0)$. Let $\deg x = (m, n)$.

 By Lemma \ref{pentagammainner}, $h=(s,p)$ is a rational $\Gamma$-inner function of degree $n$. Note that $h(0) = (0,0)$ and 
 $h(\lambda_0) = (s_0, p_0)$. By Theorem \ref{SchwarzlemforGammathm1.1}, 
 $|s_0| \; \textless \; 2$ and
$$\dfrac{2|s_0 - p_0\overline{s_0}| + |s_{0}^{2} - 4p_0|}{4 - |s_0|^2} \leq |\lambda_0|.$$

 By Theorem \ref{descripration},  there exist polynomials $A, E, D$ such that
$E^{\sim n} = E$, $D(\lambda) \neq 0$ on $\overline{\Bbb{D}}$, $A$ is an outer polynomial such that $|A(\lambda)|^2 = |D(\lambda)|^2 - \frac{1}{4}|E(\lambda)|^2$ on $\Bbb{T}$, $|E(\lambda)| \leq 2|D(\lambda)|$ on $\overline{\Bbb{D}}$ and 
  $$x=\left(c B \dfrac{A}{D}, \dfrac{E}{D}, \dfrac{D^{\sim n}}{D}\right) \;\; \text{on} \;\;\overline{\Bbb{D}}$$
 for some finite Blaschke product $B$ and $|c|=1$. The function
 $$\lambda \mapsto  a(\lambda) = c B (\lambda)\dfrac{A}{D}(\lambda) $$
 is an analytic map from $\Bbb{D}$ to $\overline{\Bbb{D}}$ such that $a(0) = 0$ and $a(\lambda_0) = a_0$.
 Note that  $A$ and $D$ are outer polynomials on $\overline{\Bbb{D}}$,  and so
 $$ f(\lambda) =\frac{a(\lambda)}{\left(\dfrac{A}{D}(\lambda) \right)}= c B (\lambda)$$
 is an analytic map from $\Bbb{D}$ to $\overline{\Bbb{D}}$ such that $f(0) = 0$. 
 By the classical Schwarz lemma we have
 $$ |f(\lambda)| =  \left| \frac{a(\lambda)}{\dfrac{A}{D}(\lambda)} \right| \leq | \lambda| \;\; \text{for}\;\;\lambda \in \Bbb{D}.$$
 Since 
 $$ \big|\dfrac{A}{D}(\lambda) \big|^2= 1- |s (\lambda)|^2\;\; \text{for} \;\; \lambda \in \Bbb{D},$$
 $$ |f(\lambda)| =  \left|\frac{a(\lambda)}{\sqrt{1-\frac{1}{4}|s (\lambda)|^2}} \right|
 \;\; \text{for} \;\; \lambda \in \Bbb{D}.$$
 Thus
 $$ |a_0| = |a(\lambda_0)| =|f(\lambda_0)| \sqrt{1-\frac{1}{4}|s (\lambda_0)|^2} 
 \leq | \lambda_0| \sqrt{1-\frac{1}{4}|s_0|^2}.$$
 Therefore condition (iii) holds.
\end{proof}

\vspace*{0.5cm}

{\bf Acknowledgments}. The first author  was supported to do PhD study in Newcastle University  by the Government of Saudi Arabia and by King Khalid University, Saudi Arabia.
The second author was partially supported by the UK Engineering and Physical Sciences Research Council grants  EP/N03242X/1.\\

{\bf Statements and Declarations}. 
This paper is based on the first-named author's Ph.D. thesis. Nujood Alshehri has presented her results online to several workshops, the first presentation was to the Young Researchers in Mathematics conference in June 2021, see 
https://sites.google.com/view/yrm-2021/schedule for the abstracts. Results similar to our Theorem \ref{descripration} were  announced on ArXiv in \cite{AJPK} in March, 2022.\\

{\bf Conflict of Interest}.
We are not aware of any conflict of interest with any party.\\

\bibliography{references}

Nujood M. Alshehri,
School of Mathematics, Statistics and Physics, Newcastle University, Newcastle upon Tyne
 NE\textup{1} \textup{7}RU, U.K.;\\
 e-mail: N.M.J.Alshehri1@newcastle.ac.uk
\\

Zinaida A. Lykova,
School of Mathematics, Statistics and Physics, Newcastle University, Newcastle upon Tyne
 NE\textup{1} \textup{7}RU, U.K.;\\
 e-mail: Zinaida.Lykova@newcastle.ac.uk\\

\end{document}